\documentclass[12pt,draftcls,onecolumn]{IEEEtran}


\def\bsa{{\boldsymbol{a}}}
\def\bsb{{\boldsymbol{b}}}

\def\bse{{\boldsymbol{e}}}

\def\bsg{{\boldsymbol{g}}}

\def\bsq{{\boldsymbol{q}}}

\def\bsv{{\boldsymbol{v}}}

\def\bsx{{\boldsymbol{x}}}

\def\bsH{{\boldsymbol{H}}}

\def\bsK{{\boldsymbol{K}}}
\def\bsL{{\boldsymbol{L}}}

\def\bsM{{\boldsymbol{M}}}

\def\bsP{{\boldsymbol{P}}}

\usepackage{amssymb}
\usepackage{algorithm,algorithmic}

\usepackage{tikz}
\usepackage{tkz-berge}
\usepackage{lipsum}
\usetikzlibrary{calc}
\usetikzlibrary{shapes,fit}
\usepackage{verbatim}
\usepackage{setspace}
\usepackage{multirow}
\usetikzlibrary{shapes,arrows}
\usepackage[short]{optidef}

\usepackage{commath}
\usepackage{amsthm}

\usepackage{color}
\definecolor{gray}{RGB}{128,128,128}

\usepackage{cite}
\usepackage{graphicx}
\graphicspath{{figure/}}
\usepackage{epstopdf}
\usepackage{tikz}
\usetikzlibrary{arrows,shapes,backgrounds}

\usepackage{bm}

\usepackage{pifont}

\usepackage{array}
\newcolumntype{M}[1]{>{\centering\arraybackslash}m{#1}}
\newcolumntype{N}{@{}m{0pt}@{}}

\usepackage{multirow}

\usepackage{stfloats}
\usepackage{caption}
\usepackage{subcaption}

\newtheorem{theorem}{Theorem}
\newtheorem{assumption}{Assumption}

\newtheorem{lemma}{Lemma}

\newtheorem{definition}{Definition}

\DeclareMathOperator{\sign}{sign}
\DeclareMathOperator{\nullrank}{null}

\DeclareMathOperator{\col}{col}

\DeclareMathOperator{\diag}{diag}
\DeclareMathOperator{\Deg}{Deg}

\DeclareMathOperator*{\argmin}{arg\, min}

\newenvironment{proof}[1][Proof]%
  {\smallskip\par\noindent\textbf{#1\,:\ }}%
  {\hspace*{\fill} \rule{6pt}{6pt}\smallskip}
\newenvironment{proof*}[1][Proof]%
  {\smallskip\par\noindent\textbf{#1\,:\ }}%

\newlength{\fwidth}\setlength{\fwidth}{0.485\textwidth}%

\allowdisplaybreaks
\begin{document}
\IEEEoverridecommandlockouts
\title{ \bf Communication Compression for\\ Distributed Nonconvex Optimization}
\author{Xinlei Yi, Shengjun Zhang, Tao Yang, Tianyou Chai, and Karl H. Johansson
\thanks{This work was supported by the
	Knut and Alice Wallenberg Foundation, the  Swedish Foundation for Strategic Research, the Swedish Research Council, the National Natural Science Foundation of China under grants 62133003, 61991403, 61991404, and 61991400, and the 2020 Science and Technology Major Project
	of Liaoning Province under grant 2020JH1/10100008.}
\thanks{X. Yi and K. H. Johansson are with the Division of Decision and Control Systems, School of Electrical Engineering and Computer Science, KTH Royal Institute of Technology, and they are also affiliated with Digital Futures, 100 44, Stockholm, Sweden. {\tt\small \{xinleiy, kallej\}@kth.se}.}%
\thanks{S. Zhang is with the Department of Electrical Engineering, University of North Texas, Denton, TX 76203 USA. {\tt\small  ShengjunZhang@my.unt.edu}.}
\thanks{T. Yang and T. Chai are with the State Key Laboratory of Synthetical Automation for Process Industries, Northeastern University, 110819, Shenyang, China. {\tt\small \{yangtao,tychai\}@mail.neu.edu.cn}.}
}

\maketitle

\begin{abstract}                
	This paper considers distributed nonconvex optimization with the cost functions being distributed over agents. Noting that information compression is a key tool to reduce the heavy communication load for distributed algorithms as agents iteratively communicate with neighbors, we propose three distributed primal--dual algorithms with compressed communication. The first two algorithms are applicable to a general class of compressors with bounded relative compression error and the third algorithm is suitable for two general classes of  compressors with bounded absolute compression error. We show that the proposed distributed algorithms with compressed communication have comparable convergence properties as state-of-the-art algorithms with exact communication. Specifically, we show that they can find first-order stationary points with sublinear convergence rate $\mathcal{O}(1/T)$ when each local cost function is smooth, where $T$ is the total number of iterations, and find global optima with linear convergence rate under an additional condition that the global cost function satisfies the Polyak--{\L}ojasiewicz condition. Numerical simulations are provided to illustrate the effectiveness of the theoretical results.
	
	\emph{Index Terms}---Communication compression, distributed optimization, linear convergence,  nonconvex optimization, Polyak--{\L}ojasiewicz condition
\end{abstract}



\section{Introduction}

We consider distributed nonconvex optimization. Specifically, consider a network of $n$ agents, each of which has a private local (possibly nonconvex) cost function $f_i: \mathbb{R}^{d}\mapsto \mathbb{R}$.
The whole network aims to solve the following optimization problem
\begin{align}\label{nonconvex:eqn:xopt}
	\min_{x\in \mathbb{R}^d} f(x):=\frac{1}{n}\sum_{i=1}^nf_i(x).
\end{align}
Throughout this paper we assume each $f_i$ is smooth.
Note that each agent alone cannot solve the above optimization problem since it does not know other agents' local cost functions. 
Therefore, agents need to communicate with each other through an underlying communication network. Distributed nonconvex optimization has  wide applications, such as power allocation in wireless adhoc networks \cite{bianchi2012convergence}, distributed clustering \cite{forero2011distributed},  dictionary learning \cite{wai2015consensus}, and empirical risk minimization \cite{bottou2018optimization}.

The problem \eqref{nonconvex:eqn:xopt} has been extensively studied in the literature, e.g., \cite{hong2017prox,daneshmand2018second,hajinezhad2019perturbed,sun2019distributed,yi2021linear,
	chang2020distributed,xin2021improved},  just to name a few. Due to nonconvexity, these studies typically showed that first-order stationary points can be found at a sublinear convergence rate. For example, \cite{hong2017prox,daneshmand2018second,hajinezhad2019perturbed,sun2019distributed,yi2021linear} showed that first-order stationary points can be found with an $\mathcal{O}(1/T)$ convergence rate, where $T$ is the total number of iterations. However, the algorithms proposed in these studies require significant amount of data exchange as agents iteratively communicate with neighbors. Noting that communication bandwidth and power
are limited, it is vital to propose communication-efficient distributed algorithms. In this paper, we propose distributed algorithms with compressed communication to improve communication efficiency.

\subsection{Related Works and Motivation}
It is straightforward to combine existing distributed algorithms and communication compression directly. However, such a simple strategy does not converge to the accurate solution due to the compression error, and even may lead to divergence as the compression error would accumulate. Examples have been provided in \cite{lu2020moniqua,beznosikov2020biased} to illustrate this. Therefore, communication compression in distributed algorithms has gained considerable attention recently.

When each local cost function is convex, various distributed algorithms with compressed communication have been proposed. For example, 
\cite{alistarh2017qsgd,horvath2019stochastic} used unbiased compressors with bounded relative compression error to design distributed stochastic gradient descent (SGD) algorithms; \cite{Koloskova2019decentralized} employed biased but contractive compressors  to design a distributed SGD algorithm;  \cite{reisizadeh2019exact} and \cite{liu2021linear,kovalev2021linearly} utilized unbiased  compressors to respectively design distributed gradient descent and primal--dual algorithms; \cite{nedic2008distributed} and \cite{zhu2016quantized} made use of the standard uniform quantizer to respectively design distributed subgradient methods and alternating direction method of multipliers approaches; \cite{yuan2012distributed,doan2020convergence} and \cite{doan2020fast} respectively adopted the unbiased random quantization and the adaptive quantization to design distributed projected subgradient algorithms; \cite{yi2014quantized} and
\cite{lee2018finite,magnusson2020maintaining,kajiyama2020linear,xiong2021quantized} exploited the standard uniform quantizer with dynamic quantization level to respectively design distributed subgradient and primal--dual algorithms; and \cite{lei2020distributed} applied the standard uniform quantizer with a fixed quantization level to design a distributed gradient descent algorithm. The compressors mentioned above can be unified into three general classes. Specifically,  \cite{li2021compressed} proposed a wider class of compressors with bounded relative compression error which covers the compressors used in \cite{alistarh2017qsgd,horvath2019stochastic,Koloskova2019decentralized,
	reisizadeh2019exact,liu2021linear,kovalev2021linearly}; \cite{khirirat2020compressed} considered a general class of compressors with globally bounded absolute compression error which accommodates the compressors used in \cite{yuan2012distributed,doan2020convergence,doan2020fast}; and \cite{zhang2021innovation} studied a general class of compressors with locally bounded absolute compression error which contains the compressors used in \cite{yi2014quantized,lee2018finite,magnusson2020maintaining,kajiyama2020linear,
	xiong2021quantized,lei2020distributed}.
These studies also analyzed the convergence properties of the proposed algorithms. Especially, some of them showed that the achieved convergence rates under compressed communication are comparable to and even match those under exact communication. For instance, linear convergence was achieved in \cite{liu2021linear,kovalev2021linearly,lee2018finite,magnusson2020maintaining,kajiyama2020linear,
	xiong2021quantized,li2021compressed,zhang2021innovation} under the standard strong convexity assumption.

While various algorithms with compressed communication have been designed for distributed convex optimization,  communication compression for distributed nonconvex optimization is relatively less studied because the analysis is more challenging due to the nonconvexity. Moreover, when considering distributed nonconvex optimization,  most of existing distributed algorithms with compressed communication are SGD algorithms although different types of compressors have been used. For instance, \cite{lu2020moniqua} used the \underline{mo}dular arithmetic for commu\underline{ni}cation \underline{qua}ntization (Moniqua); \cite{tang2018communication}  used unbiased compressors with bounded relative or absolute compression error; \cite{koloskova2020decentralized,taheri2020quantized,singh2020sparq,singh2021squarm} used biased but contractive compressors; \cite{reisizadeh2019robust} used unbiased compressors with bounded absolute compression error. These studies also analyzed the convergence properties of the proposed algorithms. For instance, \cite{lu2020moniqua,tang2018communication,koloskova2020decentralized,taheri2020quantized,singh2020sparq,singh2021squarm} showed that the proposed SGD algorithms with compressed communication achieve linear speedup convergence rate $\mathcal{O}(1/\sqrt{nT})$, which is the same as that achieved by distributed SGD algorithms with exact communication. 
Observing this, one core theoretical question arises.

(Q1) Under compressed communication, can first-order stationary points be found with the well-known $\mathcal{O}(1/T)$ convergence rate?

On the other hand, noting that it has been shown in \cite{karimi2016linear,yi2021linear,tang2019distributed} that global optima of nonconvex optimization can be linearly found if the global cost function satisfies the Polyak--{\L}ojasiewicz (P--{\L}) condition, another core theoretical question arises.

(Q2) Under compressed communication, can global optima be linearly found when the global cost function satisfies the P--{\L} condition?

\subsection{Main Contributions}
In this paper, we provide positive answers to the above questions. 
More specifically, the contributions of this paper are summarized as follows.

(C1) We first use a general class of compressors with bounded relative compression error, which incorporates various commonly used compressors including unbiased compressors and biased but contractive compressors,
to design a communication-efficient distributed algorithm (Algorithm~\ref{nonconvex:algorithm-pdgd}). This algorithm only requires each agent to communicate one compressed variable with its neighbors per iteration. We show that this compressed communication algorithm has comparable convergence properties as state-of-the-art algorithms with exact communication. Specifically, we show in Theorem~\ref{nonconvex:thm-sm} that it can find a first-order stationary point with the well-known $\mathcal{O}(1/T)$ convergence rate, thus (Q1) is answered. Moreover, if the global cost function satisfies the P--{\L} condition, we show in Theorem~\ref{nonconvex:thm-ft} that it can find a global optimum with linear convergence rate, thus (Q2) is answered.

(C2) We then propose an error feedback based compressed communication algorithm (Algorithm~\ref{nonconvex:algorithm-pdgd_ef}) for biased compressors particularly. This algorithm can correct the bias induced by biased compressors under the cost that it requires each agent to communicate two compressed variables with its neighbors per iteration. We show in Theorems~\ref{nonconvex:thm-sm_ef} and \ref{nonconvex:thm-ft_ef} that this algorithm has similar convergence properties as the first algorithm, which  respectively answer (Q1) and (Q2).

(C3) We finally use two general classes of compressors with globally and locally bounded absolute compression error, which cover various commonly used compressors including unbiased compressors with bounded variance, random/adaptive/ uniform quantization, and even $1$-bit binary quantizer, to design a communication-efficient distributed algorithm (Algorithm~\ref{nonconvex:algorithm-pdgd_determin}). This algorithm also only requires each agent to communicate one compressed variable with its neighbors per iteration. When the compressors have globally bounded absolute compression error, we show in Theorems~\ref{nonconvex:thm-sm_quantization} and \ref{nonconvex:thm-ft_quantization} that this algorithm has similar convergence properties as the first algorithm, which respectively answer (Q1) and (Q2). When the compressors have locally bounded absolute compression error, we show in Theorem~\ref{nonconvex:thm-ft_determin} that this algorithm can find a global optimum with linear convergence rate if the global cost function satisfies the P--{\L} condition and the corresponding  P--{\L} constant is known a priori, which answers (Q2).

In summary, the main contribution of this paper is to propose three distributed primal--dual algorithms with compressed communication for distributed nonconvex optimization, which have comparable convergence properties as state-of-the-art algorithms with exact communication. This is a significant theoretical development and to the best of our knowledge, it is the first time to achieve this. 

\subsection{Outline}
The rest of this paper is organized as follows. Section~\ref{nonconvex:sec-preliminary} introduces some preliminaries. Section~\ref{nonconvex:sec-proformu} presents the problem formulation. Sections~\ref{nonconvex:sec-main-dc}--\ref{nonconvex:sec-main-dc_determin} provide three communication-efficient distributed algorithms and analyze their convergence properties. Section~\ref{nonconvex:sec-simulation} gives numerical simulations. Finally, Section~\ref{nonconvex:sec-conclusion} concludes this paper. 

\noindent {\bf Notations}: $\mathbb{N}_0$ denotes the set of nonnegative integers.  $[n]$ denotes the set $\{1,\dots,n\}$ for any positive constant integer $n$.  $\|\cdot\|_p$ represents the $p$-norm for
vectors or the induced $p$-norm for matrices, and the subscript is omitted when $p=2$.
Given a differentiable function $f$, $\nabla f$ denotes its gradient.
${\bf 1}_n$ (${\bf 0}_n$) denotes the column one (zero) vector of dimension $n$.  ${\bf I}_n$ is the $n$-dimensional identity matrix. $\col(z_1,\dots,z_k)$ is the concatenated column vector of vectors $z_i\in\mathbb{R}^{d_i},~i\in[k]$. Given a vector $[x_1,\dots,x_n]^\top\in\mathbb{R}^n$, $\diag([x_1,\dots,x_n])$ is a diagonal matrix with the $i$-th diagonal element being $x_i$. The notation $A\otimes B$ denotes the Kronecker product of matrices $A$ and $B$. Given two symmetric matrices $M,N$, $M\ge N$ means that $M-N$ is positive semi-definite. $\nullrank(A)$ is the null space of matrix $A$. $\rho(\cdot)$ stands for the spectral radius for matrices and $\rho_2(\cdot)$ indicates the minimum positive eigenvalue for matrices having positive eigenvalues. For any square matrix $A$, denote $\|x\|_A^2$=$x^\top Ax$.

\section{Preliminaries}\label{nonconvex:sec-preliminary}
In this section, we briefly introduce algebraic graph theory and the P--{\L} condition.

\subsection{Algebraic Graph Theory}

Let $\mathcal G=(\mathcal V,\mathcal E, A)$ denote a weighted undirected graph with the set of vertices (nodes) $\mathcal V =[n]$, the set of links (edges) $\mathcal E
\subseteq \mathcal V \times \mathcal V$, and the weighted adjacency matrix
$A =A^{\top}=(a_{ij})$ with nonnegative elements $a_{ij}$. A link of $\mathcal G$ is denoted by $(i,j)\in \mathcal E$ if $a_{ij}>0$, i.e., if vertices $i$ and $j$ can communicate with each other. It is assumed that $a_{ii}=0$ for all $i\in [n]$. Let $\mathcal{N}_i=\{j\in [n]:~ a_{ij}>0\}$ and $\deg_i=\sum\limits_{j=1}^{n}a_{ij}$ denote the neighbor set and weighted degree of vertex $i$, respectively. The degree matrix of graph $\mathcal G$ is $\Deg=\diag([\deg_1, \cdots, \deg_n])$. The Laplacian matrix is $L=(L_{ij})=\Deg-A$. A  path of length $k$ between vertices $i$ and $j$ is a subgraph with distinct vertices $i_0=i,\dots,i_k=j\in [n]$ and edges $(i_j,i_{j+1})\in\mathcal E,~j=0,\dots,k-1$.
An undirected graph is  connected if there exists at least one path between any two distinct vertices.

\subsection{Polyak--{\L}ojasiewicz Condition}
Let $f(x):~\mathbb{R}^d\mapsto\mathbb{R}$ be a differentiable function. Let $\mathbb{X}^*=\argmin_{x\in\mathbb{R}^p}f(x)$ and $f^*=\min_{x\in\mathbb{R}^d}f(x)$. Moreover, we assume that $f^*>-\infty$.
\begin{definition} 
	The function $f$ satisfies the Polyak--{\L}ojasiewicz (P--{\L}) condition with constant  $\nu>0$ if
	\begin{align}
		\frac{1}{2}\|\nabla f(x)\|^2\ge \nu( f(x)-f^*),~\forall x\in \mathbb{R}^d.\label{nonconvex:equ:plc}
	\end{align}
\end{definition}
It is straightforward to see that every (essentially or weakly) strongly convex function satisfies the P--{\L} condition.
The P--{\L} condition implies that every stationary point is a global minimizer. But unlike the (essentially or weakly) strong convexity, the P--{\L} condition alone does not imply convexity of $f$. Moreover, it does not imply that the global minimizer is unique either. In fact, P--{\L} condition generalizes strong convexity to nonconvex functions. The function $f(x)=x^2+3\sin^2(x)$ given in \cite{karimi2016linear} is an example of a nonconvex function satisfying the P--{\L} condition with $\nu=1/32$. Moreover, it was shown in \cite{li2018simple_nips} that the loss functions in some applications satisfy the P--{\L} condition in the local region near a local minimum. Moreover, \cite{fazel2018global} proved that the cost function of the policy optimization for the linear quadratic regulator problem is nonconvex and satisfies  the P--{\L} condition. 

\section{Problem Formulation}\label{nonconvex:sec-proformu}
In this section, we introduce three general classes of compressors and provide the assumptions on the communication network and cost functions.

\subsection{Compressors}
To improve communication efficiency, we consider the scenario that the communication between agents is compressed. Specifically, we consider a class of compressors with bounded relative compression error, and two classes of compressors respectively with globally and locally bounded absolute compression error satisfying the following assumptions.
\begin{assumption}\label{nonconvex:ass:compression}
	The compressor $\mathcal{C}:\mathbb{R}^d\mapsto\mathbb{R}^d$ satisfies
	\begin{align}\label{nonconvex:ass:compression_equ_scaling}
		\mathbf{E}_{\mathcal{C}}\Big[\Big\|\frac{\mathcal{C}(x)}{r}-x\Big\|^2\Big]\le (1-\varphi)\|x\|^2,~\forall x\in\mathbb{R}^d,
	\end{align}
	for some constants $\varphi\in(0,1]$ and $r>0$. Here $\mathbf{E}_{\mathcal{C}}[\cdot]$ denotes the expectation over the internal randomness of the stochastic compression operator $\mathcal{C}$.
\end{assumption}

From \eqref{nonconvex:ass:compression_equ_scaling}, we have
\begin{align}\label{nonconvex:ass:compression_equ}
	\mathbf{E}_{\mathcal{C}}[\|\mathcal{C}(x)-x\|^2]
	&=\mathbf{E}_{\mathcal{C}}\Big[\Big\|r\Big(\frac{\mathcal{C}(x)}{r}-x\Big)+(r-1)x\Big\|^2\Big]\nonumber\\
	&\le2r^2\mathbf{E}_{\mathcal{C}}\Big[\Big\|\frac{\mathcal{C}(x)}{r}-x\Big\|^2\Big]
	+2(1-r)^2\|x\|^2\nonumber\\
	&\le r_0\|x\|^2,~\forall x\in\mathbb{R}^d,
\end{align}
where $r_0=2r^2(1-\varphi)+2(1-r)^2$.
Therefore, the class of compressors satisfying Assumption~\ref{nonconvex:ass:compression} is the same as that used in \cite{li2021compressed}.
As explained in \cite{li2021compressed}, the class of compressors satisfying Assumption~\ref{nonconvex:ass:compression} is broad. It incorporates all the  unbiased  compressors with bounded relative compression error\footnote{A compressor $\mathcal{C}:\mathbb{R}^d\mapsto\mathbb{R}^d$ is  unbiased with bounded relative compression error (or just unbiased for simplicity) if for all $x\in\mathbb{R}^d$, $\mathbf{E}_{\mathcal{C}}[\mathcal{C}(x)]=x$ and there exists a constant $C\ge0$ such that $\mathbf{E}_{\mathcal{C}}[\|\mathcal{C}(x)-x\|^2]\le C\|x\|^2$.} and biased but contractive compressors\footnote{A compressor $\mathcal{C}:\mathbb{R}^d\mapsto\mathbb{R}^d$ is contractive if there exists a constant $\varphi\in(0,1]$ such that $\mathbf{E}_{\mathcal{C}}[\|\mathcal{C}(x)-x\|^2]\le (1-\varphi)\|x\|^2,~\forall x\in\mathbb{R}^d$.}, such as random quantization and sparsification, which are commonly used in the literature, e.g., \cite{alistarh2017qsgd,tang2018communication,horvath2019stochastic,reisizadeh2019exact,
	Koloskova2019decentralized,singh2020sparq,koloskova2020decentralized,taheri2020quantized,
	liu2021linear,kovalev2021linearly,singh2021squarm,chen2021communication}. It also includes some biased and non-contractive compressors, such as the norm-sign compressor. Moreover, it is straightforward to check that the class of compressors satisfying Assumption~\ref{nonconvex:ass:compression} also covers the three classes of biased compressors considered in \cite{beznosikov2020biased}. In other words, Assumption~\ref{nonconvex:ass:compression} is weaker than various commonly used assumptions for compressors in the literature.

\begin{assumption}\label{nonconvex:ass:compression_quantization}
	The compressor $\mathcal{C}:\mathbb{R}^d\mapsto\mathbb{R}^d$ satisfies
	\begin{align}\label{nonconvex:ass:compression_equ_quantization}
		\mathbf{E}_{\mathcal{C}}[\|\mathcal{C}(x)-x\|_p^2]\le C,~\forall x\in\mathbb{R}^d,
	\end{align}
	for some real number $p\ge1$ and constant $C\ge0$.
\end{assumption}
The same class of compressors satisfying Assumption~\ref{nonconvex:ass:compression_quantization} has also been used in \cite{khirirat2020compressed}, which incorporates the deterministic quantization used in \cite{nedic2008distributed,zhu2016quantized,yuan2012distributed} and the unbiased random quantization used in \cite{yuan2012distributed,tang2018communication,reisizadeh2019robust}. 

\begin{assumption}\label{nonconvex:ass:compression_determin}
	The compressor $\mathcal{C}:\mathbb{R}^d\mapsto\mathbb{R}^d$ satisfies
	\begin{align}\label{nonconvex:ass:compression_equ_determin}
		\|\mathcal{C}(x)-x\|_p\le (1-\varphi),~\forall x\in\{x\in\mathbb{R}^d:~\|x\|_p\le1\},
	\end{align}
	for some real number $p\ge1$ and constant $\varphi\in(0,1]$.
\end{assumption}
The same class of compressors satisfying Assumption~\ref{nonconvex:ass:compression_determin} has also been used in \cite{zhang2021innovation}, which covers the standard uniform quantizer with dynamic and fixed quantization levels respectively used in \cite{yi2014quantized,lee2018finite,magnusson2020maintaining,kajiyama2020linear,xiong2021quantized} and \cite{lei2020distributed}, and the Moniqua used in \cite{lu2020moniqua}. Moreover, as pointed out in \cite{zhang2021innovation}, the $1$-bit binary quantizer satisfies Assumption~\ref{nonconvex:ass:compression_determin}.
The difference between Assumptions~\ref{nonconvex:ass:compression_quantization} and \ref{nonconvex:ass:compression_determin} is that the former is a global assumption while the latter is a local assumption. It should be pointed out that all Assumptions~\ref{nonconvex:ass:compression}--\ref{nonconvex:ass:compression_determin} do not require the compressors to be unbiased.
Note that the inequalities in Assumptions~\ref{nonconvex:ass:compression}--\ref{nonconvex:ass:compression_determin} are different, and no one can imply another. Therefore, the three types of compressors are different from each other, and no one type is more restrictive than or can imply another. Moreover, the intersection of each pair of the three types of compressors is non-empty. For example, as explained in the Simulations, the norm-sign compressor satisfies both Assumptions~1 and 3. Therefore, the three types of compressors are not mutually exclusive.

The above three general classes of compressors cover most of existing compressors used in machine learning and signal processing applications, which substantiate the
generality of our results later in this paper.

\subsection{Communication Network and Cost Functions}
The following assumptions for the problem \eqref{nonconvex:eqn:xopt} are made.

\begin{assumption}\label{nonconvex:ass:graph}
	The underlying communication network is modeled by an undirected and connected graph $\mathcal G$.
\end{assumption}

\begin{assumption}\label{nonconvex:ass:optset}
	The minimum function value of the optimization problem \eqref{nonconvex:eqn:xopt} is finite.
\end{assumption}

\begin{assumption}\label{nonconvex:ass:fiu}
	Each  local cost function $f_i(x)$ is smooth with constant $L_{f}>0$, i.e., it is differentiable and
	\begin{align}\label{nonconvex:smooth}
		\|\nabla f_i(x)-\nabla f_i(y)\|\le L_{f}\|x-y\|,~\forall x,y\in \mathbb{R}^d.
	\end{align}
\end{assumption}

\begin{assumption}\label{nonconvex:ass:fil} The global cost function $f(x)$ satisfies the P--{\L} condition with constant $\nu>0$.
\end{assumption}

Assumptions~\ref{nonconvex:ass:graph}--\ref{nonconvex:ass:fiu} are standard in the literature to guarantee the well-known $\mathcal{O}(1/T)$ convergence rate for distributed algorithms finding the first-order stationary points for nonconvex optimization problems.
Assumption~\ref{nonconvex:ass:fil}  is  weaker than the assumption that the global or each local cost function is strongly convex, but it still can guarantee linear convergence. Note that the convexity of the cost functions and the boundedness of their gradients are not assumed. We also make no assumptions on the boundedness of the deviation between the gradients of local cost functions. In other words, we do not assume that $\frac{1}{n}\sum_{i=1}^{n}\|\nabla f_i(x)-\nabla f(x)\|^2$ is bounded. Moreover, we do not assume that the optimal set is a singleton or finite set either.


\section{Compressed Communication Algorithm: Bounded Relative Compression Error}\label{nonconvex:sec-main-dc}
In this section, we use the compressors with bounded relative compression error to design a communication-efficient distributed algorithm and analyze the convergence properties of the proposed algorithm.

\subsection{Algorithm Description}
To solve \eqref{nonconvex:eqn:xopt}, various distributed algorithms have been proposed. For example, \cite{yi2021linear} proposed the following distributed primal--dual algorithm:
\begin{subequations}\label{nonconvex:yi-tac-alg}
	\begin{align}
		x_{i,k+1} &= x_{i,k}-\eta\Big(\alpha\sum_{j=1}^{n}L_{ij}x_{j,k}+\beta v_{i,k}+\nabla f_i(x_{i,k})\Big), \label{nonconvex:yi-tac-alg-x}\\
		v_{i,k+1} &=v_{i,k}+ \eta\beta\sum_{j=1}^{n}L_{ij}x_{j,k},  \label{nonconvex:yi-tac-alg-v}
	\end{align}
\end{subequations}
where $\alpha$, $\beta$, and $\eta$ are positive algorithm parameters, and $x_{i,k}\in\mathbb{R}^d$ is agent $i$'s estimation of the solution to the problem \eqref{nonconvex:eqn:xopt} at the $k$-th iteration. 

To implement the algorithm \eqref{nonconvex:yi-tac-alg}, at each iteration each agent $j$ needs to exactly communicate the vector-valued variable $x_{j,k}$ with its neighbors, which requires significant amount of data exchange especially when the dimension $d$ is large. However, in practice communication bandwidth and power are limited, which motivates this paper to consider communication-efficient distributed algorithms. We use communication compression to improve communication efficiency. As mentioned in the Introduction, directly combining the algorithm \eqref{nonconvex:yi-tac-alg} and communication compression, i.e., using the compressed variable $\mathcal{C}(x_{j,k})$ to replace $x_{j,k}$ in \eqref{nonconvex:yi-tac-alg}, does not work due to the compression error. To reduce the compression error, an auxiliary variable $a_{j,k}\in\mathbb{R}^d$ is introduced. The difference $x_{j,k}-a_{j,k}$ instead of $x_{j,k}$ is compressed and communicated, and then is added back to $a_{j,k}$ for replacing $x_{j,k}$ in \eqref{nonconvex:yi-tac-alg}. Then, we have the following algorithm
\begin{subequations}\label{nonconvex:kia-algo-dc-compress}
	\begin{align}
		x_{i,k+1} &= x_{i,k}-\eta\Big(\alpha\sum_{j=1}^{n}L_{ij}\hat{x}_{j,k}+\beta v_{i,k}+\nabla f_i(x_{i,k})\Big), \label{nonconvex:kia-algo-dc-x-compress}\\
		v_{i,k+1} &=v_{i,k}+ \eta\beta\sum_{j=1}^{n}L_{ij}\hat{x}_{j,k},  \label{nonconvex:kia-algo-dc-v-compress}
	\end{align}
\end{subequations}
where 
\begin{align}
	\hat{x}_{i,k}=a_{i,k}+\mathcal{C}(x_{i,k}-a_{i,k}).\label{nonconvex:kia-algo-dc-compact-xhat}
\end{align}

\begin{algorithm}[!tb]
	\caption{}
	\label{nonconvex:algorithm-pdgd}
	\begin{algorithmic}[1]
		\STATE \textbf{Input}: positive parameters $\alpha$, $\beta$, $\eta$, and $\psi$.
		\STATE \textbf{Initialize}: $ x_{i,0}\in\mathbb{R}^d$, $a_{i,0}=b_{i,0}=v_{i,0}={\bf 0}_d$, and $q_{i,0}=\mathcal{C}(x_{i,0}),~\forall i\in[n]$.
		\FOR{$k=0,1,\dots$}
		\FOR{$i=1,\dots,n$  in parallel}
		\STATE  Broadcast $q_{i,k}$ to $\mathcal{N}_i$ and receive $q_{j,k}$ from $j\in\mathcal{N}_i$.
		\STATE  Update
		\begin{subequations}\label{nonconvex:kia-algo-dc}
			\begin{align}
				a_{i,k+1}&=a_{i,k}+\psi q_{i,k}, \label{nonconvex:kia-algo-dc-a}\\
				b_{i,k+1}&=b_{i,k}+\psi \Big(q_{i,k}-\sum_{j=1}^{n}L_{ij}q_{j,k}\Big), \label{nonconvex:kia-algo-dc-b}\\
				x_{i,k+1} &= x_{i,k}-\eta\alpha\Big(a_{i,k}-b_{i,k}+\sum_{j=1}^{n}L_{ij}q_{j,k}\Big)-\eta(\beta v_{i,k}+\nabla f_i(x_{i,k})), \label{nonconvex:kia-algo-dc-x}\\
				v_{i,k+1} &=v_{i,k}+ \eta\beta\Big(a_{i,k}-b_{i,k}+\sum_{j=1}^{n}L_{ij}q_{j,k}\Big),  \label{nonconvex:kia-algo-dc-v}\\
				q_{i,k+1}&=\mathcal{C}(x_{i,k+1}-a_{i,k+1}). \label{nonconvex:kia-algo-dc-q}
			\end{align}
		\end{subequations}
		\ENDFOR
		\ENDFOR
		\STATE  \textbf{Output}: $\{x_{i,k}\}$.
	\end{algorithmic}
\end{algorithm}

Although in the algorithm \eqref{nonconvex:kia-algo-dc-compress}, the compressor error can be reduced, at each iteration each agent $j$ still needs to exactly communicate the vector-valued variable $a_{j,k}$ due to the summation term $\sum_{j=1}^{n}L_{ij}a_{j,k}$ inside \eqref{nonconvex:kia-algo-dc-compress}. Thus, the algorithm \eqref{nonconvex:kia-algo-dc-compress} does not enjoy the benefits of compression. To overcome that, another auxiliary variable $b_{j,k}\in\mathbb{R}^d$ is introduced to calculate $\sum_{j=1}^{n}L_{ij}a_{j,k}$. The proposed algorithm is presented in pseudo-code as Algorithm~\ref{nonconvex:algorithm-pdgd}, which is a communication-efficient algorithm since each agent $j$ only communicates the compressed variable $q_{j,k}$ with its neighbors.
Noting that $a_{i,0}=b_{i,0}={\bf 0}_d$, by mathematical induction, it is straightforward to check that $b_{i,k}=a_{i,k}-\sum_{j=1}^{n}L_{ij}a_{j,k},~\forall i\in[n]$. Then, \eqref{nonconvex:kia-algo-dc-x} and \eqref{nonconvex:kia-algo-dc-v} respectively can be rewritten as \eqref{nonconvex:kia-algo-dc-x-compress} and \eqref{nonconvex:kia-algo-dc-v-compress}. The same idea to use auxiliary variables to reduce the compression error and to implement communication compression has been used in the literature, e.g., \cite{liu2021linear,li2021compressed}.

To end this section, we would like to briefly explain why the compression error is reduced in Algorithm~\ref{nonconvex:algorithm-pdgd} when the class of compressors satisfying Assumption~\ref{nonconvex:ass:compression} is used. 	From \eqref{nonconvex:kia-algo-dc-compact-xhat} and \eqref{nonconvex:ass:compression_equ}, we have
\begin{align}\label{nonconvex:xminusxhat}
	\mathbf{E}_{\mathcal{C}}[\|x_{i,k}-\hat{x}_{i,k}\|^2]
	&=\mathbf{E}_{\mathcal{C}}[\|x_{i,k}-a_{i,k}-\mathcal{C}(x_{i,k}-a_{i,k})\|^2]\le r_0\mathbf{E}_{\mathcal{C}}[\|x_{i,k}-a_{i,k}\|^2].
\end{align}
From the proof of Theorem~\ref{nonconvex:thm-sm}, we know that $\sum_{i=1}^{n}\mathbf{E}_{\mathcal{C}}[\|x_{i,k}-a_{i,k}\|^2]$ converges to zero. Therefore, the compression error is reduced.


\subsection{Convergence Analysis}
In this section, we provide convergence analysis for both scenarios without and with Assumption~\ref{nonconvex:ass:fil}.
We first have the following convergence result.
\begin{theorem}\label{nonconvex:thm-sm}
	Suppose that Assumptions~\ref{nonconvex:ass:compression} and \ref{nonconvex:ass:graph}--\ref{nonconvex:ass:fiu} hold.  Let $\{x_{i,k}\}$ be the sequence generated by Algorithm~\ref{nonconvex:algorithm-pdgd} with $\alpha=\kappa_1\beta$, $\beta>\kappa_2$, $\eta\in(0,\kappa_3)$, and $\psi\in(0,1/r]$, where $\kappa_1,~\kappa_2,~\kappa_3$ are positive constants given in Appendix~\ref{nonconvex:proof-thm-sm}. Then, for any $T\in\mathbb{N}_0$,
	\begin{subequations}
		\begin{align}
			&\sum_{k=0}^{T}\sum_{i=1}^{n}\mathbf{E}_{\mathcal{C}}[\|x_{i,k}-\bar{x}_k\|^2+\|\nabla f(\bar{x}_k)\|^2]=\mathcal{O}(1),\label{nonconvex:thm-sm-equ1}\\
			&\mathbf{E}_{\mathcal{C}}[f(\bar{x}_{T})-f^*]=\mathcal{O}(1),\label{nonconvex:thm-sm-equ2}
		\end{align}
	\end{subequations}
	where $\bar{x}_k=\frac{1}{n}\sum_{i=1}^{n}x_{i,k}$.
\end{theorem}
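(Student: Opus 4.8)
The plan is to track three coupled error quantities---the consensus error, a primal--dual energy, and the compression error---and combine them with the objective value into a single Lyapunov function that descends up to exactly the quantities we wish to sum. First I would rewrite Algorithm~\ref{nonconvex:algorithm-pdgd} in stacked form: writing $\mathbf{x}_k=\col(x_{1,k},\dots,x_{n,k})$ and $\mathbf{v}_k,\mathbf{a}_k$ analogously with $\mathbf{L}=L\otimes I_d$, the induction identity $b_{i,k}=a_{i,k}-\sum_{j}L_{ij}a_{j,k}$ (already established in the text) makes \eqref{nonconvex:kia-algo-dc-x}--\eqref{nonconvex:kia-algo-dc-v} coincide with \eqref{nonconvex:kia-algo-dc-compress}, so the updates read $\mathbf{x}_{k+1}=\mathbf{x}_k-\eta(\alpha\mathbf{L}\hat{\mathbf{x}}_k+\beta\mathbf{v}_k+\nabla\mathbf{f}(\mathbf{x}_k))$ and $\mathbf{v}_{k+1}=\mathbf{v}_k+\eta\beta\mathbf{L}\hat{\mathbf{x}}_k$, with $\hat{\mathbf{x}}_k=\mathbf{a}_k+\mathcal{C}(\mathbf{x}_k-\mathbf{a}_k)$. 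I then introduce the consensus error $\mathbf{x}_k-\bar{\mathbf{x}}_k$, where $\bar{\mathbf{x}}_k=(\tfrac{1}{n}\mathbf{1}_n\mathbf{1}_n^\top\otimes I_d)\mathbf{x}_k$, and the compression error $e_{i,k}=x_{i,k}-a_{i,k}$.

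Next I would extract the average dynamics. Because $\mathbf{1}_n^\top L=0$, left-multiplying both updates by $\tfrac{1}{n}(\mathbf{1}_n^\top\otimes I_d)$ annihilates the Laplacian terms, and since $v_{i,0}=\mathbf{0}_d$ this forces $\bar v_k=\mathbf{0}_d$ for all $k$, leaving $\bar{x}_{k+1}=\bar{x}_k-\tfrac{\eta}{n}\sum_i\nabla f_i(x_{i,k})$. Applying the descent lemma from Assumption~\ref{nonconvex:ass:fiu} to $f$ at $\bar{x}_k$, using $\nabla f(\bar x_k)=\tfrac{1}{n}\sum_i\nabla f_i(\bar x_k)$, the polarization identity $-\langle a,b\rangle=\tfrac{1}{2}(\|a-b\|^2-\|a\|^2-\|b\|^2)$, and $L_f$-smoothness to replace each $\nabla f_i(x_{i,k})$ by $\nabla f_i(\bar x_k)$, yields for $\eta\le 1/L_f$ a bound of the form $f(\bar{x}_{k+1})-f(\bar x_k)\le -\tfrac{\eta}{2}\|\nabla f(\bar x_k)\|^2+\tfrac{\eta L_f^2}{2n}\sum_i\|x_{i,k}-\bar x_k\|^2$. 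This already reduces the stationarity part of the claim to summing the consensus error.

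The two remaining ingredients are the consensus/dual energy and the compression error. For the former I would build a quadratic form in $(\mathbf{x}_k-\bar{\mathbf{x}}_k,\mathbf{v}_k-\mathbf{v}^*)$, for a suitable reference dual $\mathbf{v}^*$, as in the exact-communication analysis of \cite{yi2021linear}; since both $\mathbf{x}_k-\bar{\mathbf{x}}_k$ and $\mathbf{v}_k$ lie in the consensus-orthogonal subspace (range of $\mathbf{L}$), the spectral gap $\rho_2(L)$ supplies a negative drift on $\|\mathbf{x}_k-\bar{\mathbf{x}}_k\|^2$, and compression merely perturbs this drift through $\hat{\mathbf{x}}_k-\mathbf{x}_k$, whose norm is controlled by $\sum_i\|e_{i,k}\|^2$ via \eqref{nonconvex:xminusxhat}. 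For the compression error, the restriction $\psi\in(0,1/r]$ is the key: taking $\psi=1/r$ gives $e_{i,k}-\psi\mathcal{C}(e_{i,k})=-(\tfrac{\mathcal{C}(e_{i,k})}{r}-e_{i,k})$, so Assumption~\ref{nonconvex:ass:compression} yields the strict contraction $\mathbf{E}_{\mathcal{C}}[\|e_{i,k}-\psi\mathcal{C}(e_{i,k})\|^2]\le(1-\varphi)\|e_{i,k}\|^2$ (the general $\psi$ case follows by writing $e-\psi\mathcal{C}(e)=(1-\psi r)e-\psi r(\tfrac{\mathcal{C}(e)}{r}-e)$ and applying Cauchy--Schwarz to get factor $[1-\psi r(1-\sqrt{1-\varphi})]^2<1$). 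Since $e_{i,k+1}=(x_{i,k+1}-x_{i,k})+e_{i,k}-\psi\mathcal{C}(e_{i,k})$, a Young's inequality with a small $\theta>0$ gives $\mathbf{E}_{\mathcal{C}}[\|e_{i,k+1}\|^2]\le(1+\theta)(1-\varphi)\|e_{i,k}\|^2+(1+\tfrac{1}{\theta})\|x_{i,k+1}-x_{i,k}\|^2$, which contracts once $(1+\theta)(1-\varphi)<1$.

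Finally I would assemble the master function $W_k=c_0(f(\bar x_k)-f^*)+(\text{consensus--dual quadratic})+c_4\sum_i\mathbf{E}_{\mathcal{C}}[\|e_{i,k}\|^2]$ and add the three inequalities, choosing the weights $c_0,\dots,c_4$ and then the parameters $\alpha=\kappa_1\beta$, $\beta>\kappa_2$, $\eta\in(0,\kappa_3)$ so that every cross-term is dominated and $W_{k+1}\le W_k-\eta c(\sum_i\|x_{i,k}-\bar x_k\|^2+\|\nabla f(\bar x_k)\|^2)$ for some $c>0$. Telescoping from $0$ to $T$ and using $W_k\ge 0$ (finite $f^*$ by Assumption~\ref{nonconvex:ass:optset}, together with a positive-semidefinite quadratic) gives \eqref{nonconvex:thm-sm-equ1}, while \eqref{nonconvex:thm-sm-equ2} follows from $c_0(f(\bar x_T)-f^*)\le W_T\le W_0=\mathcal{O}(1)$. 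I expect the \emph{main obstacle} to be closing the feedback loop: the innovation term $\|x_{i,k+1}-x_{i,k}\|^2$ that drives the compression error is itself governed by $\alpha\mathbf{L}\hat{\mathbf{x}}_k+\beta\mathbf{v}_k+\nabla\mathbf{f}(\mathbf{x}_k)$, hence by the very consensus, dual, and compression quantities $W_k$ is designed to control, so bounding it reintroduces each Lyapunov component. Resolving this circular dependence is precisely what forces the coupled constraints defining $\kappa_1,\kappa_2,\kappa_3$, and checking that a feasible parameter choice exists is the delicate bookkeeping step.
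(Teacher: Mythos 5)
Your architecture coincides with the paper's own proof: the stacked reformulation, the average dynamics $\bar v_k={\bf 0}_d$ and $\bar x_{k+1}=\bar x_k-\eta\bar g_k$, the descent lemma at the average iterate, a Lyapunov function combining the objective gap, a consensus--dual quadratic, and the compression error $\|\bsx_k-\bsa_k\|^2$, the contraction of the compression error from Assumption~\ref{nonconvex:ass:compression} with $\psi\in(0,1/r]$ (the paper obtains the factor $(1+c_3)(1-\varphi\psi r)$ with $c_3=\varphi\psi r/2$ via convexity of $\|\cdot\|^2$; your Cauchy--Schwarz factor $[1-\psi r(1-\sqrt{1-\varphi})]^2$ serves the same purpose), the $\mathcal{O}(\eta^2)$ innovation bound that closes the feedback loop, and the telescoping of a nonnegative master function. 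Indeed, the paper's $V_k=\frac12\|\bsx_k\|^2_{\bsK}+\frac12\|\bsv_k+\frac1\beta\bsg_k^0\|^2_{\frac{\alpha+\beta}{\beta}\bsP}+\bsx_k^\top\bsK\bsP(\bsv_k+\frac1\beta\bsg_k^0)+n(f(\bar x_k)-f^*)+\|\bsx_k-\bsa_k\|^2$ is your $W_k$ with explicit weights, and you correctly identify the circular dependence of the innovation term as the source of the constraints defining $\kappa_1,\kappa_2,\kappa_3$.

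The one component where your proposal, as written, would fail is the consensus--dual quadratic ``in $(\bsx_k-\bar\bsx_k,\,\bsv_k-\bsv^*)$ for a suitable reference dual $\bsv^*$.'' In this nonconvex setting (no P--{\L} condition, iterates need not converge, stationary points need not be unique) there is no \emph{fixed} dual reference that works: a fixed point of the algorithm would require $\beta v_i=-\nabla f_i(\cdot)$ at an unknown and possibly never-attained consensus point, so for any fixed $\bsv^*$ the residual $\beta\bsv^*+\nabla\tilde f(\cdot)$ does not vanish and the quadratic cannot exhibit negative drift. The paper instead anchors the dual energy at the \emph{moving}, gradient-tracking reference $-\frac1\beta\bsg_k^0=-\frac1\beta\nabla\tilde f(\bar\bsx_k)$, and pairs it with the cross term $U_{3,k}=\bsx_k^\top\bsK\bsP(\bsv_k+\frac1\beta\bsg_k^0)$, which is indispensable because the $v$-update is undamped and the dual energy alone has no decrease. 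The price of the moving anchor is a set of terms your outline cannot currently absorb: the reference's motion produces $\|\bsg^0_{k+1}-\bsg^0_k\|^2\le\eta^2L_f^2\|\bar\bsg_k\|^2$ (paper's \eqref{nonconvex:gg}) and the cross-term expansion produces an order-$\eta$ term $+\frac{\eta}{8}\|\bar\bsg_k\|^2$, where $\bar\bsg_k$ is the average gradient at the \emph{local} iterates, not at $\bar x_k$. These must be absorbed either by retaining the slack $-\frac{\eta}{4}(1-2\eta L_f)\|\bar\bsg_k\|^2$ in the descent step (as the paper does; you discarded precisely this term when simplifying your descent bound) or by converting $\|\bar\bsg_k\|^2\le2\|\bar\bsg^0_k\|^2+2L_f^2\|\bsx_k\|^2_{\bsK}$ and charging the result against your $-\frac{\eta}{2}\|\nabla f(\bar x_k)\|^2$ and consensus budgets. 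With these two repairs --- gradient-tracking dual anchor plus cross term, and the $\|\bar\bsg_k\|^2$ bookkeeping --- your outline becomes the paper's proof; this is exactly the content of the paper's conditions $\epsilon_5-\eta\epsilon_6>0$ and $\beta>\kappa_2$.
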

\begin{proof}
	We use Lyapunov analysis to prove this theorem. More specifically, we first appropriately design a function $V_k$ which contains terms $\sum_{i=1}^{n}\mathbf{E}_{\mathcal{C}}[\|x_{i,k}-\bar{x}_k\|^2]$, $\mathbf{E}_{\mathcal{C}}[f(\bar{x}_{T})-f^*]$, and $\sum_{i=1}^{n}\mathbf{E}_{\mathcal{C}}[\|x_{i,k}-a_{i,k}\|^2]$ describing consensus, optimization, and compression errors, respectively.  We then prove that $\mathbf{E}_{\mathcal{C}}[V_k]$ is non-increasing by showing that the difference $\mathbf{E}_{\mathcal{C}}[V_k-V_{k+1}]$ can be lower bounded by $\sum_{i=1}^{n}\mathbf{E}_{\mathcal{C}}[\|x_{i,k}-\bar{x}_k\|^2+\|x_{i,k}-a_{i,k}\|^2+\|\nabla f(\bar{x}_k)\|^2]$. 
	We finally show that $V_k$ is non-negative and get \eqref{nonconvex:thm-sm-equ1}--\eqref{nonconvex:thm-sm-equ2} by summarizing the inequalities containing the difference $\mathbf{E}_{\mathcal{C}}[V_k-V_{k+1}]$. 
	The explicit expressions of the Lyapunov function $V_k$ and the right-hand sides of \eqref{nonconvex:thm-sm-equ1}--\eqref{nonconvex:thm-sm-equ2}, and the  detailed proof are given in Appendix~\ref{nonconvex:proof-thm-sm}. 
\end{proof}

We have several remarks on Theorem~\ref{nonconvex:thm-sm}.
Firstly, from \eqref{nonconvex:thm-sm-equ1}, we know that $\min_{k\in[T]}\{\sum_{i=1}^{n}\mathbf{E}_{\mathcal{C}}[\|x_{i,k}-\bar{x}_k\|^2+\|\nabla f(\bar{x}_k)\|^2]\}=\mathcal{O}(1/T)$. In other words, Algorithm~\ref{nonconvex:algorithm-pdgd} finds a first-order stationary point with the well-known rate $\mathcal{O}(1/T)$, which is the same as that achieved by the distributed algorithms with exact communication in the literature, e.g, \cite{yi2021linear,daneshmand2018second,hong2017prox,sun2019distributed,hajinezhad2019perturbed}. Secondly, from \eqref{nonconvex:thm-sm-equ2}, we know that the cost difference between the global optimum and the resulting stationary point is bounded. Thirdly, it should be pointed out that the settings on the parameters  $\alpha$, $\beta$, and $\eta$ are just sufficient conditions. With some modifications of the proofs, other forms of settings for these algorithm parameters still can guarantee the same type of convergence result. Fourthly, observe that the definitions of $\kappa_1$ and $\kappa_2$ given in Appendix~\ref{nonconvex:proof-thm-sm} are independent of the parameters related to the compressors. Therefore, the choice of the parameters $\alpha$ and $\beta$ is independent of the compressors.  Finally, the proof of Theorem~\ref{nonconvex:thm-sm} is inspired by the proof of Theorem~1 in \cite{yi2021linear}. However, due to the compressed compression, a different Lyapunov function is appropriately designed and the details are also different.

Then, with Assumption~\ref{nonconvex:ass:fil}, the following result states that Algorithm~\ref{nonconvex:algorithm-pdgd} can linearly find a global optimum.
\begin{theorem}\label{nonconvex:thm-ft}
	Suppose that Assumptions~\ref{nonconvex:ass:compression} and \ref{nonconvex:ass:graph}--\ref{nonconvex:ass:fil} hold. Let $\{x_{i,k}\}$ be the sequence generated by Algorithm~\ref{nonconvex:algorithm-pdgd} with  the same $\alpha$, $\beta$, $\eta$, and $\psi$ given in Theorem~\ref{nonconvex:thm-sm}. Then, for any $k\in\mathbb{N}_0$,
	\begin{align}\label{nonconvex:thm-ft-equ1}
		\sum_{i=1}^{n}\mathbf{E}_{\mathcal{C}}[\|x_{i,k}-\bar{x}_k\|^2+f(\bar{x}_k)-f^*]
		=\mathcal{O}((1-\epsilon)^{k}),
	\end{align}
	where $\epsilon$ is a constant in $(0,1)$ given in Appendix~\ref{nonconvex:proof-thm-ft}.
\end{theorem}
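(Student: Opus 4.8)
The plan is to reuse verbatim the Lyapunov function $V_k$ constructed for Theorem~\ref{nonconvex:thm-sm} and to upgrade its per-iteration \emph{decrease} into a genuine \emph{contraction}. Concretely, I would show that under the additional P--{\L} condition there exists a constant $\epsilon\in(0,1)$ such that
\begin{align}
	\mathbf{E}_{\mathcal{C}}[V_{k+1}]\le (1-\epsilon)\mathbf{E}_{\mathcal{C}}[V_k],\quad\forall k\in\mathbb{N}_0.\nonumber
\end{align}
Iterating this bound yields $\mathbf{E}_{\mathcal{C}}[V_k]\le(1-\epsilon)^k\mathbf{E}_{\mathcal{C}}[V_0]=\mathcal{O}((1-\epsilon)^k)$. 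Since $V_k$ is built so that the target quantity $\sum_{i=1}^n\|x_{i,k}-\bar{x}_k\|^2+n(f(\bar{x}_k)-f^*)$ is bounded above by a constant multiple of $V_k$ (both $\sum_i\|x_{i,k}-\bar{x}_k\|^2$ and $f(\bar{x}_k)-f^*$ appear as nonnegative summands of $V_k$), the claimed estimate \eqref{nonconvex:thm-ft-equ1} follows at once. Thus the whole proof reduces to establishing the displayed one-step contraction.

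To obtain the contraction I would start from the descent inequality already proved for Theorem~\ref{nonconvex:thm-sm}, namely that $\mathbf{E}_{\mathcal{C}}[V_k-V_{k+1}]$ is lower bounded by a positive combination of the three error quantities $\sum_{i=1}^n\mathbf{E}_{\mathcal{C}}[\|x_{i,k}-\bar{x}_k\|^2]$, $\sum_{i=1}^n\mathbf{E}_{\mathcal{C}}[\|x_{i,k}-a_{i,k}\|^2]$, and $\mathbf{E}_{\mathcal{C}}[\|\nabla f(\bar{x}_k)\|^2]$. The P--{\L} condition \eqref{nonconvex:equ:plc} enters only through the gradient term, converting $\|\nabla f(\bar{x}_k)\|^2\ge 2\nu(f(\bar{x}_k)-f^*)$, so the descent bound now also controls the optimization gap $f(\bar{x}_k)-f^*$ alongside the consensus and compression errors. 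The remaining task is then purely algebraic: to show that \emph{each} additive component of $V_k$ is upper bounded by a constant multiple of these same three quantities, so that the lower bound on $V_k-V_{k+1}$ can absorb a fixed fraction $\epsilon$ of $V_k$.

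The main obstacle is the \textbf{dual-variable component} of $V_k$. The consensus and compression components are, by construction, exactly $\sum_i\|x_{i,k}-\bar{x}_k\|^2$ and $\sum_i\|x_{i,k}-a_{i,k}\|^2$, and the P--{\L} bound handles the optimization gap; but $V_k$ also carries a quadratic term in a dual deviation $v_k-v^*$, and under the P--{\L} condition---unlike under strong convexity---neither the primal minimizer nor a matching dual optimum need be unique, so the usual coercive control of the dual error is not directly available. I would exploit the structural fact that, because of the zero initialization $a_{i,0}=b_{i,0}=v_{i,0}={\bf 0}_d$ and the update \eqref{nonconvex:kia-algo-dc-v}, the stacked dual iterate remains in the range of $L\otimes {\bf I}_d$ (the dual increment lies in this subspace and $\sum_{i=1}^n v_{i,k}={\bf 0}_d$ is conserved). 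Choosing the reference $v^*$ in the same subspace confines the deviation there, where connectedness of $\mathcal{G}$ (Assumption~\ref{nonconvex:ass:graph}) supplies the spectral gap $\rho_2(L)>0$ and hence the coercivity needed to bound $\|v_k-v^*\|^2$ by the Laplacian-weighted consensus term and the gradient mismatch already controlled by the descent.

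Establishing this dual estimate is the crux of the argument. Once it is in place, I would fix $\epsilon$ small enough---depending on $\nu$, $\rho_2(L)$, $L_{f}$, and the compressor constants $\varphi,r$---that it is simultaneously dominated by all three descent terms, and verify that the parameters $\alpha,\beta,\eta,\psi$ already prescribed in Theorem~\ref{nonconvex:thm-sm} continue to suffice, so that no retuning relative to the sublinear case is required. The explicit value of $\epsilon$ and the detailed computation are deferred to Appendix~\ref{nonconvex:proof-thm-ft}.
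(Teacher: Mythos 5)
Your overall skeleton---contract the Lyapunov function $V_k$ by a fixed factor per step, use the P--{\L} inequality to convert the $\|\nabla f(\bar{x}_k)\|^2$ term of the descent estimate into control of $f(\bar{x}_k)-f^*$, then read off \eqref{nonconvex:thm-ft-equ1} because $V_k$ dominates the target quantity---is exactly the paper's. The gap is in your treatment of the dual term. You take the descent inequality of Theorem~\ref{nonconvex:thm-sm} to control only the three quantities $\sum_i\|x_{i,k}-\bar{x}_k\|^2$, $\sum_i\|x_{i,k}-a_{i,k}\|^2$, and $\|\nabla f(\bar{x}_k)\|^2$ (as the main-text sketch suggests), and you then propose to close the loop by proving a coercivity estimate of the form $\|\bsv_k-\bsv^*\|^2\lesssim \text{consensus error}+\text{gradient mismatch}$, with $\bsv^*$ a fixed reference in $\mathrm{range}(\bsL)$. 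No such bound can hold: the dual variable is an integrator state, not determined by the current primal residuals. Concretely, take $\bsx_k={\bf 1}_n\otimes x^\dagger$ with $\nabla f(x^\dagger)=0$, $\bsa_k=\bsx_k$, and $\bsv_k$ an arbitrary nonzero vector in $\mathrm{range}(\bsL)$; then consensus error, compression error, and gradient (mis)match all vanish, while $\|\bsv_k-\bsv^*\|^2>0$ for generic $\bsv_k$. So the inequality you need fails pointwise, and with it the absorption of the dual component of $V_k$ into the descent. The spectral gap $\rho_2(L)>0$ only relates different norms of a vector confined to $\mathrm{range}(\bsL)$; it cannot manufacture a bound on the dual error from primal quantities.

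The paper sidesteps this in two ways your proposal misses. First, its Lyapunov function never references a dual optimum at all: the dual component is $\|\bsv_k+\frac{1}{\beta}\bsg_k^0\|^2_{\frac{\alpha+\beta}{\beta}\bsP}$ (plus a cross term), i.e., the deviation of $\bsv_k$ from the \emph{moving} target $-\frac{1}{\beta}\nabla\tilde{f}(\bar{\bsx}_k)$, which is well defined irrespective of uniqueness of minimizers---so the non-uniqueness issue you worry about never arises. Second, and decisively, the per-step descent estimate (Lemma~\ref{noncovex:lemma:pdgd}) is stronger than the main-text sketch: besides the three quantities you list, its right-hand side already contains the negative term $-\|\bsv_k+\frac{1}{\beta}\bsg_k^0\|^2_{\eta(\epsilon_3-\eta\epsilon_4)\bsP}$. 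Hence every additive piece of $\hat{V}_k$ (consensus error, dual deviation, optimality gap after applying P--{\L}, compression error) is directly dominated by the guaranteed decrease, and the contraction follows from the purely algebraic two-sided comparison $\epsilon_{10}\hat{V}_k\le V_k\le\epsilon_{11}\hat{V}_k$, giving $\epsilon=\epsilon_{12}/\epsilon_{11}$. To repair your argument, do not pursue the coercivity step (it is false); instead, carry the dual-deviation term explicitly through the descent analysis of Theorem~\ref{nonconvex:thm-sm}, so that it appears as one of the negative terms in the one-step inequality, after which your contraction argument goes through verbatim.
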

\begin{proof}
	This proof is based on the proof of Theorem~\ref{nonconvex:thm-sm}. From the P--{\L} condition, we know that $\mathbf{E}_{\mathcal{C}}[\|\nabla f(\bar{x}_k)\|^2]$ can be lower bounded by $\mathbf{E}_{\mathcal{C}}[f(\bar{x}_k)-f^*]$, which further implies the difference $\mathbf{E}_{\mathcal{C}}[V_k-V_{k+1}]$ can be lower bounded by $\mathbf{E}_{\mathcal{C}}[V_k]$. Therefore, $\mathbf{E}_{\mathcal{C}}[V_k]$ exponentially decreases to zero. Thus, \eqref{nonconvex:thm-ft-equ1} holds.
	The explicit expression of the right-hand side of \eqref{nonconvex:thm-ft-equ1} and the detailed proof are given in Appendix~\ref{nonconvex:proof-thm-ft}.
\end{proof}

We have several remarks on Theorem~\ref{nonconvex:thm-ft}.
Firstly, observe that Algorithm~\ref{nonconvex:algorithm-pdgd} uses the same algorithm parameters for the cases without and with the P--{\L} condition in Theorems~\ref{nonconvex:thm-sm} and \ref{nonconvex:thm-ft}, respectively. As a result, it is not needed to check the P--{\L} condition before  implementing Algorithm~\ref{nonconvex:algorithm-pdgd}, which is important since it is normally difficult to check that condition. Secondly, compared to \cite{li2021compressed} which used the same type of compressors and established linear convergence under the condition that the global cost function is strongly convex, we show linear convergence under the weaker P--{\L} condition and only use a half number of compression and communication operations per iteration since in the algorithm proposed in \cite{li2021compressed} each agent needs to communicate two compressed variables with its neighbors. Thirdly, compared to \cite{liu2021linear,kovalev2021linearly} which used unbiased compressors with  bounded
relative compression error and established linear convergence under the condition that each local cost function is strongly convex, we use the more general compressors and the weaker P--{\L} condition to show linear convergence. Lastly, compared to \cite{reisizadeh2019exact} which used unbiased compressors with  bounded
relative compression error but only achieved sublinear convergence under the condition that each local cost function is strongly convex, we not only use the more general compressors and the weaker P--{\L} condition, but also show strictly faster convergence.

\section{Error Feedback Based Compressed Communication Algorithm: Bounded Relative Compression Error}\label{nonconvex:sec-main-dc_ef}
In this section, we extend Algorithm~\ref{nonconvex:algorithm-pdgd} to error feedback version for biased compressors particularly.

\subsection{Algorithm Description}
\begin{algorithm}[tb]
	\caption{}
	\label{nonconvex:algorithm-pdgd_ef}
	\begin{algorithmic}[1]
		\STATE \textbf{Input}: positive parameters $\alpha$, $\beta$, $\eta$, $\psi$, and $\sigma$.
		\STATE \textbf{Initialize}: $ x_{i,0}\in\mathbb{R}^d$, $a_{i,0}=b_{i,0}=e_{i,0}=v_{i,0}={\bf 0}_d$, and $q_{i,0}=\hat{q}_{i,0}=\mathcal{C}(x_{i,0}),~\forall i\in[n]$.
		\FOR{$k=0,1,\dots$}
		\FOR{$i=1,\dots,n$  in parallel}
		\STATE  Broadcast $q_{i,k}$ and $\hat{q}_{i,k}$ to $\mathcal{N}_i$ and receive $q_{j,k}$ and $\hat{q}_{j,k}$ from $j\in\mathcal{N}_i$.
		\STATE  Update
		\begin{subequations}\label{nonconvex:kia-algo-dc_ef}
			\begin{align}
				a_{i,k+1}&=a_{i,k}+\psi q_{i,k}, \label{nonconvex:kia-algo-dc_ef-a}\\
				b_{i,k+1}&=b_{i,k}+\psi \Big(q_{i,k}-\sum_{j=1}^{n}L_{ij}q_{j,k}\Big), \label{nonconvex:kia-algo-dc_ef-b}\\
				x_{i,k+1} &= x_{i,k}-\eta\alpha\Big(a_{i,k}-b_{i,k}+\sum_{j=1}^{n}L_{ij}\hat{q}_{j,k}\Big)-\eta(\beta v_{i,k}+\nabla f_i(x_{i,k})), \label{nonconvex:kia-algo-dc_ef-x}\\
				v_{i,k+1} &=v_{i,k}+ \eta\beta\Big(a_{i,k}-b_{i,k}+\sum_{j=1}^{n}L_{ij}\hat{q}_{j,k}\Big),  \label{nonconvex:kia-algo-dc_ef-v}\\
				q_{i,k+1}&=\mathcal{C}(x_{i,k+1}-a_{i,k+1}), \label{nonconvex:kia-algo-dc_ef-q}\\
				e_{i,k+1}&=\sigma e_{i,k}+x_{i,k}-a_{i,k}-\hat{q}_{i,k}, \label{nonconvex:kia-algo-dc_ef-e}\\
				\hat{q}_{i,k+1}&=\mathcal{C}(\sigma e_{i,k+1}+x_{i,k+1}-a_{i,k+1}). \label{nonconvex:kia-algo-dc_ef-qhat}
			\end{align}
		\end{subequations}
		\ENDFOR
		\ENDFOR
		\STATE  \textbf{Output}: $\{x_{i,k}\}$.
	\end{algorithmic}
\end{algorithm}

The error feedback based communication-efficient distributed algorithm is presented in pseudo-code as Algorithm~\ref{nonconvex:algorithm-pdgd_ef}.
Without ambiguity, we denote
\begin{align}
	\hat{x}_{i,k}=a_{i,k}+\hat{q}_{i,k},\label{nonconvex:kia-algo-dc_ef-compact-xhat}
\end{align}
then \eqref{nonconvex:kia-algo-dc_ef-x} and \eqref{nonconvex:kia-algo-dc_ef-v} respectively can be written as \eqref{nonconvex:kia-algo-dc-x-compress} and \eqref{nonconvex:kia-algo-dc-v-compress} since $b_{i,k}=a_{i,k}-\sum_{j=1}^{n}L_{ij}a_{j,k},~\forall i\in[n]$. Therefore, Algorithm~\ref{nonconvex:algorithm-pdgd_ef} also is a communication-efficient extension of the distributed primal--dual algorithm  \eqref{nonconvex:yi-tac-alg}.

Compared to Algorithm~\ref{nonconvex:algorithm-pdgd},  Algorithm~\ref{nonconvex:algorithm-pdgd_ef} has two new variables $\hat{q}_{i,k}$ and $e_{i,k}$ which are used to estimate the biased compression error and accumulate the biased compression errors, respectively. Then each agent can use $\hat{q}_{i,k}$ to correct the bias induced by the biased compressors\footnote{For unbiased compressors, it is unnecessary to consider error feedback since $\mathbf{E}_{\mathcal{C}}[e_{i,k}]={\bf 0}_d$.}. However, compared to Algorithm~\ref{nonconvex:algorithm-pdgd}, there are twice number of compression and communication operations per iteration in Algorithm~\ref{nonconvex:algorithm-pdgd_ef}.

\subsection{Convergence Analysis}
Similar to Theorem~\ref{nonconvex:thm-sm}, we first have the following sublinear convergence result for Algorithm~\ref{nonconvex:algorithm-pdgd_ef} without Assumption~\ref{nonconvex:ass:fil}.
\begin{theorem}\label{nonconvex:thm-sm_ef}
	Suppose that Assumptions~\ref{nonconvex:ass:compression} and \ref{nonconvex:ass:graph}--\ref{nonconvex:ass:fiu} hold.  Let $\{x_{i,k}\}$ be the sequence generated by Algorithm~\ref{nonconvex:algorithm-pdgd_ef} with $\alpha=\kappa_1\beta$, $\beta>\kappa_2$, $\eta\in(0,\check{\kappa}_3)$, $\sigma\in(0,\kappa_0)$ and $\psi\in(0,1/r]$, where $\kappa_1,~\kappa_2$ and $\kappa_0,~\check{\kappa}_3$ are positive constants given in Appendices~\ref{nonconvex:proof-thm-sm} and \ref{nonconvex:proof-thm-sm_ef}, respectively. Then, for any $T\in\mathbb{N}_0$,
	\begin{subequations}
		\begin{align}
			&\sum_{k=0}^{T}\sum_{i=1}^{n}\mathbf{E}_{\mathcal{C}}[\|x_{i,k}-\bar{x}_k\|^2+\|\nabla f(\bar{x}_k)\|^2]=\mathcal{O}(1),\label{nonconvex:thm-sm_ef-equ1}\\
			&\mathbf{E}_{\mathcal{C}}[f(\bar{x}_{T})-f^*]=\mathcal{O}(1).\label{nonconvex:thm-sm_ef-equ2}
		\end{align}
	\end{subequations}
\end{theorem}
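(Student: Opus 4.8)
The plan is to follow the same Lyapunov framework used for Theorem~\ref{nonconvex:thm-sm}, augmenting the Lyapunov function with an extra term that tracks the error-feedback accumulator $e_{i,k}$. First I would rewrite Algorithm~\ref{nonconvex:algorithm-pdgd_ef} in the compact form \eqref{nonconvex:kia-algo-dc-compress} using $\hat{x}_{i,k}=a_{i,k}+\hat{q}_{i,k}$, which is legitimate because $b_{i,k}=a_{i,k}-\sum_{j=1}^{n}L_{ij}a_{j,k}$ exactly as noted after \eqref{nonconvex:kia-algo-dc_ef-compact-xhat}. Thus the primal--dual part is structurally identical to that of Algorithm~\ref{nonconvex:algorithm-pdgd}, and the only genuine difference is the effective compression error $x_{i,k}-\hat{x}_{i,k}$, which the error feedback is designed to shrink.

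The key observation is a telescoping identity for the error feedback. Writing $g_{i,k}=\sigma e_{i,k}+x_{i,k}-a_{i,k}$, the updates \eqref{nonconvex:kia-algo-dc_ef-e}--\eqref{nonconvex:kia-algo-dc_ef-qhat} give $\hat{q}_{i,k}=\mathcal{C}(g_{i,k})$ and $e_{i,k+1}=g_{i,k}-\mathcal{C}(g_{i,k})$, so that $x_{i,k}-\hat{x}_{i,k}=e_{i,k+1}-\sigma e_{i,k}$; the compression error perturbing the primal--dual dynamics is thus a telescoping difference of the accumulator. Applying \eqref{nonconvex:ass:compression_equ} to $e_{i,k+1}=g_{i,k}-\mathcal{C}(g_{i,k})$ together with Young's inequality yields, for any $\theta>0$, $\mathbf{E}_{\mathcal{C}}[\|e_{i,k+1}\|^2]\le(1+\theta)r_0\sigma^2\mathbf{E}_{\mathcal{C}}[\|e_{i,k}\|^2]+(1+1/\theta)r_0\mathbf{E}_{\mathcal{C}}[\|x_{i,k}-a_{i,k}\|^2]$. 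Choosing $\sigma$ small enough that $(1+\theta)r_0\sigma^2<1$ (this is what pins down the bound $\kappa_0$) makes the $e$-recursion contractive up to a residual driven by $\|x_{i,k}-a_{i,k}\|^2$.

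Then I would set $V_k^{\mathrm{ef}}=V_k+c\sum_{i=1}^{n}\mathbf{E}_{\mathcal{C}}[\|e_{i,k}\|^2]$, where $V_k$ is the Lyapunov function of Theorem~\ref{nonconvex:thm-sm} with $\|x_{i,k}-\hat{x}_{i,k}\|^2$ now expressed via $e_{i,k+1}-\sigma e_{i,k}$, and $c>0$ is fixed afterward. Substituting this identity into the one-step descent derived exactly as for Theorem~\ref{nonconvex:thm-sm}, and adding $c$ times the $e$-recursion, the goal is to show $\mathbf{E}_{\mathcal{C}}[V_k^{\mathrm{ef}}-V_{k+1}^{\mathrm{ef}}]$ is lower bounded by a positive multiple of $\sum_{i=1}^{n}\mathbf{E}_{\mathcal{C}}[\|x_{i,k}-\bar{x}_k\|^2+\|x_{i,k}-a_{i,k}\|^2+\|\nabla f(\bar{x}_k)\|^2]$. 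Summing over $k$ and invoking nonnegativity of $V_k^{\mathrm{ef}}$ gives \eqref{nonconvex:thm-sm_ef-equ1}, and \eqref{nonconvex:thm-sm_ef-equ2} then follows from smoothness and the descent of the optimization term, precisely as in Theorem~\ref{nonconvex:thm-sm}.

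The hard part is balancing the three coupled error sources — the consensus error, the auxiliary tracking error $\|x_{i,k}-a_{i,k}\|^2$, and the feedback accumulator $\|e_{i,k}\|^2$ — simultaneously. The added $e$-term contributes $+c(1+1/\theta)r_0\|x_{i,k}-a_{i,k}\|^2$ to the descent, while expanding $\|e_{i,k+1}-\sigma e_{i,k}\|^2$ in the main inequality injects further $\|e_{i,k}\|^2$ and $\|e_{i,k+1}\|^2$ contributions. The delicate point is to select $\sigma\in(0,\kappa_0)$, the weight $c$, the parameter $\theta$, and a possibly smaller step-size range $\eta\in(0,\check{\kappa}_3)$ so that the negative $\|x_{i,k}-a_{i,k}\|^2$ and $\|e_{i,k}\|^2$ terms still dominate after all substitutions and $V_k^{\mathrm{ef}}$ remains nonnegative; this coupling is exactly what forces the extra parameter $\sigma$ and the tighter bound $\check{\kappa}_3$ relative to $\kappa_3$.
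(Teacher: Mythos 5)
Your proposal matches the paper's own proof essentially step for step: the telescoping identity $x_{i,k}-\hat{x}_{i,k}=e_{i,k+1}-\sigma e_{i,k}$, the contractive recursion $\mathbf{E}_{\mathcal{C}}[\|\bse_{k+1}\|^2]\le r_0r_1\sigma^2\|\bse_k\|^2+r_0r_2\|\bsx_k-\bsa_k\|^2$ obtained from Assumption~\ref{nonconvex:ass:compression} and Young's inequality (the paper's $r_1,r_2$ are exactly your $1+\theta$, $1+1/\theta$, and its $\kappa_0=1/\sqrt{r_0r_1}$ is your condition $(1+\theta)r_0\sigma^2<1$), and the augmented Lyapunov function $W_k=V_k+\tau_0\|\bse_k\|^2$ (your $V_k^{\mathrm{ef}}$ with $c=\tau_0$) are precisely what the paper uses in Appendix~\ref{nonconvex:proof-thm-sm_ef}. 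The balancing of constants you flag as the delicate point is carried out there via the constants $\tau_0,\dots,\tau_4$ and $\check{c}_4$, followed by the same summation and nonnegativity argument you describe.
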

\begin{proof}
	This proof is similar to the proof of Theorem~\ref{nonconvex:thm-sm}, but uses a different Lyapunov function $W_k$.
	Due to space limitations, the explicit expressions of the Lyapunov function $W_k$ and the right-hand sides of \eqref{nonconvex:thm-sm_ef-equ1}--\eqref{nonconvex:thm-sm_ef-equ2}, and the detailed proof are given in Appendix~\ref{nonconvex:proof-thm-sm_ef}.
\end{proof}

Similar to Theorem~\ref{nonconvex:thm-ft}, we then have the following linear convergence result for Algorithm~\ref{nonconvex:algorithm-pdgd_ef} with Assumption~\ref{nonconvex:ass:fil}.
\begin{theorem}\label{nonconvex:thm-ft_ef}
	Suppose that Assumptions~\ref{nonconvex:ass:compression} and \ref{nonconvex:ass:graph}--\ref{nonconvex:ass:fil} hold. Let $\{x_{i,k}\}$ be the sequence generated by Algorithm~\ref{nonconvex:algorithm-pdgd_ef} with  the same $\alpha$, $\beta$, $\eta$, $\sigma$, and $\psi$ given in Theorem~\ref{nonconvex:thm-sm_ef}. Then, for any $k\in\mathbb{N}_0$,
	\begin{align}\label{nonconvex:thm-ft_ef-equ1}
		\sum_{i=1}^{n}\mathbf{E}_{\mathcal{C}}[\|x_{i,k}-\bar{x}_k\|^2+f(\bar{x}_k)-f^*]
		=\mathcal{O}((1-\check{\epsilon})^{k}),
	\end{align}
	where $\check{\epsilon}$ is a constant in $(0,1)$ given in Appendix~\ref{nonconvex:proof-thm-ft_ef}.
\end{theorem}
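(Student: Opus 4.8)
The plan is to build directly on the proof of Theorem~\ref{nonconvex:thm-sm_ef}, mirroring the way Theorem~\ref{nonconvex:thm-ft} upgrades Theorem~\ref{nonconvex:thm-sm}, and to invoke the P--{\L} condition of Assumption~\ref{nonconvex:ass:fil} to convert a gradient term into an optimization-gap term. From the proof of Theorem~\ref{nonconvex:thm-sm_ef} one already has the Lyapunov function $W_k$ together with a one-step descent estimate whose right-hand side is, schematically, of the form
\begin{align*}
\mathbf{E}_{\mathcal{C}}[W_k-W_{k+1}] \ge \sum_{i=1}^{n}\mathbf{E}_{\mathcal{C}}\big[c_1\|x_{i,k}-\bar{x}_k\|^2 + c_2\|x_{i,k}-a_{i,k}\|^2 + c_3\|e_{i,k}\|^2\big] + c_4\,\mathbf{E}_{\mathcal{C}}[\|\nabla f(\bar{x}_k)\|^2]
\end{align*}
with positive constants $c_1,\dots,c_4$ determined by the parameter ranges $\alpha=\kappa_1\beta$, $\beta>\kappa_2$, $\eta\in(0,\check{\kappa}_3)$, $\sigma\in(0,\kappa_0)$, and $\psi\in(0,1/r]$. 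First I would reuse this estimate verbatim, since Theorem~\ref{nonconvex:thm-ft_ef} keeps exactly the parameter choices of Theorem~\ref{nonconvex:thm-sm_ef}.

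Next I would apply Assumption~\ref{nonconvex:ass:fil}: by \eqref{nonconvex:equ:plc} we have $\|\nabla f(\bar{x}_k)\|^2 \ge 2\nu(f(\bar{x}_k)-f^*)$, so the gradient term in the descent estimate can be replaced by a positive multiple of the optimization gap $f(\bar{x}_k)-f^*$. After this substitution the lower bound on $\mathbf{E}_{\mathcal{C}}[W_k-W_{k+1}]$ controls, up to positive constants, all four ingredients: the consensus error, the compression error, the accumulated error-feedback term, and the optimization gap.

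The crux is then to show that $W_k$ itself is upper bounded, up to a single constant factor, by exactly this same collection of quantities; this requires dispatching the auxiliary pieces of $W_k$ (the dual-variable and cross terms designed in the proof of Theorem~\ref{nonconvex:thm-sm_ef}) back onto the basic consensus, compression, error-feedback, and gap quantities. Once that is established, there exists $\check{\epsilon}\in(0,1)$ with $\mathbf{E}_{\mathcal{C}}[W_k-W_{k+1}] \ge \check{\epsilon}\,\mathbf{E}_{\mathcal{C}}[W_k]$, i.e. $\mathbf{E}_{\mathcal{C}}[W_{k+1}] \le (1-\check{\epsilon})\mathbf{E}_{\mathcal{C}}[W_k]$, so iterating yields $\mathbf{E}_{\mathcal{C}}[W_k]=\mathcal{O}((1-\check{\epsilon})^k)$; since the left-hand side of \eqref{nonconvex:thm-ft_ef-equ1} is dominated by $W_k$, the claim follows, with the explicit $\check{\epsilon}$ recorded in Appendix~\ref{nonconvex:proof-thm-ft_ef}.

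The hard part will be the error-feedback term. Because $e_{i,k}$ evolves through \eqref{nonconvex:kia-algo-dc_ef-e} as $e_{i,k+1}=\sigma e_{i,k}+(x_{i,k}-a_{i,k}-\hat{q}_{i,k})$, its squared norm does not contract on its own; its per-step change is coupled both to the compression error through the innovation $x_{i,k}-a_{i,k}-\hat{q}_{i,k}$ and to the contraction factor $\sigma$. In the sublinear analysis of Theorem~\ref{nonconvex:thm-sm_ef} it suffices that $\sum_k\sum_i\mathbf{E}_{\mathcal{C}}[\|e_{i,k}\|^2]$ is finite, whereas here I must extract a strict \emph{geometric} decrease of the error-feedback component of $W_k$. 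This forces a simultaneous balancing of the weight assigned to $\|e_{i,k}\|^2$ in $W_k$, the admissible range $\sigma\in(0,\kappa_0)$, and the slack left in the compression-error term $\|x_{i,k}-a_{i,k}\|^2$, so that the cross terms produced by squaring \eqref{nonconvex:kia-algo-dc_ef-e} are absorbed and every constituent of $W_k$ contracts at the common rate $\check{\epsilon}$.
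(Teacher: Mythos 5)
Your overall skeleton is the same as the paper's: reuse the one-step estimate for $W_k$ established in the proof of Theorem~\ref{nonconvex:thm-sm_ef}, convert the gradient term into the optimality gap via the P--{\L} inequality \eqref{nonconvex:gg3}, bound $W_k$ by a constant multiple of the dissipated quantities, and iterate the resulting contraction. However, there is a genuine gap in the step you call the crux. Your schematic descent estimate omits the dual-variable dissipation term: the paper's inequality \eqref{nonconvex:vkLya-sm2_ef} also contains $-\big\|\bsv_k+\frac{1}{\beta}\bsg_k^0\big\|^2_{\eta(\epsilon_3-\eta\epsilon_4)\bsP}$ on its right-hand side, and this term is indispensable. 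Since $W_k$ contains $U_{2,k}=\frac{1}{2}\big\|\bsv_k+\frac{1}{\beta}\bsg_k^0\big\|^2_{\frac{\alpha+\beta}{\beta}\bsP}$, your plan to upper bound $W_k$ by a constant times consensus error $+$ compression error $+$ error-feedback $+$ optimality gap cannot succeed: the dual variable $\bsv_k$ is an independent state that those four quantities do not control. Concretely, at a state where every $x_{i,k}=\bar{x}_k$ equals a global minimizer, $a_{i,k}=x_{i,k}$, and $e_{i,k}=\mathbf{0}_d$, all four of your quantities vanish, yet $\bsv_k+\frac{1}{\beta}\bsg_k^0$ is generically nonzero (the individual gradients $\nabla f_i(\bar{x}_k)$ need not vanish even though their average does, and $\bsP$ is positive definite), so $W_k>0$; hence no such constant exists, and no contraction $\mathbf{E}_{\mathcal{C}}[W_k-W_{k+1}]\ge\check{\epsilon}\,\mathbf{E}_{\mathcal{C}}[W_k]$ can be extracted from your four quantities alone.

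The repair is exactly what the paper does: keep the dual term on both sides rather than dispatch it. The descent estimate \eqref{nonconvex:vkLya-sm2_ef} already dissipates the dual error with coefficient $\eta(\epsilon_3-\eta\epsilon_4)>0$, and the comparison function is not your four-term sum but $\hat{W}_k$, which retains the dual term; the cross term $U_{3,k}$ is absorbed into the consensus and dual terms by the Cauchy--Schwarz-type inequality \eqref{nonconvex:lemma:inequality-arithmetic-equ}, giving $W_k\le\epsilon_{11}\hat{W}_k$ in \eqref{nonconvex:vkLya3.1_ef}. Combining \eqref{nonconvex:vkLya-sm2_ef}, \eqref{nonconvex:gg3}, and this bound yields $\mathbf{E}_{\mathcal{C}}[W_{k+1}]\le\mathbf{E}_{\mathcal{C}}[W_k-\check{\epsilon}_{12}\hat{W}_k]\le(1-\check{\epsilon})\mathbf{E}_{\mathcal{C}}[W_k]$ with $\check{\epsilon}=\check{\epsilon}_{12}/\epsilon_{11}$, which is the paper's \eqref{nonconvex:vkLya2.1_ef}; iterating and using $W_k\ge\epsilon_{10}\hat{W}_k\ge 0$ finishes the argument. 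Finally, the difficulty you single out for the error-feedback term is not actually new work: the proof of Theorem~\ref{nonconvex:thm-sm_ef} does not merely use summability of $\sum_k\mathbf{E}_{\mathcal{C}}[\|\bse_k\|^2]$; it already establishes the per-step dissipation $-\tau_4\|\bse_k\|^2$ with $\tau_4>0$ in \eqref{nonconvex:vkLya-sm2_ef} (via the weight $\tau_0$ and the restriction $\sigma\in(0,\kappa_0)$), so the geometric contraction of the error-feedback component is already available once the dual-term issue above is handled.
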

\begin{proof}
	This proof is similar to the proof of Theorem~\ref{nonconvex:thm-ft}, but uses the Lyapunov function $W_k$ as used in the proof of Theorem~\ref{nonconvex:thm-sm_ef}.
	Due to space limitations, the expression of the right-hand side of \eqref{nonconvex:thm-ft_ef-equ1} and the detailed proof are given in  Appendix~\ref{nonconvex:proof-thm-ft_ef}.
\end{proof}

\section{Compressed Communication Algorithm: Bounded Absolute Compression Error}\label{nonconvex:sec-main-dc_determin}
In this section, we use the compressors with bounded absolute compression error to design a communication-efficient distributed algorithm and analyze the convergence properties of the proposed algorithm in various setups.

\subsection{Algorithm Description}
The communication-efficient distributed algorithm is presented in pseudo-code as Algorithm~\ref{nonconvex:algorithm-pdgd_determin}.
\begin{algorithm}[tb]
	\caption{}
	\label{nonconvex:algorithm-pdgd_determin}
	\begin{algorithmic}[1]
		\STATE \textbf{Input}: positive parameters $\alpha$, $\beta$, $\eta$, and a positive scaling sequence $\{s_k\}$.
		\STATE \textbf{Initialize}: $ x_{i,0}\in\mathbb{R}^d$, $\hat{x}_{i,-1}=y_{i,-1}=v_{i,0}={\bf 0}_d$, and $q_{i,0}=\mathcal{C}(x_{i,0}/s_0),~\forall i\in[n]$.
		\FOR{$k=0,1,\dots$}
		\FOR{$i=1,\dots,n$  in parallel}
		\STATE  Broadcast $q_{i,k}$ to $\mathcal{N}_i$ and receive $q_{j,k}$ from $j\in\mathcal{N}_i$.
		\STATE  Update
		\begin{subequations}\label{nonconvex:kia-algo-dc_determin}
			\begin{align}
				\hat{x}_{i,k}&=\hat{x}_{i,k-1}+s_kq_{i,k}, \label{nonconvex:kia-algo-dc-xhat_determin}\\
				y_{i,k}&=y_{i,k-1}+s_kq_{i,k}-s_k\sum_{j=1}^{n}L_{ij}q_{j,k}, \label{nonconvex:kia-algo-dc-y_determin}\\
				x_{i,k+1} &= x_{i,k}-\eta\alpha(\hat{x}_{i,k}-y_{i,k})-\eta(\beta v_{i,k}+\nabla f_i(x_{i,k})), \label{nonconvex:kia-algo-dc-x_determin}\\
				v_{i,k+1} &=v_{i,k}+ \eta\beta(\hat{x}_{i,k}-y_{i,k}),  \label{nonconvex:kia-algo-dc-v_determin}\\
				q_{i,k+1}&=\mathcal{C}((x_{i,k+1}-\hat{x}_{i,k})/s_{k+1}). \label{nonconvex:kia-algo-dc-q_determin}
			\end{align}
		\end{subequations}
		\ENDFOR
		\ENDFOR
		\STATE  \textbf{Output}: $\{x_{i,k}\}$.
	\end{algorithmic}
\end{algorithm}
By mathematical induction, it is straightforward to check that $y_{i,k}=\hat{x}_{i,k}-\sum_{j=1}^{n}L_{ij}\hat{x}_{j,k},~\forall i\in[n]$. Therefore, \eqref{nonconvex:kia-algo-dc-x_determin} and \eqref{nonconvex:kia-algo-dc-v_determin} can be rewritten as \eqref{nonconvex:kia-algo-dc-x-compress} and \eqref{nonconvex:kia-algo-dc-v-compress}, respectively. Therefore, Algorithm~\ref{nonconvex:algorithm-pdgd_determin} also is a communication-efficient extension of the distributed primal--dual algorithm \eqref{nonconvex:yi-tac-alg}. Moreover, same as Algorithm~\ref{nonconvex:algorithm-pdgd}, in Algorithm~\ref{nonconvex:algorithm-pdgd_determin} each agent only communicates one compressed variable with its neighbors. The difference between Algorithms~\ref{nonconvex:algorithm-pdgd} and \ref{nonconvex:algorithm-pdgd_determin} is that they use different types of compressors.

\subsection{Convergence Analysis}

We first analyze the performance of Algorithm~\ref{nonconvex:algorithm-pdgd_determin} when the class of compressors satisfying Assumption~\ref{nonconvex:ass:compression_quantization} is used.
Before stating the convergence results, we would like to briefly explain why this algorithm works.
From \eqref{nonconvex:kia-algo-dc-xhat_determin}, \eqref{nonconvex:kia-algo-dc-q_determin}, and \eqref{nonconvex:ass:compression_equ_quantization}, we have
\begin{align}\label{nonconvex:xminusxhat_quantization2}
	\mathbf{E}_{\mathcal{C}}[\|x_{i,k}-\hat{x}_{i,k}\|_p^2]
	&=\mathbf{E}_{\mathcal{C}}[\|x_{i,k}-\hat{x}_{i,k-1}
	-s_k\mathcal{C}((x_{i,k}-\hat{x}_{i,k-1})/s_k)\|_p^2]\nonumber\\
	&=\mathbf{E}_{\mathcal{C}}[s_k^2\|(x_{i,k}-\hat{x}_{i,k-1})/s_k
	-\mathcal{C}((x_{i,k}-\hat{x}_{i,k-1})/s_k)\|_p^2]\nonumber\\
	&\le Cs_k^2.
\end{align}
If we let $s_k$ exponentially decrease to zero, then the error caused by the compressed communication is neglectable. In this case, Algorithm~\ref{nonconvex:algorithm-pdgd_determin} using the second class of  compressors can have comparable convergence properties as the corresponding algorithm with exact communication, i.e., \eqref{nonconvex:yi-tac-alg}.

Similar to Theorem~\ref{nonconvex:thm-sm}, we have the following sublinear convergence result.

\begin{theorem}\label{nonconvex:thm-sm_quantization}
	Suppose that Assumptions~\ref{nonconvex:ass:compression_quantization} and \ref{nonconvex:ass:graph}--\ref{nonconvex:ass:fiu} hold.  Let $\{x_{i,k}\}$ be the sequence generated by Algorithm~\ref{nonconvex:algorithm-pdgd_determin} with $\alpha=\kappa_1\beta$, $\beta>\kappa_2$, $\eta\in(0,\tilde{\kappa}_3)$, and $s_k=s_0\gamma^k$, where $\kappa_1$ and $\kappa_2$ are positive constants given in Appendix~\ref{nonconvex:proof-thm-sm}, $\tilde{\kappa}_3$ a positive constant given in Appendix~\ref{nonconvex:proof-thm-sm_quantization}, $s_0$ is an arbitrary positive constant, and $\gamma$ is an arbitrary constant in $(0,1)$. Then, for any $T\in\mathbb{N}_0$,
	\begin{subequations}
		\begin{align}
			&\sum_{k=0}^{T}\sum_{i=1}^{n}\mathbf{E}_{\mathcal{C}}[\|x_{i,k}-\bar{x}_k\|^2+\|\nabla f(\bar{x}_k)\|^2]=\mathcal{O}(1),\label{nonconvex:thm-sm-equ1_quantization}\\
			&\mathbf{E}_{\mathcal{C}}[f(\bar{x}_{T})-f^*]=\mathcal{O}(1).\label{nonconvex:thm-sm-equ2_quantization}
		\end{align}
	\end{subequations}
\end{theorem}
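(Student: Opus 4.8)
The plan is to reuse the Lyapunov-descent template behind Theorem~\ref{nonconvex:thm-sm}, but to treat the compression error here as an exogenous, \emph{summable} perturbation rather than a self-contracting state. First I would rewrite Algorithm~\ref{nonconvex:algorithm-pdgd_determin} in the compact form already noted in the text: by the induction identity $y_{i,k}=\hat{x}_{i,k}-\sum_{j=1}^{n}L_{ij}\hat{x}_{j,k}$, the updates \eqref{nonconvex:kia-algo-dc-x_determin}--\eqref{nonconvex:kia-algo-dc-v_determin} coincide with \eqref{nonconvex:kia-algo-dc-x-compress}--\eqref{nonconvex:kia-algo-dc-v-compress}, i.e., the exact primal--dual scheme \eqref{nonconvex:yi-tac-alg} with $x_{j,k}$ replaced by $\hat{x}_{j,k}$. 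Writing $\hat{x}_{j,k}=x_{j,k}-(x_{j,k}-\hat{x}_{j,k})$, each Laplacian term $\sum_j L_{ij}\hat{x}_{j,k}$ splits into the exact term $\sum_j L_{ij}x_{j,k}$ plus a perturbation $-\sum_j L_{ij}(x_{j,k}-\hat{x}_{j,k})$. By the equivalence of $\|\cdot\|_p$ and $\|\cdot\|$ together with \eqref{nonconvex:xminusxhat_quantization2}, this perturbation obeys $\sum_{i=1}^{n}\mathbf{E}_{\mathcal{C}}[\|x_{i,k}-\hat{x}_{i,k}\|^2]=\mathcal{O}(s_k^2)=\mathcal{O}(\gamma^{2k})$, which is summable because $\gamma\in(0,1)$; this is the crucial structural difference from Assumption~\ref{nonconvex:ass:compression}, where the error had to be tracked by an auxiliary state.

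Second, passing to stacked vector notation with the Kronecker-lifted Laplacian $\bsL=L\otimes\bsI_d$, I would adopt essentially the same Lyapunov function $V_k$ used for the exact iteration \eqref{nonconvex:yi-tac-alg} in \cite{yi2021linear} and Appendix~\ref{nonconvex:proof-thm-sm}, combining the consensus error $\sum_i\|x_{i,k}-\bar{x}_k\|^2$, the optimality gap $f(\bar{x}_k)-f^*$, and a dual-tracking term in $v_{i,k}$, but now \emph{without} the $\sum_i\|x_{i,k}-a_{i,k}\|^2$ term, since there is no self-correcting compression state to bound. Using the smoothness of each $f_i$ (Assumption~\ref{nonconvex:ass:fiu}), the spectral structure of $L$ on a connected graph (Assumption~\ref{nonconvex:ass:graph}), and Young's inequality on every cross-term between the perturbation and the state, I expect to obtain a one-step inequality of the form
\begin{align*}
\mathbf{E}_{\mathcal{C}}[V_k-V_{k+1}]\ge c_1\sum_{i=1}^{n}\mathbf{E}_{\mathcal{C}}[\|x_{i,k}-\bar{x}_k\|^2]+c_2\mathbf{E}_{\mathcal{C}}[\|\nabla f(\bar{x}_k)\|^2]-c_3 s_k^2
\end{align*}
for positive constants $c_1,c_2,c_3$ valid over the stated ranges $\alpha=\kappa_1\beta$, $\beta>\kappa_2$, $\eta\in(0,\tilde{\kappa}_3)$.

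Finally, I would sum this inequality over $k=0,\dots,T$. The left side telescopes and is bounded above by $V_0$, while the perturbation sum satisfies $c_3\sum_{k=0}^{T}s_k^2=c_3 s_0^2\sum_{k=0}^{T}\gamma^{2k}\le c_3 s_0^2/(1-\gamma^2)=\mathcal{O}(1)$; after checking $V_k\ge0$ (using $f^*>-\infty$ from Assumption~\ref{nonconvex:ass:optset}), this gives \eqref{nonconvex:thm-sm-equ1_quantization}. The same one-step inequality also yields $\mathbf{E}_{\mathcal{C}}[V_T]\le V_0+c_3\sum_{k}s_k^2=\mathcal{O}(1)$, and since $V_T$ dominates a positive multiple of $\mathbf{E}_{\mathcal{C}}[f(\bar{x}_T)-f^*]$, the bound \eqref{nonconvex:thm-sm-equ2_quantization} follows. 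The hard part will be the bookkeeping in the descent step: because $\hat{x}$ (not $x$) drives both the primal and the dual updates, the perturbation enters several places and couples the consensus, optimality, and dual-error channels, so the weights in $V_k$ and the Young splittings must be tuned so that every perturbation cross-term is either absorbed into the negative $c_1,c_2$ terms or dominated by a constant multiple of $s_k^2$, while keeping $c_1,c_2>0$ across the full advertised parameter range.
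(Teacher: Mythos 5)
Your proposal follows essentially the same route as the paper's own proof: the paper also reuses the per-step estimates behind Theorem~\ref{nonconvex:thm-sm} with the Lyapunov function $U_k$ (i.e., $V_k$ stripped of the $\sum_{i=1}^{n}\|x_{i,k}-a_{i,k}\|^2$ term), treats the compression error as an exogenous perturbation bounded via \eqref{nonconvex:xminusxhat_quantization2} and the $p$-norm equivalence, and sums the resulting one-step inequality, absorbing the perturbation through the geometric series $\sum_k s_k^2\le s_0^2/(1-\gamma^2)$ before invoking $U_k\ge 0$ and $U_k\ge n(f(\bar{x}_k)-f^*)$. Your plan, including the modified constants $c_1,c_2>0$ over the stated parameter range (the paper's $\tilde{\epsilon}_2,\tilde{\epsilon}_4$), is a faithful blueprint of Appendix~\ref{nonconvex:proof-thm-sm_quantization}.
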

\begin{proof}
	This proof is similar to the proof of Theorem~\ref{nonconvex:thm-sm}, but uses the non-negative function $U_k$ which contains terms describing consensus and optimization errors and is given in Appendix~\ref{nonconvex:proof-thm-sm}. We show that the difference $\mathbf{E}_{\mathcal{C}}[U_{k+1}-U_k]$ can be lower bounded by $\sum_{i=1}^{n}\mathbf{E}_{\mathcal{C}}[\|x_{i,k}-\bar{x}_k\|^2-\|x_{i,k}-\hat{x}_{i,k}\|^2+\|\nabla f(\bar{x}_k)\|^2]$. From \eqref{nonconvex:xminusxhat_quantization2} and $s_k=s_0\gamma^k$, we can get \eqref{nonconvex:thm-sm-equ1_quantization}--\eqref{nonconvex:thm-sm-equ2_quantization} by summarizing the inequalities containing the difference $\mathbf{E}_{\mathcal{C}}[U_{k+1}-U_k]$.
	Due to space limitations, the explicit expressions of the right-hand sides of \eqref{nonconvex:thm-sm-equ1_quantization}--\eqref{nonconvex:thm-sm-equ2_quantization} and the detailed proof are given Appendix~\ref{nonconvex:proof-thm-sm_quantization}.
\end{proof}

The remarks after Theorem~\ref{nonconvex:thm-sm} are still valid for Theorem~\ref{nonconvex:thm-sm_quantization}. Moreover, we would like to point out that the choice of the parameter $\gamma$ is also independent of the compressors since the definition of $\tilde{\kappa}_3$  given in Appendix~\ref{nonconvex:proof-thm-sm_quantization} is independent of the parameters related to the compressors. 

Similar to Theorem~\ref{nonconvex:thm-ft}, we then have the following linear convergence result for Algorithm~\ref{nonconvex:algorithm-pdgd_determin} when the class of compressors satisfying Assumption~\ref{nonconvex:ass:compression_quantization} is used.
\begin{theorem}\label{nonconvex:thm-ft_quantization}
	Suppose that Assumptions~\ref{nonconvex:ass:compression_quantization} and \ref{nonconvex:ass:graph}--\ref{nonconvex:ass:fil} hold. Let $\{x_{i,k}\}$ be the sequence generated by Algorithm~\ref{nonconvex:algorithm-pdgd_determin} with  the same $\alpha$, $\beta$, $\eta$, and $s_k$ given in Theorem~\ref{nonconvex:thm-sm_quantization}. Then, for any $k\in\mathbb{N}_0$,
	\begin{align}\label{nonconvex:thm-ft-equ1_quantization}
		\sum_{i=1}^{n}\mathbf{E}_{\mathcal{C}}[\|x_{i,k}-\bar{x}_k\|^2+f(\bar{x}_k)-f^*]
		=\mathcal{O}((1-\tilde{\epsilon})^{k}),
	\end{align}
	where $\tilde{\epsilon}$ is a constant in $(0,1)$ given in Appendix~\ref{nonconvex:proof-thm-ft_quantization}.
\end{theorem}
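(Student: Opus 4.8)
The plan is to build directly on the analysis of Theorem~\ref{nonconvex:thm-sm_quantization}, reusing the same non-negative Lyapunov function $U_k$ together with its per-iteration descent inequality, and then to inject the P--{\L} condition exactly as was done in passing from Theorem~\ref{nonconvex:thm-sm} to Theorem~\ref{nonconvex:thm-ft}. First I would recall from the proof of Theorem~\ref{nonconvex:thm-sm_quantization} the key one-step estimate, namely that
\begin{align*}
	\mathbf{E}_{\mathcal{C}}[U_{k+1}-U_k]
	\le -c_1\sum_{i=1}^{n}\mathbf{E}_{\mathcal{C}}[\|x_{i,k}-\bar{x}_k\|^2]
	-c_2\,\mathbf{E}_{\mathcal{C}}[\|\nabla f(\bar{x}_k)\|^2]
	+c_3\sum_{i=1}^{n}\mathbf{E}_{\mathcal{C}}[\|x_{i,k}-\hat{x}_{i,k}\|^2]
\end{align*}
for suitable positive constants $c_1,c_2,c_3$ depending on $\alpha,\beta,\eta$, where the last (compression-error) term is controlled by \eqref{nonconvex:xminusxhat_quantization2} via $\mathbf{E}_{\mathcal{C}}[\|x_{i,k}-\hat{x}_{i,k}\|_p^2]\le Cs_k^2=Cs_0^2\gamma^{2k}$.

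The central new ingredient is the P--{\L} condition. Using \eqref{nonconvex:equ:plc} applied at $\bar{x}_k$, I would lower bound $\mathbf{E}_{\mathcal{C}}[\|\nabla f(\bar{x}_k)\|^2]\ge 2\nu\,\mathbf{E}_{\mathcal{C}}[f(\bar{x}_k)-f^*]$, which converts the gradient term in the descent inequality into a term proportional to the optimality gap that already appears inside $U_k$. Since $U_k$ is designed to be a positive combination of $\sum_i\|x_{i,k}-\bar{x}_k\|^2$, the optimality gap $f(\bar{x}_k)-f^*$, and the dual/consensus auxiliary terms, the goal is to show that the negative right-hand side dominates a fixed fraction of $U_k$ itself, i.e. to establish
\begin{align*}
	\mathbf{E}_{\mathcal{C}}[U_{k+1}]
	\le (1-\tilde\epsilon)\,\mathbf{E}_{\mathcal{C}}[U_k]+c_3\sum_{i=1}^{n}\mathbf{E}_{\mathcal{C}}[\|x_{i,k}-\hat{x}_{i,k}\|^2]
\end{align*}
for some $\tilde\epsilon\in(0,1)$. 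This requires choosing $\tilde\epsilon$ small enough that each coefficient inside $U_k$ is dominated by the corresponding negative coefficient produced above; the consensus and optimization parts are handled by $c_1$ and by $2\nu c_2$ respectively, while the auxiliary-variable parts must be absorbed using the contraction structure already present in the $U_k$ descent, exactly paralleling the role of $\epsilon$ in Appendix~\ref{nonconvex:proof-thm-ft}.

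The final step is to unroll this one-step contraction. Writing $\gamma_0=1-\tilde\epsilon$ and $D_k=c_3\sum_i\mathbf{E}_{\mathcal{C}}[\|x_{i,k}-\hat{x}_{i,k}\|^2]\le C'\gamma^{2k}$, iterating gives
\begin{align*}
	\mathbf{E}_{\mathcal{C}}[U_k]
	\le \gamma_0^k\,\mathbf{E}_{\mathcal{C}}[U_0]
	+\sum_{t=0}^{k-1}\gamma_0^{k-1-t}D_t,
\end{align*}
and since $D_t$ decays geometrically with ratio $\gamma^2$, the convolution sum is bounded by a constant times $\max\{\gamma_0,\gamma^2\}^{k}$ (with an extra factor $k$ in the borderline case $\gamma_0=\gamma^2$, which is harmless up to enlarging the rate slightly). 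Because $\gamma\in(0,1)$ is a free parameter, one may simply take $\gamma^2\le\gamma_0$ so that the compression error never slows the geometric rate, yielding $\mathbf{E}_{\mathcal{C}}[U_k]=\mathcal{O}((1-\tilde\epsilon)^k)$; since $U_k$ upper bounds the left-hand side of \eqref{nonconvex:thm-ft-equ1_quantization} up to a constant, \eqref{nonconvex:thm-ft-equ1_quantization} follows. I expect the main obstacle to be the second step: verifying that a single $\tilde\epsilon$ can be chosen to make the full $U_k$ (including the auxiliary $\hat{x}$ and dual $v$ terms, not just the consensus and optimality parts) contract, since the P--{\L} inequality only directly controls the optimality gap through the gradient term, and the remaining Lyapunov components must be shown to contract through the coupling already built into the descent inequality rather than through any new convexity argument.
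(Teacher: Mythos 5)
Your proposal is correct and follows essentially the same route as the paper's proof in Appendix~\ref{nonconvex:proof-thm-ft_quantization}: reuse the one-step descent inequality for $U_k$ from the proof of Theorem~\ref{nonconvex:thm-sm_quantization}, apply the P--{\L} bound $\|\bar{\bsg}_k^0\|^2\ge 2\nu n(f(\bar{x}_k)-f^*)$, convert the negative terms into a fixed fraction of $U_k$ via $U_k\le\epsilon_{11}\hat{U}_k$ (which is exactly how the paper resolves the ``obstacle'' you flag --- the dual and cross terms are absorbed by that bound together with the negative dual term already present in the descent inequality), and unroll the recursion $\mathbf{E}_{\mathcal{C}}[U_{k+1}]\le(1-\tilde{\epsilon}_0)\mathbf{E}_{\mathcal{C}}[U_k]+c_8\gamma^{2k}$, with the paper choosing $\tilde{\epsilon}<\min\{\tilde{\epsilon}_0,1-\gamma^2\}$ to absorb the convolution sum for \emph{arbitrary} $\gamma\in(0,1)$ rather than restricting $\gamma$ as you suggest as a shortcut. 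One minor correction: $U_k$ contains no $\hat{x}$ or compression-error terms (those appear only as the additive disturbance $c_8\gamma^{2k}$), so the contraction concern is limited to the dual and cross terms, which makes the step you worried about easier than anticipated.
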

\begin{proof}
	This proof is based on the proof of Theorem~\ref{nonconvex:thm-sm_quantization}. From the P--{\L} condition, we know that $\mathbf{E}_{\mathcal{C}}[\|\nabla f(\bar{x}_k)\|^2]$ can be lower bounded by $\mathbf{E}_{\mathcal{C}}[f(\bar{x}_k)-f^*]$, which further implies the difference $\mathbf{E}_{\mathcal{C}}[U_{k+1}-U_k]$ can be lower bounded by $\mathbf{E}_{\mathcal{C}}[U_k-\sum_{i=1}^{n}\|x_{i,k}-\hat{x}_{i,k}\|^2]$. Then, combining this, \eqref{nonconvex:xminusxhat_quantization2}, and $s_k=s_0\gamma^k$, we can get that $\mathbf{E}_{\mathcal{C}}[U_k]$ exponentially decreases to zero. Thus, \eqref{nonconvex:thm-ft-equ1_quantization} holds.
	Due to space limitations, the explicit expression of the right-hand side of \eqref{nonconvex:thm-ft-equ1_quantization} and the detailed proof are given in Appendix~\ref{nonconvex:proof-thm-ft_quantization}.
\end{proof}

Compared to \cite{khirirat2020compressed} which used the same class of compressors satisfying Assumption~\ref{nonconvex:ass:compression_quantization}, Theorem~\ref{nonconvex:thm-ft_quantization} shows that a global optimum can be precisely found with a linear convergence rate under the P--{\L} condition. In contrast, although \cite{khirirat2020compressed} assumed the stronger strong convexity assumption and also showed that convergence rate is linear, the parallel algorithms proposed in \cite{khirirat2020compressed} only converged to a neighbor of the unique optimal point.

We also have the following linear convergence result for Algorithm~\ref{nonconvex:algorithm-pdgd_determin}  when the class of compressors satisfying Assumption~\ref{nonconvex:ass:compression_determin} is used.
\begin{theorem}\label{nonconvex:thm-ft_determin}
	Suppose that Assumptions~\ref{nonconvex:ass:compression_determin}--\ref{nonconvex:ass:fil} hold and a lower bound on  the P--{\L} constant $\nu$ is known in advance. Let $\{x_{i,k}\}$ be the sequence generated by Algorithm~\ref{nonconvex:algorithm-pdgd_determin} with $\alpha=\kappa_1\beta$, $\beta>\kappa_2$, $\eta\in(0,\hat{\kappa}_3)$, and $s_k=s_0\gamma^k$, where $\kappa_1$ and $\kappa_2$ are positive constants given in Appendix~\ref{nonconvex:proof-thm-sm}, $\hat{\kappa}_3$, $s_0$, and $\gamma$ are positive constants given in Appendix~\ref{nonconvex:proof-thm-ft_determin} with $\gamma\in(0,1)$. Then, for any $k\in\mathbb{N}_0$,
	\begin{align}\label{nonconvex:thm-ft-equ1_determin}
		\sum_{i=1}^{n}(\|x_{i,k}-\bar{x}_k\|^2+f(\bar{x}_k)-f^*)
		=\mathcal{O}(\gamma^{k}).
	\end{align}
\end{theorem}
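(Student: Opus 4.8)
The plan is to mirror the Lyapunov argument behind Theorem~\ref{nonconvex:thm-ft_quantization}, but to carry everything through as a \emph{sure} (deterministic) bound, since Assumption~\ref{nonconvex:ass:compression_determin} provides a pointwise guarantee rather than a bound in expectation; this is why no $\mathbf{E}_{\mathcal{C}}[\cdot]$ appears in \eqref{nonconvex:thm-ft-equ1_determin}. First I would record the structural reduction already noted for Algorithm~\ref{nonconvex:algorithm-pdgd_determin}: the induction identity $y_{i,k}=\hat{x}_{i,k}-\sum_{j=1}^{n}L_{ij}\hat{x}_{j,k}$ lets me rewrite \eqref{nonconvex:kia-algo-dc-x_determin}--\eqref{nonconvex:kia-algo-dc-v_determin} as the compressed primal--dual recursion \eqref{nonconvex:kia-algo-dc-x-compress}--\eqref{nonconvex:kia-algo-dc-v-compress} with $\hat{x}_{j,k}$ in place of $x_{j,k}$. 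Then the only quantity coupling the ideal dynamics \eqref{nonconvex:yi-tac-alg} to the compressed ones is the per-agent discrepancy $x_{i,k}-\hat{x}_{i,k}$, exactly as in the quantization analysis. The same non-negative potential $U_k$ used in the proofs of Theorems~\ref{nonconvex:thm-sm_quantization} and \ref{nonconvex:thm-ft_quantization} (combining the consensus error $\sum_i\|x_{i,k}-\bar{x}_k\|^2$, the optimization error $f(\bar{x}_k)-f^*$, and a dual-variable term) will serve here, with all expectations deleted.

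The second step is the deterministic compression-error bound, the analog of \eqref{nonconvex:xminusxhat_quantization2}. From \eqref{nonconvex:kia-algo-dc-xhat_determin} and \eqref{nonconvex:kia-algo-dc-q_determin}, $x_{i,k}-\hat{x}_{i,k}=s_k\big((x_{i,k}-\hat{x}_{i,k-1})/s_k-\mathcal{C}((x_{i,k}-\hat{x}_{i,k-1})/s_k)\big)$, so \emph{provided the argument lies in the unit ball}, i.e. $\|(x_{i,k}-\hat{x}_{i,k-1})/s_k\|_p\le1$, Assumption~\ref{nonconvex:ass:compression_determin} (inequality \eqref{nonconvex:ass:compression_equ_determin}) yields $\|x_{i,k}-\hat{x}_{i,k}\|_p\le(1-\varphi)s_k=(1-\varphi)s_0\gamma^{k}$. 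Feeding this geometrically decaying discrepancy into the one-step estimate for $U_k$, and invoking the P--{\L} condition \eqref{nonconvex:equ:plc} to lower bound $\|\nabla f(\bar{x}_k)\|^2$ by $2\nu(f(\bar{x}_k)-f^*)$, I would obtain a contraction of the form
\begin{align}
	U_{k+1}\le(1-\hat{\epsilon})U_k+c_0(1-\varphi)^2s_0^2\gamma^{2k}\nonumber
\end{align}
for some $\hat{\epsilon}\in(0,1)$ determined by the algorithm parameters and $\nu$. Solving this scalar recursion gives $U_k=\mathcal{O}(\max\{(1-\hat{\epsilon})^k,\gamma^{2k}\})$, which is $\mathcal{O}(\gamma^{2k})$, hence $\mathcal{O}(\gamma^{k})$, once the contraction factor satisfies $1-\hat{\epsilon}\le\gamma^2$ (a condition I return to below); then \eqref{nonconvex:thm-ft-equ1_determin} follows because its left-hand side is dominated by $U_k$.

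The crux --- and the main obstacle --- is that the bound in the previous paragraph is valid only while the compressor argument stays inside the unit ball, and Assumption~\ref{nonconvex:ass:compression_determin} says nothing outside it. I would therefore close the argument by a simultaneous induction that proves, for all $k$, both (i) the potential estimate $U_k\le C\gamma^{2k}$ and (ii) the invariant $\|(x_{i,k+1}-\hat{x}_{i,k})/s_{k+1}\|_p\le1$ for every $i$. For (ii) I would split $x_{i,k+1}-\hat{x}_{i,k}=(x_{i,k+1}-x_{i,k})+(x_{i,k}-\hat{x}_{i,k})$: the second term is bounded by $(1-\varphi)s_k$ from (i) at step $k$, while the increment $x_{i,k+1}-x_{i,k}$ is read off \eqref{nonconvex:kia-algo-dc-x_determin} and controlled by $\sqrt{U_k}=\mathcal{O}(\gamma^{k})$ together with the decaying discrepancy. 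Requiring the sum of these to stay below $s_{k+1}=s_0\gamma^{k+1}$ forces $\gamma>1-\varphi$ and forces the contraction rate to obey $1-\hat{\epsilon}\le\gamma^2$; since $\hat{\epsilon}$ depends on the P--{\L} constant $\nu$, this is precisely where a known lower bound on $\nu$ is needed, namely to certify that $\gamma\in(0,1)$ and $s_0$ can be chosen so that this window is nonempty and the invariant self-propagates. Once (i)--(ii) are established jointly, the recursion above holds for every $k$ and the theorem follows.
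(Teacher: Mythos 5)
Your proposal matches the paper's own proof essentially step for step: the paper works with the same deterministic potential $U_k$, the same local compression bound $\|x_{i,k}-\hat{x}_{i,k}\|_p\le(1-\varphi)s_k$ valid only on the unit ball, and closes the argument by exactly the simultaneous induction you describe — its induction hypothesis \eqref{nonconvex:Ukandx} is $U_k\le\kappa_7 s_k^2$ together with $\max_{i\in[n]}\|x_{i,k}-\hat{x}_{i,k-1}\|_p^2\le s_k^2$, with the known lower bound on $\nu$ entering the design of $s_0$ and $\gamma$ through $\kappa_8$, $\kappa_9$, and $\kappa_{11}$, just as you anticipate. The only cosmetic difference is that the paper folds your scalar-recursion step directly into the induction via the requirement $\gamma\ge\max\{\sqrt{\kappa_{11}},\sqrt{\kappa_{12}}\}$, which also sidesteps the borderline case $1-\hat{\epsilon}=\gamma^2$ where your recursion solution would pick up an extra (harmless) factor of $k$.
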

\begin{proof}
	This proof also uses the non-negative function $U_k$ used in the proof of Theorem~\ref{nonconvex:thm-sm_quantization}. Note that the inequality \eqref{nonconvex:ass:compression_equ_determin} in Assumption~\ref{nonconvex:ass:compression_determin} only holds locally. We use  mathematical induction to prove that $\max_{i\in[n]}\|(x_{i,k}-\hat{x}_{i,k-1})/s_k\|_p^2\le 1$ and $U_k/s_k^2$ is globally bounded. Thus, \eqref{nonconvex:thm-ft-equ1_determin} holds.
	Due to space limitations, the explicit expression of the right-hand side of \eqref{nonconvex:thm-ft-equ1_determin} and the detailed proof are given in Appendix~\ref{nonconvex:proof-thm-ft_determin}.
\end{proof}

We have several remarks on Theorem~\ref{nonconvex:thm-ft_determin}. Firstly, compared to Theorems~\ref{nonconvex:thm-ft}, \ref{nonconvex:thm-ft_ef}, and \ref{nonconvex:thm-ft_quantization},  Theorem~\ref{nonconvex:thm-ft_determin} needs a lower bound on the P--{\L} constant $\nu$ to be known in advance, which is used to design the parameters $s_0$ and $\gamma$ as shown in Appendix~\ref{nonconvex:proof-thm-ft_determin}. This is a potential drawback since this constant is normally unknown due to the difficulty to check the P--{\L} condition. However, for strongly convex cost functions, this is not a drawback since if a function is strongly convex with convex parameter $\nu$, then it also satisfies the P--{\L} condition with the same constant $\nu$. Secondly, linear convergence has also been established in \cite{zhang2021innovation} which used the same type of compressors. However, \cite{zhang2021innovation} assumed that each local cost function is strongly convex, which is stronger than the condition that the global cost function satisfies the P--{\L} condition as used in Theorem~\ref{nonconvex:thm-ft_determin}, and required that the absolute compression error satisfies an inequality determined by the number of agents and the communication network, which is not needed in Theorem~\ref{nonconvex:thm-ft_determin}. Moreover, \cite{zhang2021innovation} required an unpractical condition that the unique optimal point needs to be known a priori to design algorithm parameters, which is a drawback. 
Thirdly, compared to \cite{lee2018finite,magnusson2020maintaining,kajiyama2020linear,xiong2021quantized} which used the standard uniform quantizer with dynamic quantization level and established linear convergence under the condition that each local cost function is strongly convex, we use the more general compressors and the weaker P--{\L} condition to show linear convergence. Finally, compared to \cite{lei2020distributed} which used the standard uniform quantizer with fixed quantization level and established linear convergence under the assumption that each local cost function is quadratic and the global cost function is strongly convex, we not only use the more general compressors but also consider the more general nonconvex functions satisfying the weaker P--{\L} condition.

To end this section, we would like to clarify that although this paper considers three different general classes of compressors, it is not this paper's goal to study which specific compressor or general class of compressors has better performance. Moreover, although this paper proposes three communication-efficient distributed algorithms, it is not this paper's goal either to investigate which algorithm has better performance.

\section{Simulations}\label{nonconvex:sec-simulation}
In this section, we verify and illustrate the theoretical results through numerical simulations.
We consider the nonconvex distributed binary classification problem as studied in \cite{sun2019distributed,yi2021linear,xin2021improved}, which is formulated as the optimization problem \eqref{nonconvex:eqn:xopt} with each component function $f_i$ being given by
\begin{align*}
f_i(x)&=\frac{n}{m}\sum_{l=1}^{m_i}\Big((1-y_{il})\log\Big(1+e^{x^\top z_{il}}\Big)
+y_{il}\log\Big(1+e^{-x^\top z_{il}}\Big)\Big)+\sum_{s=1}^{d}\frac{\lambda\mu [x]_s^2}{1+\mu [x]_s^2},
\end{align*}
where $m=\sum_{i=1}^{n}m_i$, $m_i$ is the number of observations held privately by agent $i$, $z_{il}\in\mathbb{R}^p$ is the $l$-th observation with label  $y_{il}\in\{0,1\}$ owned by agent $i$, $\lambda$ and $\mu$ are regularization parameters, and $[x]_s$ is the $s$-th coordinate of $x\in \mathbb{R}^d$. All settings for cost functions and the communication graph are the same as those described in \cite{sun2019distributed,yi2021linear}. Specifically, $n=20$, $d=50$, $m_i=200$, $\lambda=0.001$, and $\mu=1$. The graph used in the simulation is the random geometric graph and the graph parameter is set to be $0.5$. We independently and randomly generate $m$ data points.

We consider the following five compressors:
\begin{itemize}
  \item Unbiased $l$-bits quantizer \cite{liu2021linear}
      \begin{align*}
      \mathcal{C}_1(x)=\frac{\|x\|_\infty}{2^{l-1}}\sign(x)\circ
      \bigg\lfloor\frac{2^{l-1}\abs{x}}{\|x\|_\infty}+\varpi\bigg\rfloor,
      \end{align*}
      where $\sign(\cdot)$, $\abs{\cdot}$, and $\lfloor\cdot\rfloor$ are the element-wise sign, absolute, and floor functions, respectively, $\circ$ denotes the Hadamard product, and $\varpi$ is a random perturbation vector uniformly sampled from $[0,1]^d$. This compressor is unbiased and satisfies Assumption~\ref{nonconvex:ass:compression} with $r=1+r_1$, $\varphi=1/(1+r_1)$, and $r_1=d/4^{l}$. As pointed out in \cite{zhang2021innovation}, transmitting $\mathcal{C}_1(x)$ needs $(l+1)d+b_1$ bits if a scalar can be transmitted with $b_1$ bits with sufficient precision, since only $\|x\|_\infty$, $\sign(x)$, and the positive integer in the bracket need to be transmitted. In this section, we choose $l=2$ and $b_1=64$.

  \item Greedy (Top-$k$) sparsifier \cite{beznosikov2020biased}
  \begin{align*}
  \mathcal{C}_2(x)=\sum_{s=1}^{k}[x]_{i_s}{\bf e}_{i_s},
  \end{align*}
  where $\{{\bf e}_1,\dots,{\bf e}_d\}$ is the standard basis of $\mathbb{R}^d$ and $i_1,\dots,i_k$ are the indices of largest $k$ coordinates in magnitude of $x$. This compressor is biased but contractive. Moreover, it satisfies Assumption~\ref{nonconvex:ass:compression} with $r=1$ and $\varphi=k/d$. Therefore, it also satisfies Assumption~\ref{nonconvex:ass:compression_determin} with $p\ge2$ and $\varphi=k/d$. Transmitting $\mathcal{C}_2(x)$ needs $kb_1$ bits since only $k$ scalars need to be transmitted. In this section, we choose $k=10$.

  \item Norm-sign compressor
  \begin{align*}
  \mathcal{C}_3(x)=\frac{\|x\|_\infty}{2}\sign(x).
  \end{align*}
  This compressor is biased and non-contractive, but satisfies Assumption~\ref{nonconvex:ass:compression} with $r=d/2$ and $\varphi=1/d^2$, see \cite{li2021compressed}. It it also satisfies Assumption~\ref{nonconvex:ass:compression_determin} with $p=\infty$ and $\varphi=0.5$. Transmitting $\mathcal{C}_3(x)$ needs $2d+b_1$ bits since only $\|x\|_\infty$ and $\sign(x)$ need to be transmitted.

  \item Standard uniform quantizer
  \begin{align*}
  \mathcal{C}_4(x)=\Delta\Big\lfloor\frac{x}{\Delta}+\frac{{\bf 1}_d}{2}\Big\rfloor,
  \end{align*}
  where $\Delta$ is a positive integer. This compressor satisfies Assumption~\ref{nonconvex:ass:compression_quantization} with $p=\infty$ and $C=\Delta^2/4$. Therefore, it also satisfies Assumption~\ref{nonconvex:ass:compression_determin} with $p=\infty$ and $\varphi=0.5$ when $\Delta=1$. Transmitting $\mathcal{C}_4(x)$ needs $db_2$ bits if $b_2$ bits are allocated to transmit an integer. In this section, we choose $\Delta=1$ and $b_2=8$.

  \item 1-bit binary quantizer \cite{zhang2021innovation}
  \begin{align*}
  \mathcal{C}_5(x)=\col(Q_1([x]_1),\dots,Q_1([x]_d)),
  \end{align*}
  where $Q_1([x]_s)=0.5$ for $[x]_s\ge0$ and $Q_1([x]_s)=-0.5$ otherwise.
  This compressor satisfies Assumption~\ref{nonconvex:ass:compression_determin} with $p=\infty$ and $\varphi=0.5$. Transmitting $\mathcal{C}_5(x)$ needs $d$ bits since for each  coordinate only two symbols needs to be transmitted.
\end{itemize}

We implement Algorithm~\ref{nonconvex:algorithm-pdgd} using $\mathcal{C}_1$--$\mathcal{C}_3$, Algorithm~\ref{nonconvex:algorithm-pdgd_ef} using $\mathcal{C}_2$ and $\mathcal{C}_3$, and Algorithm~\ref{nonconvex:algorithm-pdgd_determin} using $\mathcal{C}_2$--$\mathcal{C}_5$.
Note that to the best of our knowledge in the literature there are no other similar communication-efficient distributed algorithms for distributed nonconvex optimization as ours. Therefore, we only compare the proposed communication-efficient distributed algorithms with their uncompressed counterpart, i.e., the distributed primal--dual algorithm  \eqref{nonconvex:yi-tac-alg}, which is denoted as DPDA. It is straightforward to see that each agent sends $db_1$ bits per iteration when implementing DPDA. All the hyper-parameters used in the experiment are tuned manually and given in TABLE~\ref{tab:para}.

\bgroup
\def\arraystretch{1.7}
\begin{table*}[ht!]
	\caption{Parameter settings for different algorithm and compressor combinations.}
	\label{tab:para}
	\centering
	\small
	\begin{tabular}{M{2.1cm}|M{2.1cm}|M{1.3cm}|M{1.3cm}|M{1.3cm}|M{1.3cm}|M{1.3cm}|M{1.3cm}|M{1.3cm}N}
		\hline
		Algorithm  &Compressor &$\alpha$ &$\beta$ &$\eta$ &$\psi$ &$\sigma$ &$s_0$ &$\gamma$ &\\
		
		\hline
		DPDA      &---  &85 &5 &1.4 &--- &--- &--- &---  &\\
		
		\hline
		Algorithm~\ref{nonconvex:algorithm-pdgd}    & $\mathcal{C}_1$     &85 &5 &1.4 &0.2 &--- &--- &---  & \\
		
		\hline
		Algorithm~\ref{nonconvex:algorithm-pdgd}   & $\mathcal{C}_2$      &85 &5 &1.4 &0.05 &--- &--- &---   &\\
		
		\hline
		Algorithm~\ref{nonconvex:algorithm-pdgd}      & $\mathcal{C}_3$  &85 &5 &1.3 &0.05 &--- &--- &---  &\\
		
		\hline
		Algorithm~\ref{nonconvex:algorithm-pdgd_ef}        & $\mathcal{C}_2$  &85 &5 &1.4 &0.05 &0.03 &--- &---   &\\
		
		\hline
		Algorithm~\ref{nonconvex:algorithm-pdgd_ef}     & $\mathcal{C}_3$  &85 &5 &1.3 &0.05 &0.03 &--- &---   &\\
		
		\hline
		Algorithm~\ref{nonconvex:algorithm-pdgd_determin}      & $\mathcal{C}_2$  &85 &5 &0.46 &--- &---  &1 &0.99  &\\
		
		\hline
		Algorithm~\ref{nonconvex:algorithm-pdgd_determin}       & $\mathcal{C}_3$  &85 &5 &0.64 &--- &---  &1 &0.99 &\\
		
		\hline
		Algorithm~\ref{nonconvex:algorithm-pdgd_determin}       & $\mathcal{C}_4$  &85 &5 &0.46 &--- &---  &0.01 &0.99 &\\
		
		\hline
		Algorithm~\ref{nonconvex:algorithm-pdgd_determin}       & $\mathcal{C}_5$  &85 &5 &0.46 &--- &---  &1 &0.99 &\\
		
		\hline
	\end{tabular}
\end{table*}
\egroup

\begin{figure}[!ht]
\centering
  \includegraphics[width=0.8\textwidth]{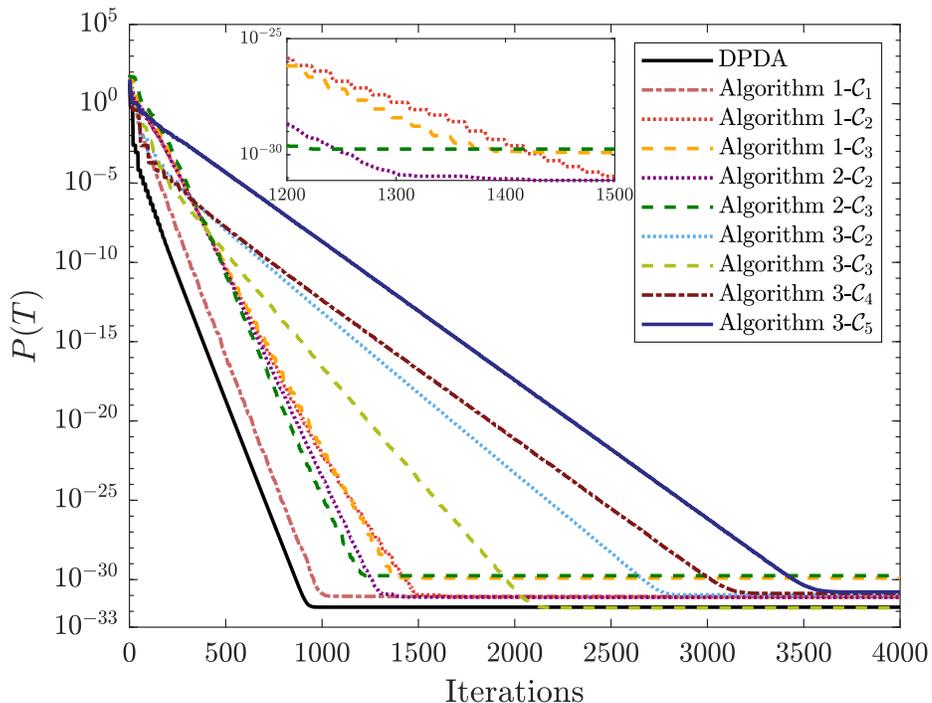}
  \caption{Evolutions of $P(T)$ with respect to the number of iterations.}
  \label{nonconvex:fig:iterations}
\end{figure}

\begin{figure}[!ht]
\centering
  \includegraphics[width=0.8\textwidth]{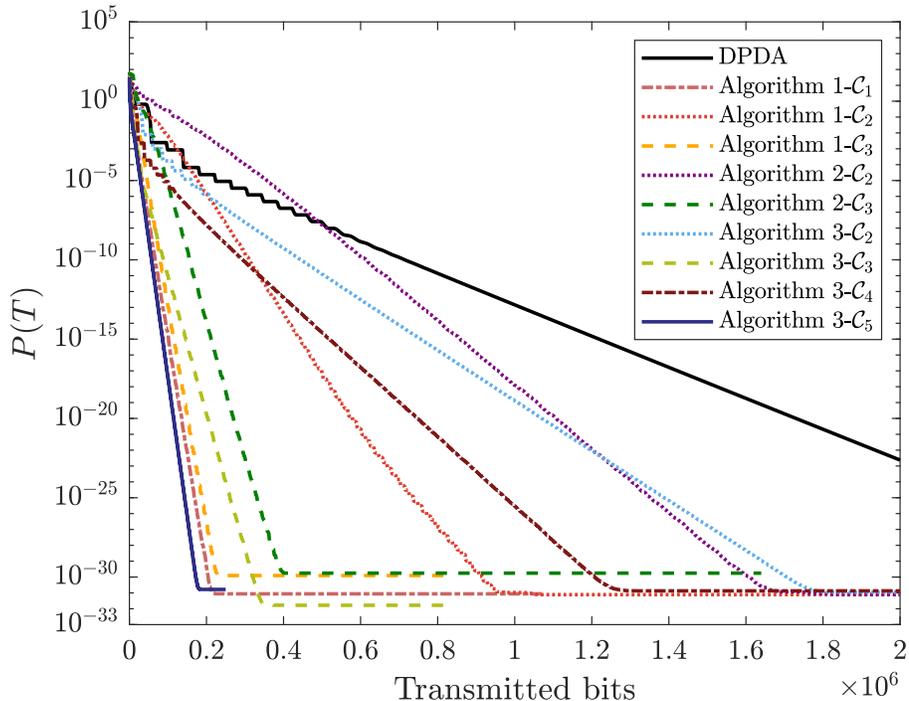}
  \caption{Evolutions of $P(T)$ with respect to the number of transmitted bits.}
  \label{nonconvex:fig:bits}
\end{figure}

\begin{figure}[!ht]
\centering
  \includegraphics[width=0.8\textwidth]{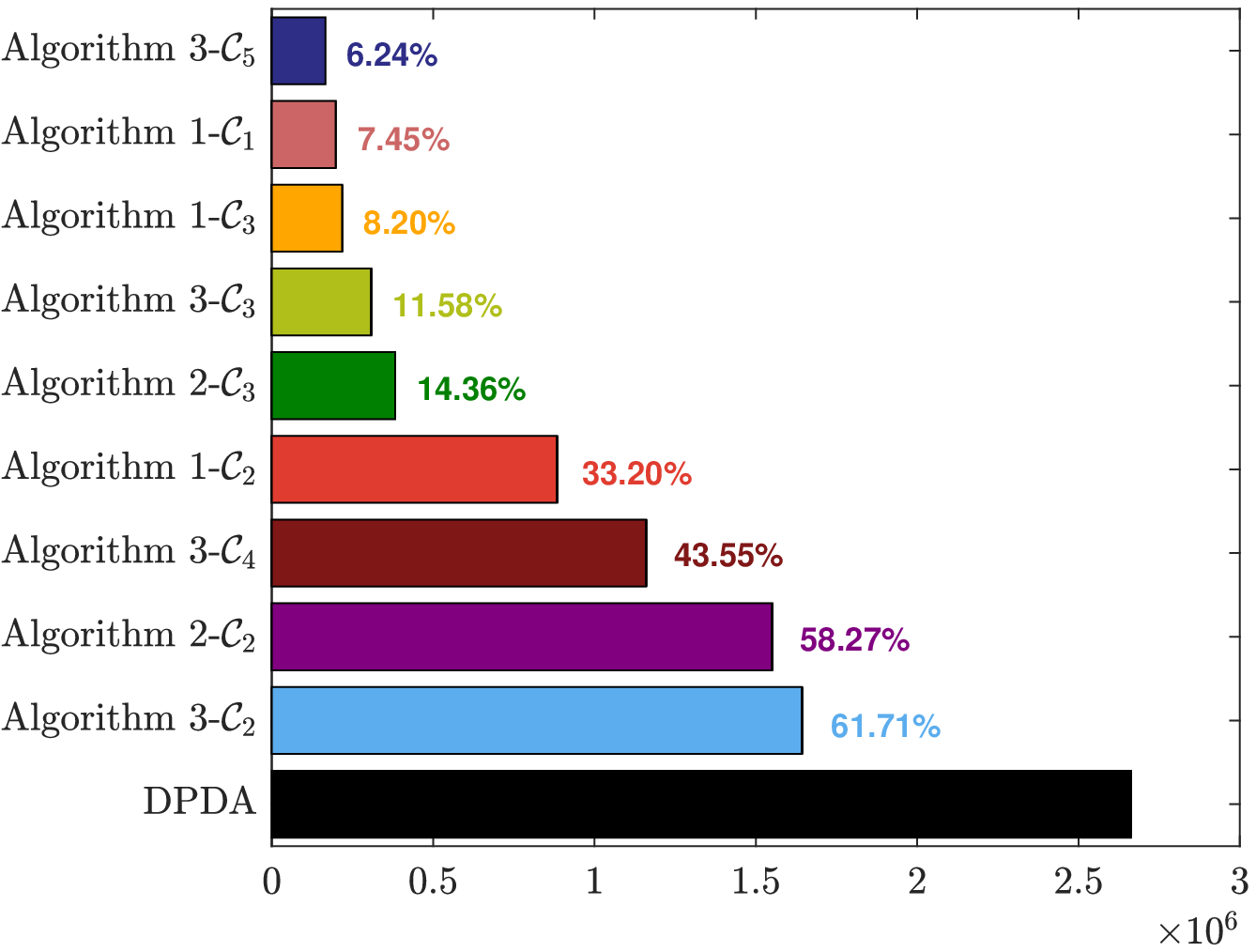}
  \caption{Transmitted bits for different algorithm and compressor combinations to reach $P(T)\le 10^{-30}$.}
  \label{nonconvex:fig:bits_final}
\end{figure}

We use
\begin{align*}
P(T)=\min_{k\in [T]}\Big\{\|\nabla f(\bar{x}_k)\|^ {2}+\frac{1}{n}\sum_{i=1}^{n}\|x_{i,k}-\bar{x}_k\|^ {2}\Big\}
\end{align*}
to measure the performance of each algorithm.
We plot the convergence of $P(T)$ with respect to both number of iterations and bits transmitted between two neighbor agents for the above algorithm and compressor combinations with the same initial condition, as shown in Fig.~\ref{nonconvex:fig:iterations} and Fig.~\ref{nonconvex:fig:bits}, respectively. Moreover, the comparison of transmitted bits for different algorithm and compressor combinations to reach $P(T)\le 10^{-30}$ is provided in Fig.~\ref{nonconvex:fig:bits_final}. We highlight the following observations:
\begin{itemize}
  \item From Fig.~\ref{nonconvex:fig:iterations} we can see that all of the algorithm and compressor combinations have comparable convergence speeds as the corresponding algorithm with exact communication, i.e., DPDA, which is consistent with our theoretical results. Especially, Algorithm~\ref{nonconvex:algorithm-pdgd}-$\mathcal{C}_1$ and DPDA have almost the same convergence speed .

  \item 
      From Fig.~\ref{nonconvex:fig:iterations} we can also see that Algorithm~\ref{nonconvex:algorithm-pdgd}-$\mathcal{C}_3$ (Algorithm~\ref{nonconvex:algorithm-pdgd_ef}-$\mathcal{C}_3$) has almost the same convergence speed as Algorithm~\ref{nonconvex:algorithm-pdgd}-$\mathcal{C}_2$ (Algorithm~\ref{nonconvex:algorithm-pdgd_ef}-$\mathcal{C}_2$), and Algorithm~\ref{nonconvex:algorithm-pdgd_determin}-$\mathcal{C}_3$ has faster convergence speed than Algorithm~\ref{nonconvex:algorithm-pdgd_determin}-$\mathcal{C}_2$. Therefore, non-contractive compressors, e.g., $\mathcal{C}_3$, can converge faster than contractive compressors, e.g., $\mathcal{C}_2$.

  \item 
      From the zoomed figure in Fig.~\ref{nonconvex:fig:iterations} we can see that the error feedback method Algorithm~\ref{nonconvex:algorithm-pdgd_ef}-$\mathcal{C}_2$ (Algorithm~\ref{nonconvex:algorithm-pdgd_ef}-$\mathcal{C}_3$) has faster convergence speed than Algorithm~\ref{nonconvex:algorithm-pdgd}-$\mathcal{C}_2$
      (Algorithm~\ref{nonconvex:algorithm-pdgd}-$\mathcal{C}_3$), which demonstrates the benefit of using the error feedback to correct the bias induced by the biased compressors.

  \item  From Fig.~\ref{nonconvex:fig:bits} we can see that our communication-efficient algorithms converge faster than their exact-communication counterpart when comparing their performances based on the number of bits that each agents communicates, which shows the effectiveness of our proposed algorithms. Especially, Algorithm~\ref{nonconvex:algorithm-pdgd_determin}-$\mathcal{C}_5$, Algorithm~\ref{nonconvex:algorithm-pdgd}-$\mathcal{C}_1$, Algorithm~\ref{nonconvex:algorithm-pdgd}-$\mathcal{C}_3$, Algorithm~\ref{nonconvex:algorithm-pdgd_determin}-$\mathcal{C}_3$, and Algorithm~\ref{nonconvex:algorithm-pdgd_ef}-$\mathcal{C}_3$ converge significantly faster than DPDA. For example, it is illustrated in Fig.~\ref{nonconvex:fig:bits_final} that Algorithm~\ref{nonconvex:algorithm-pdgd_determin}-$\mathcal{C}_5$ only needs $6.24\%$ of the bits used by DPDA to reach a specific level of error.

  \item  From Fig.~\ref{nonconvex:fig:bits} we can also see that Algorithm~\ref{nonconvex:algorithm-pdgd}-$\mathcal{C}_2$ (Algorithm~\ref{nonconvex:algorithm-pdgd}-$\mathcal{C}_3$) converges faster than its error feedback version, i.e., Algorithm~\ref{nonconvex:algorithm-pdgd_ef}-$\mathcal{C}_2$ (Algorithm~\ref{nonconvex:algorithm-pdgd_ef}-$\mathcal{C}_3$) when comparing their performances based on the number of transmitted bits, which reveals the drawback of using the error feedback to correct the bias induced by the biased compressors.

\end{itemize}

\section{Conclusions}\label{nonconvex:sec-conclusion}
In this paper, we studied communication compression for distributed nonconvex optimization. We used three general classes of compressors to design three communication-efficient distributed primal--dual algorithms. We showed that the proposed algorithms can achieve comparable convergence results to state-of-the-art algorithms although the communication is compressed. Interesting directions for future work include considering more general network topologies, reducing communication complexity through periodic communication,  exploring rules for choosing an appropriate compressor for high efficiency, and studying how the important parameters, such as compressor parameters, Lipschitz constant, network connectivity, affect the convergence rate.

\appendix

\subsection{Useful Lemmas}\label{zero:app-lemmas}

The following results are used in the proofs.
\begin{lemma}\label{nonconvex:lemma:pnorm}
	(Equation (5.4.21) on page 333 in \cite{horn2012matrix}.)
	For any $x\in\mathbb{R}^d$, it holds that $\|x\|_p\le\hat{d}\|x\|$ and $\|x\|\le \tilde{d}\|x\|_p$, where $\hat{d}=d^{\frac{1}{p}-\frac{1}{2}}$ and $\tilde{d}=1$ when $p\in[1,2]$, and $\hat{d}=1$ and $\tilde{d}=d^{\frac{1}{2}-\frac{1}{p}}$ when $p>2$.
\end{lemma}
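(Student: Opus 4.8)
The plan is to reduce both inequalities to the two classical relations between $\ell^p$ norms applied with the reference exponent $2$, and then to split into the cases $p\in[1,2]$ and $p>2$ according to which norm dominates. Concretely, I would first establish, for any exponents $0<p\le q$, the monotonicity estimate $\|x\|_q\le\|x\|_p$ and the reverse estimate $\|x\|_p\le d^{1/p-1/q}\|x\|_q$, and then specialize with $q=2$ (when $p\le2$) or with the pair $(2,p)$ in the role of $(p,q)$ (when $p>2$).

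First I would record the monotonicity fact $\|x\|_q\le\|x\|_p$ for $q\ge p$. This follows by homogeneity: normalizing so that $\|x\|_p=1$ forces $|x_i|\le1$ for every coordinate, whence $|x_i|^q\le|x_i|^p$ because $q\ge p$; summing over $i$ gives $\|x\|_q^q\le\|x\|_p^p=1$, so $\|x\|_q\le1=\|x\|_p$. Next I would establish the reverse inequality $\|x\|_p\le d^{1/p-1/q}\|x\|_q$, which is the one requiring a genuine estimate. The key step is to apply H\"older's inequality to the product $\sum_{i=1}^d|x_i|^p\cdot1$ with the conjugate exponents $q/p$ and $q/(q-p)$, yielding $\sum_i|x_i|^p\le\big(\sum_i|x_i|^q\big)^{p/q}d^{\,1-p/q}$; taking the $p$-th root produces the constant $d^{1/p-1/q}$.

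With these two relations in hand, the lemma follows by specialization. When $p\in[1,2]$ I take $q=2$: monotonicity gives $\|x\|\le\|x\|_p$, i.e.\ $\tilde{d}=1$, while the reverse inequality gives $\|x\|_p\le d^{1/p-1/2}\|x\|$, i.e.\ $\hat{d}=d^{1/p-1/2}$. When $p>2$ I instead let $2$ be the smaller exponent and $p$ the larger one: monotonicity gives $\|x\|_p\le\|x\|$, i.e.\ $\hat{d}=1$, while the reverse inequality gives $\|x\|\le d^{1/2-1/p}\|x\|_p$, i.e.\ $\tilde{d}=d^{1/2-1/p}$. This matches the claimed constants in both cases.

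The argument is otherwise routine, being a textbook fact; the only place demanding care is the bookkeeping of the H\"older exponents and verifying that the power $1-p/q$ on $d$ converts correctly to $1/p-1/q$ after extracting the $p$-th root, together with keeping straight which of $p$ and $2$ plays the role of the smaller exponent in each of the two cases. I expect this index-tracking to be the main (and only real) obstacle.
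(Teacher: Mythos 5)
Your proof is correct. Note, though, that the paper itself offers no proof of this lemma at all: it is stated as a quotation of Equation (5.4.21) in Horn and Johnson's \emph{Matrix Analysis}, so there is no internal argument to compare against. Your two ingredients are exactly the standard ones behind that textbook fact: the monotonicity $\|x\|_q\le\|x\|_p$ for $q\ge p$ (via the normalization $\|x\|_p=1$, which forces $|x_i|\le 1$), and the reverse bound $\|x\|_p\le d^{1/p-1/q}\|x\|_q$ via H\"older with conjugate exponents $q/p$ and $q/(q-p)$. The exponent bookkeeping you flagged as the only delicate point does check out: H\"older gives $\sum_i|x_i|^p\le\big(\sum_i|x_i|^q\big)^{p/q}d^{1-p/q}$, and raising to the power $1/p$ turns $d^{1-p/q}$ into $d^{1/p-1/q}$, which is $d^{1/p-1/2}$ in the case $p\in[1,2]$ (with $q=2$) and $d^{1/2-1/p}$ in the case $p>2$ (with the roles of $2$ and $p$ swapped). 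Both specializations reproduce the constants $\hat{d}$ and $\tilde{d}$ exactly as stated, so your argument is a complete, self-contained replacement for the external citation.
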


\begin{lemma}\label{nonconvex:lemma:inequality-arithmetic}
	For any $x,y\in\mathbb{R}^d$ and $a,b>0$ satisfying $ab=\frac{1}{4}$, it holds that \begin{align}\label{nonconvex:lemma:inequality-arithmetic-equ}
		x\top y\le a\|x\|^2+b\|y\|^2.
	\end{align}
\end{lemma}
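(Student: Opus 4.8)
The plan is to recognize this as a weighted Young's inequality and to prove it by a single completion of squares. First I would use the constraint $ab=\tfrac14$ to eliminate $b$, writing $b=\tfrac{1}{4a}$, which is legitimate since $a>0$. The whole claim then reduces to showing that the scalar quantity $a\|x\|^2 - x^{\top}y + \tfrac{1}{4a}\|y\|^2$ is nonnegative for all $x,y\in\mathbb{R}^d$.

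The key step is to observe that this quantity is itself the square of a vector norm. Specifically, I would compute
\begin{align*}
\Big\|\sqrt{a}\,x - \frac{1}{2\sqrt{a}}\,y\Big\|^2 = a\|x\|^2 - x^{\top}y + \frac{1}{4a}\|y\|^2 = a\|x\|^2 - x^{\top}y + b\|y\|^2,
\end{align*}
where the cross term has coefficient $-2\cdot\sqrt{a}\cdot\tfrac{1}{2\sqrt{a}} = -1$ and the final equality again uses $ab=\tfrac14$. Since the left-hand side is a squared norm it is nonnegative, and rearranging gives exactly $x^{\top}y \le a\|x\|^2 + b\|y\|^2$, which is the claim.

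An equivalent route, should one prefer to avoid square roots, is to chain the Cauchy--Schwarz inequality $x^{\top}y \le \|x\|\,\|y\|$ with the scalar AM--GM inequality $2\sqrt{ab}\,\|x\|\,\|y\| \le a\|x\|^2 + b\|y\|^2$; since $ab=\tfrac14$ forces $2\sqrt{ab}=1$, these combine to give the result. There is no genuine obstacle in this lemma---it is elementary---so the only thing to watch is the bookkeeping of the cross-term coefficient, making sure that the constraint $ab=\tfrac14$ (equivalently $2\sqrt{ab}=1$) is precisely what produces the clean coefficient $1$ in front of $x^{\top}y$. One may also note in passing that equality holds exactly when $y=2a\,x$.
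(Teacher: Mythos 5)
Your proposal is correct, and it matches the paper, which dispenses with this lemma in a single line by calling it ``a direct extension of the Cauchy--Schwarz inequality''---precisely your second route (Cauchy--Schwarz plus AM--GM with $2\sqrt{ab}=1$). Your completion-of-squares argument is an equally valid, self-contained way to fill in the details the paper omits.
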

This lemma is a direct extension of the Cauchy--Schwarz inequality.

\begin{lemma}\label{nonconvex:lemma-Xinlei}
	Let $L$ be the Laplacian matrix of an undirected and connected graph $\mathcal{G}$ with $n$ agents and $K_n={\bf I}_n-\frac{1}{n}{\bf 1}_n{\bf 1}^{\top}_n$.
	Then $L$ and $K_n$ are positive semi-definite, $\nullrank(L)=\nullrank(K_n)=\{{\bf 1}_n\}$, $L\le\rho(L){\bf I}_n$, $\rho(K_n)=1$,
	\begin{subequations}
		\begin{align}
			&K_nL=LK_n=L,\label{nonconvex:KL-L-eq}\\
			&0\le\rho_2(L)K_n\le L\le\rho(L)K_n.\label{nonconvex:KL-L-eq2}
		\end{align}
	\end{subequations}
	Moreover, there exists an orthogonal matrix $[r \ R]\in \mathbb{R}^{n \times n}$ with $r=\frac{1}{\sqrt{n}}\mathbf{1}_n$ and $R \in \mathbb{R}^{n\times (n-1)}$ such that
	\begin{subequations}
		\begin{align}
			&L=\left[\begin{array}{ll}r&R
			\end{array}\right]\left[\begin{array}{ll}0&0\\
				0&\Lambda_1
			\end{array}\right]\left[\begin{array}{l}r^\top\\
				R^\top
			\end{array}\right],\label{cdc18so:lemma:LReq}\\
			&PL=LP=K_n,\label{nonconvex:lemma-eq3}\\
			&\rho^{-1}(L){\bf I}_n\leq P\le\rho_2^{-1}(L){\bf I}_n,\label{nonconvex:lemma-eq5}
		\end{align}
	\end{subequations}
	where $\Lambda_1=\diag([\lambda_2,\dots,\lambda_n])$ with $0<\lambda_2\leq\dots\leq\lambda_n$ being the  nonzero eigenvalues of the Laplacian matrix $L$, and
	\begin{align*}
		&P=\left[\begin{array}{ll}r&R
		\end{array}\right]\left[\begin{array}{ll}\lambda_n^{-1}&0\\
			0&\Lambda_1^{-1}
		\end{array}\right]\left[\begin{array}{l}r^\top\\
			R^\top
		\end{array}\right].
	\end{align*}
\end{lemma}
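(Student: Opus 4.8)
The plan is to carry out everything in a single orthonormal eigenbasis of $L$ and then read off each claim from the resulting block structure. Since $\mathcal{G}$ is undirected, $L=L^\top=\Deg-A$ is symmetric and hence admits an orthogonal eigendecomposition. First I would record the quadratic-form identity $x^\top L x=\tfrac12\sum_{i,j}a_{ij}(x_i-x_j)^2\ge0$, which shows $L$ is positive semi-definite, together with $L{\bf 1}_n={\bf 0}_n$, so that $0$ is an eigenvalue with eigenvector ${\bf 1}_n$. The one genuinely structural fact, and the only place where connectivity (Assumption~\ref{nonconvex:ass:graph}) enters, is that this zero eigenvalue is \emph{simple}: if $Lx={\bf 0}_n$ then $x^\top Lx=0$ forces $x_i=x_j$ on every edge, and connectivity propagates this equality across all vertices, so $x\in\{{\bf 1}_n\}$; hence $\nullrank(L)=\{{\bf 1}_n\}$. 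Ordering the eigenvalues $0=\lambda_1<\lambda_2\le\cdots\le\lambda_n$, taking $r=\tfrac1{\sqrt n}{\bf 1}_n$ as the normalized zero-eigenvector and collecting orthonormal eigenvectors for $\lambda_2,\dots,\lambda_n$ into $R$ produces the orthogonal matrix $[r\ R]$ and the decomposition \eqref{cdc18so:lemma:LReq}. Because the eigenvalues lie in $[0,\lambda_n]$, this also gives $L\le\rho(L){\bf I}_n$, with $\rho(L)=\lambda_n$ and $\rho_2(L)=\lambda_2$ by definition of the spectral radius and the minimum positive eigenvalue.

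Next I would treat $K_n={\bf I}_n-\tfrac1n{\bf 1}_n{\bf 1}_n^\top={\bf I}_n-rr^\top$, which is precisely the orthogonal projector onto the orthogonal complement of $\{{\bf 1}_n\}$. Thus $K_n$ is symmetric, idempotent, and positive semi-definite, with eigenvalue $0$ along $r$ and eigenvalue $1$ on every vector orthogonal to $r$; this immediately yields $\nullrank(K_n)=\{{\bf 1}_n\}$ and $\rho(K_n)=1$, and in the basis $[r\ R]$ it is represented by $\diag(0,{\bf I}_{n-1})$. All the remaining identities then collapse to multiplying block-diagonal matrices in the shared basis $[r\ R]$. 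For \eqref{nonconvex:KL-L-eq} and \eqref{nonconvex:lemma-eq3} I would compute $K_nL$, $LK_n$, $PL$, and $LP$ directly from their representatives: the $(1,1)$ blocks vanish, the lower blocks reduce to $\Lambda_1$ (for $K_nL$, recovering $L$) and to $\Lambda_1^{-1}\Lambda_1={\bf I}_{n-1}$ (for $PL$, recovering $K_n$), and symmetry handles the reversed products.

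For the two matrix inequalities I would compare block-diagonal representatives and verify positive semi-definiteness entrywise. Since $\lambda_2\le\lambda_i\le\lambda_n$ for every $i\ge2$, the diagonal matrices $\Lambda_1-\lambda_2{\bf I}_{n-1}$ and $\lambda_n{\bf I}_{n-1}-\Lambda_1$ have nonnegative entries, and conjugating back by $[r\ R]$ gives $\rho_2(L)K_n\le L\le\rho(L)K_n$, which is \eqref{nonconvex:KL-L-eq2}. Likewise the eigenvalues of $P$ are $\lambda_n^{-1}$ together with $\lambda_2^{-1}\ge\cdots\ge\lambda_n^{-1}$, so they all lie in $[\lambda_n^{-1},\lambda_2^{-1}]=[\rho^{-1}(L),\rho_2^{-1}(L)]$, yielding \eqref{nonconvex:lemma-eq5}. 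The main (and essentially only) obstacle is establishing simplicity of the zero eigenvalue from connectivity; once that is in hand, every other assertion is a mechanical computation in the fixed orthonormal basis, so I would streamline the write-up by performing the eigendecomposition once and extracting each identity from the block structure.
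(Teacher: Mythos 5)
Your proof is correct, but it takes a more self-contained route than the paper. The paper's own proof is essentially a citation: it invokes Lemmas~1 and~2 from the arXiv version of \cite{Yi2018distributed} for everything except \eqref{nonconvex:lemma-eq3}--\eqref{nonconvex:lemma-eq5}, and then disposes of those last two identities by the same block computation you perform (orthogonality of $[r\ R]$, the decomposition \eqref{cdc18so:lemma:LReq}, and the definitions of $K_n$ and $P$). You instead prove the outsourced portion from first principles: positive semi-definiteness of $L$ via the quadratic form $x^\top Lx=\tfrac12\sum_{i,j}a_{ij}(x_i-x_j)^2$, simplicity of the zero eigenvalue from connectivity, the spectral decomposition, and the identification of $K_n={\bf I}_n-rr^\top$ as the orthogonal projector onto $\{{\bf 1}_n\}^\perp$, after which all identities and Loewner-order inequalities reduce to comparisons of diagonal representatives in the shared basis $[r\ R]$ --- exactly the mechanism the paper's brief verification of \eqref{nonconvex:lemma-eq3}--\eqref{nonconvex:lemma-eq5} relies on. What your version buys is verifiability without chasing an external reference, and it correctly isolates the only nontrivial ingredient (connectivity forces the zero eigenvalue to be simple); what the paper's version buys is brevity, at the cost of leaning on prior work (and, incidentally, referring to an undefined matrix $Q$ in its one-line verification). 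All of your individual steps --- the projector structure of $K_n$, the block products giving $K_nL=L$ and $PL=K_n$, and the eigenvalue bounds $\lambda_2\le\lambda_i\le\lambda_n$ yielding \eqref{nonconvex:KL-L-eq2} and \eqref{nonconvex:lemma-eq5} --- check out.
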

\begin{proof}
	From Lemmas~1 and 2 in the arXiv version of \cite{Yi2018distributed}, we know that all the results except \eqref{nonconvex:lemma-eq3}--\eqref{nonconvex:lemma-eq5} hold.
	
	From that $[r \ R]$ is an orthogonal matrix, \eqref{cdc18so:lemma:LReq}, and the definitions of $K_n$, $P$, and $Q$, it is straightforward to check that \eqref{nonconvex:lemma-eq3}--\eqref{nonconvex:lemma-eq5} holds.
\end{proof}

\subsection{Proof of Theorem~\ref{nonconvex:thm-sm}}\label{nonconvex:proof-thm-sm}
To prove Theorem~\ref{nonconvex:thm-sm}, we first introduce some constants and notations. Denote the following constants
\begin{align*}
	\kappa_1&\ge\max\Big\{\frac{9+\kappa_4}{2\rho_2(L)},~1\Big\},~
	\kappa_2=\max\{\kappa_5,~\sqrt{\kappa_6}\},\\
	\kappa_3&=\min\Big\{\frac{\epsilon_1}{\epsilon_2},~\frac{\epsilon_3}{\epsilon_4},
	~\frac{\epsilon_5}{\epsilon_6},
	~\frac{\sqrt{\epsilon_8^2+4\epsilon_7\epsilon_9}-\epsilon_8}{2\epsilon_9}\Big\},\\
	\kappa_4&>0,~\kappa_5=\max\Big\{\frac{4+5L_f^2}{\kappa_4},~\frac{6}{\rho_2(L)}\Big\},\\
	\kappa_6&=\frac{8(\kappa_1+1)^2L_f^2}{\kappa_5\rho_2(L)}+\frac{4L_f^2}{\rho_2^2(L)},\\
	\epsilon_1&=\frac{\alpha}{2}\rho_2(L)-\frac{1}{4}(9\beta+4+5L_f^2),\\
	\epsilon_2&=3L_f^2+\frac{4(2+\varphi\psi r)}{\varphi\psi r}(\alpha^2\rho^2(L)+L_f^2)+2\beta^2+1+3\alpha^2\rho^2(L),\\
	\epsilon_3&=\frac{\beta}{2}-3\rho^{-1}_2(L),\\
	\epsilon_4&=2\beta^2\rho(L)+\frac{4(2+\varphi\psi r)\beta^2\rho^2(L)}{\varphi\psi r}+\rho_2^{-1}(L),\\
	\epsilon_5&=\frac{1}{8}-\frac{(\alpha+\beta)^2L_f^2}{\beta^5\rho_2(L)}
	-\frac{L_f^2}{2\beta^2\rho^2_2(L)},\\
	\epsilon_6&=\frac{(\alpha+\beta)L_f^2}{2\beta^3\rho_2(L)}+\frac{3L_f^2}{4}
	+\frac{L_f^2}{2\beta^2\rho^2_2(L)}+\frac{L_f}{2},\\
	\epsilon_7&=\frac{\varphi\psi r}{2}(1+\varphi\psi r),\\
	\epsilon_8&=\frac{1}{2}(\alpha+2\beta)\rho(L)r_0+2\beta r_0,\\
	\epsilon_9&=\frac{(8+7\varphi\psi r)\alpha^2\rho^2(L)r_0}{\varphi\psi r}+(2\beta^2+1)r_0,\\
	\epsilon_{10}&=\frac{\alpha\rho_2(L)-\beta}{2\alpha\rho_2(L)},\\
	c_1&=\Big(\frac{(\alpha+\beta)^2}{\eta\beta^5}+\frac{\alpha+\beta}{2\beta^3}\Big)
	\frac{1}{\rho_2(L)}+\frac{1}{2},\\
	c_2&=\frac{\eta+1}{2\eta\beta^2\rho^2_2(L)}+\frac{1}{4},~c_3=\frac{\varphi\psi r}{2},\\
	c_4&=c_3+2c_3^2-4\eta^2(1+c_3^{-1})\alpha^2\rho^2(L)r_0,\\
	c_5&=\frac{\alpha}{2}\rho_2(L)-\frac{1}{4}(\beta+4+5L_f^2),\\
	c_6&=3L_f^2+4(1+c_3^{-1})(\alpha^2\rho^2(L)+L_f^2),\\
	c_7&=\beta^2+\frac{1}{2}+\frac{3\alpha^2\rho^2(L)}{2}.
\end{align*}

Denote $\bsx=\col(x_1,\dots,x_n)$, $\tilde{f}(\bsx)=\sum_{i=1}^{n}f_i(x_i)$, $\bsL=L\otimes {\bf I}_d$, $\bsH=\frac{1}{n}({\bf 1}_n{\bf 1}_n^\top\otimes{\bf I}_d)$, $\bsK=K_n\otimes {\bf I}_d={\bf I}_{nd}-\bsH$, $\bsP=P\otimes {\bf I}_{d}$, $\bar{\bsx}_k={\bf 1}_n\otimes\bar{x}_k=\bsH\bsx_k$, $\bsg_k=\nabla\tilde{f}(\bsx_k)$, $\bar{\bsg}_k=\bsH\bsg_{k}$, $\bsg^0_k=\nabla\tilde{f}(\bar{\bsx}_k)$, $\bar{\bsg}_k^0=\bsH\bsg^0_{k}={\bf 1}_n\otimes\nabla f(\bar{x}_k)$. Moreover, without ambiguity, we denote $\mathcal{C}(\bsx)=\col(\mathcal{C}(x_1),\dots,\mathcal{C}(x_n))$.
We also denote
\begin{align*}
	U_{1,k}&=\frac{1}{2}\|\bm{x}_k \|^2_{\bsK},~U_{2,k}=\frac{1}{2}\Big\|\bsv_k
	+\frac{1}{\beta}\bsg_k^0\Big\|^2_{\frac{\alpha+\beta}{\beta}\bsP}\\
	U_{3,k}&=\bsx_k^\top\bsK\bsP\Big(\bm{v}_k+\frac{1}{\beta}\bsg_k^0\Big),~
	U_{4,k}=n(f(\bar{x}_k)-f^*),\\
	U_{k}&=\sum_{i=1}^{4}U_{i,k},~V_{k}=U_{k}+\|\bsx_{k}-\bsa_{k}\|^2,\\
	\hat{U}_k&=\|\bm{x}_k\|^2_{\bsK}+\Big\|\bsv_k
	+\frac{1}{\beta}\bsg_k^0\Big\|^2_{\bsP}+n(f(\bar{x}_k)-f^*),\\
	\hat{V}_k&=\hat{U}_k+\|\bsx_{k}-\bsa_{k}\|^2,\\
	\bsM_1&=\frac{\alpha}{2}\bsL-\frac{1}{4}(\beta+4+5L_f^2)\bsK,\\
	\bsM_2&=\Big(\beta^2+\frac{1}{2}\Big)\bsK+\frac{\beta(\beta-\alpha)}{2}\bsL+\frac{3\alpha^2}{2}\bsL^2.
\end{align*}

Note that $U_{4,k}$ is well defined since $f^*>-\infty$ as assumed in Assumption~\ref{nonconvex:ass:optset}.
To prove Theorem~\ref{nonconvex:thm-sm}, the following lemma is used, which presents a general relation between two consecutive outputs of Algorithm~\ref{nonconvex:algorithm-pdgd}.
\begin{lemma}\label{noncovex:lemma:pdgd}
	Suppose Assumptions~\ref{nonconvex:ass:compression} and \ref{nonconvex:ass:graph}--\ref{nonconvex:ass:fiu} hold. Let $\{x_{i,k}\}$ be the sequence generated by Algorithm~\ref{nonconvex:algorithm-pdgd} with $\alpha\ge\beta$ and $\psi\in(0,1/r]$. Then,
	\begin{align}\label{nonconvex:vkLya-sm2}
		\mathbf{E}_{\mathcal{C}}[V_{k+1}]&\le  \mathbf{E}_{\mathcal{C}}\Big[V_{k}-\frac{\eta}{4}\|\bar{\bsg}_{k}^0\|^2
		-\|\bsx_k\|^2_{\eta(\epsilon_1-\eta \epsilon_2)\bsK}\nonumber\\
		&\quad-\Big\|\bm{v}_k+\frac{1}{\beta}\bsg_k^0\Big\|^2_{\eta(\epsilon_3-\eta\epsilon_4)\bsP}
		-\eta(\epsilon_5-\eta\epsilon_6)\|\bar{\bsg}_{k}\|^2\nonumber\\
		&\quad-(\epsilon_7-\eta\epsilon_8-\eta^2\epsilon_9)\|\bsx_{k}-\bsa_{k}\|^2\Big].
	\end{align}
\end{lemma}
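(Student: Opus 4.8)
The plan is to carry out a one-step Lyapunov analysis by first writing the algorithm in compact stacked form and decomposing it into an ``exact'' primal--dual update plus a compression-error perturbation. Using the induction identity $b_{i,k}=a_{i,k}-\sum_j L_{ij}a_{j,k}$, the updates \eqref{nonconvex:kia-algo-dc-x} and \eqref{nonconvex:kia-algo-dc-v} reduce to
\begin{align*}
	\bsx_{k+1}&=\bsx_k-\eta(\alpha\bsL\bsx_k+\beta\bsv_k+\bsg_k)+\eta\alpha\bsL(\bsx_k-\hat{\bsx}_k),\\
	\bsv_{k+1}&=\bsv_k+\eta\beta\bsL\bsx_k-\eta\beta\bsL(\bsx_k-\hat{\bsx}_k),
\end{align*}
where $\hat{\bsx}_k=\bsa_k+\mathcal{C}(\bsx_k-\bsa_k)$ and the perturbation $\bsx_k-\hat{\bsx}_k=(\bsx_k-\bsa_k)-\mathcal{C}(\bsx_k-\bsa_k)$ satisfies $\mathbf{E}_{\mathcal{C}}[\|\bsx_k-\hat{\bsx}_k\|^2]\le r_0\|\bsx_k-\bsa_k\|^2$ by \eqref{nonconvex:ass:compression_equ}. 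This decomposition lets me treat the clean dynamics exactly as in the exact-communication analysis of \cite{yi2021linear} and absorb all compression effects into terms proportional to $\bsL(\bsx_k-\hat{\bsx}_k)$, which are ultimately controlled by the extra Lyapunov piece $\|\bsx_k-\bsa_k\|^2$.

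Next I would bound the one-step change of each Lyapunov component separately. For the consensus piece $U_{1,k}=\frac{1}{2}\|\bsx_k\|_{\bsK}^2$ I expand $\|\bsx_{k+1}\|_{\bsK}^2$, use $\bsK\bsL=\bsL$ and $\bsL\ge\rho_2(L)\bsK$ from Lemma~\ref{nonconvex:lemma-Xinlei} (i.e.\ \eqref{nonconvex:KL-L-eq2}) to extract the dominant dissipation $-\eta\alpha\|\bsx_k\|_{\bsL}^2$, and split the remaining products by Lemma~\ref{nonconvex:lemma:inequality-arithmetic}. For the optimization piece $U_{4,k}=n(f(\bar x_k)-f^*)$ I apply the descent lemma from $L_f$-smoothness together with $\bar{x}_{k+1}-\bar{x}_k=-\frac{\eta}{n}\sum_i\nabla f_i(x_{i,k})$ (the Laplacian and dual contributions vanish under averaging since $\bsH\bsL=\bszero$ by Lemma~\ref{nonconvex:lemma-Xinlei}), producing the $-\frac{\eta}{4}\|\bar{\bsg}_k^0\|^2$ and $\|\bar{\bsg}_k\|^2$ contributions; the gap $\|\bsg_k-\bsg_k^0\|\le L_f\|\bsx_k\|_{\bsK}$ is controlled again by smoothness and $\bsg_k^0={\bf 1}_n\otimes\nabla f(\bar x_k)$. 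For the dual piece $U_{2,k}$ and the cross piece $U_{3,k}$ I substitute the stacked updates, use $\bsP\bsL=\bsL\bsP=\bsK$ from \eqref{nonconvex:lemma-eq3} and the eigenvalue bounds \eqref{nonconvex:lemma-eq5} to pass between $\bsP$- and $\bsK$-weighted norms, and bound $\bsg^0_{k+1}-\bsg^0_k$ through smoothness. The compression term is treated via the recursion $\bsx_{k+1}-\bsa_{k+1}=(\bsx_k-\bsa_k)-\psi\mathcal{C}(\bsx_k-\bsa_k)+(\bsx_{k+1}-\bsx_k)$; taking $\mathbf{E}_{\mathcal{C}}$ and using Assumption~\ref{nonconvex:ass:compression} in the form \eqref{nonconvex:ass:compression_equ_scaling} together with $\psi\in(0,1/r]$ yields a contraction governed by $\varphi\psi r$, which supplies the leading coefficient $\epsilon_7=\frac{\varphi\psi r}{2}(1+\varphi\psi r)$, while the increment $\bsx_{k+1}-\bsx_k$ contributes the $\eta$- and $\eta^2$-order terms collected in $\epsilon_8,\epsilon_9$.

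Finally I would sum the four component bounds and the compression-error bound, take expectation, and repeatedly apply Lemma~\ref{nonconvex:lemma:inequality-arithmetic} with suitably paired weights so that every cross product is split into a part absorbed by a negative quadratic ($\|\bsx_k\|_{\bsK}^2$, $\|\bsv_k+\frac{1}{\beta}\bsg_k^0\|_{\bsP}^2$, $\|\bar{\bsg}_k\|^2$, $\|\bsx_k-\bsa_k\|^2$) and a remainder of higher order in $\eta$. Matching the resulting coefficients against the definitions of $\epsilon_1,\dots,\epsilon_9$ then gives \eqref{nonconvex:vkLya-sm2}. The main obstacle is the bookkeeping of the cross term $U_{3,k}$, whose evolution simultaneously couples the consensus error, the dual variable, the averaged gradient, and the compression perturbation; each coupling must be split by Young's inequality with exactly the right weight so that the accumulated coefficients organize into the clean linear-in-$\eta$ form $(\epsilon_i-\eta\epsilon_{i+1})$ and the quadratic form $(\epsilon_7-\eta\epsilon_8-\eta^2\epsilon_9)$. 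Correctly propagating the $\varphi\psi r$ factors from the compression contraction through these many inequalities, while keeping the $\bsP$-weighted and $\bsK$-weighted norms consistent via \eqref{nonconvex:KL-L-eq2} and \eqref{nonconvex:lemma-eq5}, is where the real effort lies.
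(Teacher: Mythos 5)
Your proposal follows essentially the same route as the paper's proof: the same compact-form view of the update with $\hat{\bsx}_k$ as a compression perturbation, the same component-wise one-step bounds on the four Lyapunov pieces via Lemma~\ref{nonconvex:lemma-Xinlei} and Young's inequality, the same descent-lemma treatment of $n(f(\bar{x}_k)-f^*)$ resting on $\bar{v}_k={\bf 0}_d$ (from zero initialization and ${\bf 1}_n^\top L=0$), and the same $\psi r$-splitting of the recursion for $\|\bsx_{k}-\bsa_{k}\|^2$ under \eqref{nonconvex:ass:compression_equ_scaling} that produces the leading coefficient $\epsilon_7=\frac{\varphi\psi r}{2}(1+\varphi\psi r)$ and the $\eta$-, $\eta^2$-order terms in $\epsilon_8,\epsilon_9$. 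The only difference is cosmetic: you isolate the exact primal--dual dynamics from the perturbation up front, whereas the paper splits $\hat{\bsx}_k=\bsx_k+(\hat{\bsx}_k-\bsx_k)$ term by term inside each expansion, so the plan is correct and matches the paper's argument.
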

\begin{proof}
	{\bf (i)}  We first introduce some useful equations.

	The compact form of \eqref{nonconvex:kia-algo-dc-a}, \eqref{nonconvex:kia-algo-dc-b}, \eqref{nonconvex:kia-algo-dc-x-compress}, \eqref{nonconvex:kia-algo-dc-v-compress}, and \eqref{nonconvex:kia-algo-dc-q} is
	\begin{subequations}\label{nonconvex:alg-compress-compact}
		\begin{align}
			\bsa_{k+1}&=\bsa_{k}+\psi\bsq_k,\label{nonconvex:kia-algo-dc-compact-a}\\
			\bsb_{k+1}&=\bsb_{k}+\psi({\bf I}_{np}-\bsL)\bsq_k,\label{nonconvex:alg-compress-compact-b}\\
			\bm{x}_{k+1}&=\bm{x}_k-\eta(\alpha\bsL\hat{\bsx}_k+\beta\bm{v}_k+\nabla \tilde{f}(\bm{x}_k)),\label{nonconvex:kia-algo-dc-compact-x}\\
			\bm{v}_{k+1}&=\bm{v}_k+\eta\beta\bsL\hat{\bsx}_k,\label{nonconvex:kia-algo-dc-compact-v}\\
			\bsq_{k+1}&=\mathcal{C}(\bsx_{k+1}-\bsa_{k+1}).\label{nonconvex:kia-algo-dc-compact-q}
		\end{align}
	\end{subequations}
	
	Denote $\bar{v}_k=\frac{1}{n}({\bf 1}_n^\top\otimes{\bf I}_p)\bsv_k$. Then,
	from \eqref{nonconvex:kia-algo-dc-compact-v} and $\sum_{i=1}^{n}L_{ij}=0$, we know that
	$\bar{v}_{k+1}=\bar{v}_k$.
	This together with the fact that $\sum_{i=1}^{n}v_{i,0}={\bf0}_d$ implies
	\begin{align}
		\bar{v}_k={\bm 0}_d.\label{nonconvex:vkn}
	\end{align}
	Then, from \eqref{nonconvex:vkn} and \eqref{nonconvex:kia-algo-dc-compact-x}, we know that
	\begin{align}
		&\bar{\bsx}_{k+1}=\bar{\bsx}_{k}-\eta\bar{\bsg}_k.\label{nonconvex:xbardynamic}
	\end{align}

	Noting that $\nabla\tilde{f}$ is Lipschitz-continuous with constant $L_{f}>0$ as assumed in Assumption~\ref{nonconvex:ass:fiu}, we have
	\begin{align}
		\|\bsg^0_{k}-\bsg_{k}\|^2\le L_f^2\|\bar{\bsx}_{k}-\bsx_{k}\|^2=L_f^2\|\bsx_{k}\|^2_{\bsK}.\label{nonconvex:gg1}
	\end{align}
	Then, from \eqref{nonconvex:gg1} and $\rho(\bsH)=1$, we have
	\begin{align}
		&\|\bar{\bsg}^0_k-\bar{\bsg}_k\|^2=\|\bsH(\bsg^0_{k}-\bsg_{k})\|^2\le\|\bsg^0_{k}-\bsg_{k}\|^2\le L_f^2\|\bsx_k\|^2_{\bsK}.\label{nonconvex:gg2}
	\end{align}
	
	From $\nabla\tilde{f}$ is Lipschitz-continuous and \eqref{nonconvex:xbardynamic}, we have
	\begin{align}
		&\|\bsg^0_{k+1}-\bsg^0_{k}\|^2\le L_f^2\|\bar{\bsx}_{k+1}-\bar{\bsx}_{k}\|^2=\eta^2L_f^2\|\bar{\bsg}_k\|^2.\label{nonconvex:gg}
	\end{align}

	From Lemma~1.2.3 in \cite{nesterov2018lectures}, we know that \eqref{nonconvex:smooth} implies
	\begin{align}
		&|f_i(y)-f_i(x)-(y-x)^\top\nabla f_i(x)|\le\frac{L_f}{2}\|y-x\|^2,~\forall x,y\in\mathbb{R}^{d}. \label{nonconvex:lemma:smooth}
	\end{align}
	From \eqref{nonconvex:lemma:smooth} and \eqref{nonconvex:xbardynamic}, we have
	\begin{align}\label{nonconvex:lemma:smooth2}
		\tilde{f}(\bar{\bsx}_{k+1})-\tilde{f}(\bar{\bsx}_k)
		\le-\eta\bar{\bsg}_{k}^\top\bsg^0_k
		+\frac{\eta^2L_f}{2}\|\bar{\bsg}_{k}\|^2.
	\end{align}
	
	\noindent {\bf (ii)}  This step is to show the relation between $U_{1,k+1}$ and $U_{1,k}$. We have
	\begin{align}
		U_{1,k+1}&=\frac{1}{2}\|\bm{x}_{k+1} \|^2_{\bsK}
		=\frac{1}{2}\|\bm{x}_k-\eta(\alpha\bsL\hat{\bm{x}}_k+\beta\bm{v}_k+\bsg_k) \|^2_{\bsK}\nonumber\\
		&=\frac{1}{2}\|\bm{x}_k\|^2_{\bsK}-\eta\alpha\bsx^\top_k\bsL\hat{\bm{x}}_k
		+\|\hat{\bm{x}}_k\|^2_{\frac{\eta^2\alpha^2}{2}\bsL^2}\nonumber\\
		&\quad-\eta\beta(\bsx^\top_k-\eta\alpha\hat{\bm{x}}_k^\top\bsL)\bsK
		\Big(\bm{v}_k+\frac{1}{\beta}\bsg_k\Big)\nonumber
		+\Big\|\bm{v}_k+\frac{1}{\beta}\bsg_k\Big\|^2_{\frac{\eta^2\beta^2}{2}\bsK}\nonumber\\
		&=\frac{1}{2}\|\bm{x}_k\|^2_{\bsK}-\eta\alpha\bsx^\top_k\bsL(\bm{x}_k+\hat{\bm{x}}_k-\bm{x}_k)
		+\|\hat{\bm{x}}_k\|^2_{\frac{\eta^2\alpha^2}{2}\bsL^2}\nonumber\\
		&\quad-\eta\beta(\bsx^\top_k-\eta\alpha\hat{\bm{x}}_k^\top\bsL)\bsK\Big(\bm{v}_k
		+\frac{1}{\beta}\bsg_k^0
		+\frac{1}{\beta}\bsg_k-\frac{1}{\beta}\bsg_k^0\Big)\nonumber\\
		&\quad+\Big\|\bm{v}_k+\frac{1}{\beta}\bsg_k^0
		+\frac{1}{\beta}\bsg_k-\frac{1}{\beta}\bsg_k^0\Big\|^2_{\frac{\eta^2\beta^2}{2}\bsK}\nonumber\\
		&\le\frac{1}{2}\|\bm{x}_k\|^2_{\bsK}-\|\bsx_k\|^2_{\eta\alpha\bsL}
		+\|\bsx_k\|^2_{\frac{\eta\alpha}{2}\bsL}+\|\hat{\bm{x}}_k-\bm{x}_k\|^2_{\frac{\eta\alpha}{2}\bsL}
		\nonumber\\
		&\quad+\|\hat{\bm{x}}_k\|^2_{\frac{\eta^2\alpha^2}{2}\bsL^2}
		-\eta\beta\bsx^\top_k\bsK\Big(\bm{v}_k+\frac{1}{\beta}\bsg_k^0\Big)
		+\frac{\eta}{2}\|\bm{x}_k\|^2_{\bsK}\nonumber\\
		&\quad+\frac{\eta}{2}\|\bsg_k-\bsg_k^0\|^2+\|\hat{\bm{x}}_k\|^2_{\frac{\eta^2\alpha^2}{2}\bsL^2}
		+\frac{\eta^2\beta^2}{2}\Big\|\bm{v}_k+\frac{1}{\beta}\bsg_k^0\Big\|^2\nonumber\\
		&\quad+\|\hat{\bm{x}}_k\|^2_{\frac{\eta^2\alpha^2}{2}\bsL^2}
		+\frac{\eta^2}{2}\|\bsg_k-\bsg_k^0\|^2\nonumber\\
		&\quad+\eta^2\beta^2\Big\|\bm{v}_k+\frac{1}{\beta}\bsg_k^0\Big\|^2
		+\eta^2\|\bsg_k-\bsg_k^0\|^2\nonumber\\
		&=\frac{1}{2}\|\bm{x}_k\|^2_{\bsK}-\|\bsx_k\|^2_{\frac{\eta\alpha}{2}\bsL-\frac{\eta}{2}\bsK}
		+\|\hat{\bm{x}}_k\|^2_{\frac{3\eta^2\alpha^2}{2}\bsL^2}\nonumber\\
		&\quad+\frac{\eta}{2}(1+3\eta)\|\bsg_k-\bsg_k^0\|^2
		+\|\hat{\bm{x}}_k-\bm{x}_k\|^2_{\frac{\eta\alpha}{2}\bsL}\nonumber\\
		&\quad-\eta\beta(\hat{\bsx}_k+\bsx_k-\hat{\bsx}_k)^\top\bsK
		\Big(\bm{v}_k+\frac{1}{\beta}\bsg_k^0\Big)\nonumber+\frac{3\eta^2\beta^2}{2}\Big\|\bm{v}_k+\frac{1}{\beta}\bsg_k^0\Big\|^2\nonumber\\
		&\le\frac{1}{2}\|\bm{x}_k\|^2_{\bsK}-\|\bsx_k\|^2_{\frac{\eta\alpha}{2}\bsL-\frac{\eta}{2}\bsK}
		+\|\hat{\bm{x}}_k\|^2_{\frac{3\eta^2\alpha^2}{2}\bsL^2}\nonumber\\
		&\quad+\frac{\eta}{2}(1+3\eta)\|\bsg_k-\bsg_k^0\|^2
		+\|\hat{\bm{x}}_k-\bm{x}_k\|^2_{\frac{\eta}{2}(\alpha\bsL+2\beta\rho(L)\bsK)}\nonumber\\
		&\quad-\eta\beta\hat{\bsx}^\top_k\bsK\Big(\bm{v}_k+\frac{1}{\beta}\bsg_k^0\Big)\nonumber
		+\frac{6\eta^2\beta^2+\eta\beta\rho^{-1}(L)}{4}\Big\|\bm{v}_k+\frac{1}{\beta}\bsg_k^0\Big\|^2
		\nonumber\\
		&\le\frac{1}{2}\|\bm{x}_k\|^2_{\bsK}-\|\bsx_k\|^2_{\frac{\eta\alpha}{2}\bsL-\frac{\eta}{2}\bsK
			-\frac{\eta}{2}(1+3\eta)L_f^2\bsK}+\|\hat{\bm{x}}_k\|^2_{\frac{3\eta^2\alpha^2}{2}\bsL^2}\nonumber\\
		&\quad-\eta\beta\hat{\bsx}^\top_k\bsK\Big(\bm{v}_k+\frac{1}{\beta}\bsg_k^0\Big)
		+\Big\|\bm{v}_k+\frac{1}{\beta}\bsg_k^0\Big\|^2_{\frac{6\eta^2\beta^2\rho(L)+\eta\beta}{4}\bsP}\nonumber\\
		&\quad+\frac{\eta}{2}(\alpha+2\beta)\rho(L)\|\bsx_k-\hat{\bsx}_k\|^2,\label{nonconvex:v1k}
	\end{align}
	where the second and third equalities hold due to \eqref{nonconvex:kia-algo-dc-compact-x} and \eqref{nonconvex:KL-L-eq}, respectively; the first and second inequalities hold due to \eqref{nonconvex:lemma:inequality-arithmetic-equ} and $\rho(\bsK)=1$; and the last  inequality holds due to \eqref{nonconvex:lemma-eq5} and \eqref{nonconvex:gg1}.
	
	\noindent {\bf (iii)}  This step is to show the relation between $U_{2,k+1}$ and $U_{2,k}$.	We have
	\begin{align}
		U_{2,k+1}&=\frac{1}{2}\Big\|\bsv_{k+1}+\frac{1}{\beta}\bsg_{k+1}^0\Big\|^2_{\frac{\alpha+\beta}{\beta}\bsP}\nonumber\\
		&=\frac{1}{2}\Big\|\bm{v}_k+\frac{1}{\beta}\bsg_{k}^0+\eta\beta\bsL\hat{\bm{x}}_k
		+\frac{1}{\beta}(\bsg_{k+1}^0-\bsg_{k}^0) \Big\|^2_{\bsP+\frac{\alpha}{\beta}\bsP}\nonumber\\
		&=\frac{1}{2}\Big\|\bm{v}_k+\frac{1}{\beta}\bsg_{k}^0\Big\|^2_{\bsP+\frac{\alpha}{\beta}\bsP}
		+\eta(\alpha+\beta)\hat{\bsx}^\top_k\bsK\Big(\bm{v}_k+\frac{1}{\beta}\bsg_k^0\Big)\nonumber\\
		&\quad+\|\hat{\bsx}_k\|^2_{\frac{\eta^2\beta}{2}(\alpha+\beta)\bsL}
		+\frac{1}{2\beta^2}\|\bsg_{k+1}^0-\bsg_{k}^0\|^2_{\bsP+\frac{\alpha}{\beta}\bsP}\nonumber\\
		&\quad+\frac{1}{\beta}\Big(\bm{v}_k+\frac{1}{\beta}\bsg_{k}^0
		\Big)^\top\Big(\bsP+\frac{\alpha}{\beta}\bsP\Big)(\bsg_{k+1}^0-\bsg_{k}^0)\nonumber\\
		&\quad+\eta\hat{\bm{x}}_k^\top\Big(\bsK+\frac{\alpha}{\beta}\bsK\Big)(\bsg_{k+1}^0-\bsg_{k}^0)\nonumber\\
		&\le\frac{1}{2}\Big\|\bm{v}_k+\frac{1}{\beta}\bsg_{k}^0\Big\|^2_{\bsP+\frac{\alpha}{\beta}\bsP}
		+\eta(\alpha+\beta)\hat{\bsx}^\top_k\bsK\Big(\bm{v}_k+\frac{1}{\beta}\bsg_k^0\Big)\nonumber\\
		&\quad+\|\hat{\bsx}\|^2_{\frac{\eta^2\beta}{2}(\alpha+\beta)\bsL}
		+\|\bsg_{k+1}^0-\bsg_{k}^0\|^2_{\frac{\alpha+\beta}{2\beta^3}\bsP}\nonumber\\
		&\quad+\Big\|\bm{v}_k+\frac{1}{\beta}\bsg_{k}^0\Big\|^2_{\frac{\eta\beta}{4}\bsP}
		+\|\bsg_{k+1}^0-\bsg_{k}^0\|^2_{\frac{(\alpha+\beta)^2}{\eta\beta^5}\bsP}\nonumber\\
		&\quad+\|\hat{\bm{x}}_k\|^2_{\frac{\eta^2}{2}\bsK}
		+\frac{1}{2}\|\bsg_{k+1}^0-\bsg_{k}^0\|^2
		+\frac{\eta\alpha}{\beta}\hat{\bm{x}}_k^\top\bsK(\bsg_{k+1}^0-\bsg_{k}^0)\nonumber\\
		&=\frac{1}{2}\Big\|\bm{v}_k+\frac{1}{\beta}\bsg_{k}^0\Big\|^2_{\bsP+\frac{\alpha}{\beta}\bsP}
		+\eta(\alpha+\beta)\hat{\bsx}^\top_k\bsK\Big(\bm{v}_k+\frac{1}{\beta}\bsg_k^0\Big)\nonumber\\
		&\quad+\|\hat{\bsx}\|^2_{\frac{\eta^2\beta}{2}(\alpha+\beta)\bsL+\frac{\eta^2}{2}\bsK}
		+\Big\|\bm{v}_k+\frac{1}{\beta}\bsg_{k}^0\Big\|^2_{\frac{\eta\beta}{4}\bsP}\nonumber\\
		&\quad+\|\bsg_{k+1}^0-\bsg_{k}^0\|^2_{\frac{(\alpha+\beta)^2}{\eta\beta^5}\bsP
			+\frac{\alpha+\beta}{2\beta^3}\bsP}\nonumber\\
		&\quad+\frac{1}{2}\|\bsg_{k+1}^0-\bsg_{k}^0\|^2
		+\frac{\eta\alpha}{\beta}\hat{\bm{x}}_k^\top\bsK(\bsg_{k+1}^0-\bsg_{k}^0)\nonumber\\
		&\le\frac{1}{2}\Big\|\bm{v}_k+\frac{1}{\beta}\bsg_{k}^0\Big\|^2_{\bsP+\frac{\alpha}{\beta}\bsP}
		+\eta(\alpha+\beta)\hat{\bsx}^\top_k\bsK\Big(\bm{v}_k+\frac{1}{\beta}\bsg_k^0\Big)\nonumber\\
		&\quad+\|\hat{\bsx}\|^2_{\frac{\eta^2\beta}{2}(\alpha+\beta)\bsL+\frac{\eta^2}{2}\bsK}
		+\Big\|\bm{v}_k+\frac{1}{\beta}\bsg_{k}^0\Big\|^2_{\frac{\eta\beta}{4}\bsP}\nonumber\\
		&\quad+c_1\|\bsg_{k+1}^0-\bsg_{k}^0\|^2+\frac{\eta\alpha}{\beta}
		\hat{\bm{x}}_k^\top\bsK(\bsg_{k+1}^0-\bsg_{k}^0)
		\nonumber\\
		&\le\frac{1}{2}\Big\|\bm{v}_k+\frac{1}{\beta}\bsg_{k}^0\Big\|^2_{\frac{\alpha+\beta}{\beta}\bsP}
		+\eta(\alpha+\beta)\hat{\bsx}^\top_k\bsK\Big(\bm{v}_k+\frac{1}{\beta}\bsg_k^0\Big)\nonumber\\
		&\quad+\|\hat{\bsx}\|^2_{\frac{\eta^2\beta}{2}(\alpha+\beta)\bsL+\frac{\eta^2}{2}\bsK}
		+\Big\|\bm{v}_k+\frac{1}{\beta}\bsg_{k}^0\Big\|^2_{\frac{\eta\beta}{4}\bsP}\nonumber\\
		&\quad+\eta^2 c_1L_f^2\|\bar{\bsg}_k\|^2+\frac{\eta\alpha}{\beta}\hat{\bm{x}}_k^\top\bsK(\bsg_{k+1}^0-\bsg_{k}^0),\label{nonconvex:v2k}
	\end{align}
	where the second and third equalities hold due to \eqref{nonconvex:kia-algo-dc-compact-v} and \eqref{nonconvex:lemma-eq3}, respectively;  the first, second, and last inequalities hold due to \eqref{nonconvex:lemma:inequality-arithmetic-equ}, \eqref{nonconvex:lemma-eq5} and  \eqref{nonconvex:gg}, respectively.
	
	\noindent {\bf (iv)}  This step is to show the relation between $U_{3,k+1}$ and $U_{3,k}$.	We have
	\begin{align}
		U_{3,k+1}&=\bsx_{k+1}^\top\bsK\bsP\Big(\bm{v}_{k+1}+\frac{1}{\beta}\bsg_{k+1}^0\Big)\nonumber\\
		&=(\bm{x}_k-\eta(\alpha\bsL\hat{\bsx}_k+\beta\bm{v}_k+\bsg_k^0+\bsg_k-\bsg_k^0))^\top
		\bsK\bsP\Big(\bm{v}_k\nonumber\\
		&\quad+\frac{1}{\beta}\bsg_{k}^0+\eta\beta\bsL\hat{\bsx}_k+\frac{1}{\beta}(\bsg_{k+1}^0-\bsg_{k}^0)\Big)\nonumber\\
		&=(\bm{x}_k^\top\bsK\bsP-\eta(\alpha+\eta\beta^2)\hat{\bsx}_k^\top\bsK)
		\Big(\bm{v}_k+\frac{1}{\beta}\bsg_{k}^0\Big)\nonumber\\
		&\quad+\eta\beta\bm{x}_k^\top\bsK\hat{\bsx}_k-\|\hat{\bsx}_k\|^2_{\eta^2\alpha\beta\bsL}\nonumber\\
		&\quad
		+\frac{1}{\beta}(\bm{x}_k^\top\bsK\bsP-\eta\alpha\hat{\bsx}_k^\top\bsK)(\bsg_{k+1}^0-\bsg_{k}^0)\nonumber\\
		&\quad-\eta(\beta\bm{v}_k+\bsg_{k}^0+\bsg_k-\bsg_k^0-\bar{\bsg}_k)^\top\bsP
		\Big(\bm{v}_k+\frac{1}{\beta}\bsg_{k}^0\Big)\nonumber\\
		&\quad
		-\eta\Big(\bm{v}_k+\frac{1}{\beta}\bsg_{k}^0\Big)^\top\bsP\bsK(\bsg_{k+1}^0-\bsg_{k}^0)\nonumber\\
		&\quad-\eta(\bsg_k-\bsg_k^0)^\top
		\Big(\eta\beta\bsK\hat{\bm{x}}_k+\frac{1}{\beta}\bsK\bsP(\bsg_{k+1}^0-\bsg_{k}^0)\Big)\nonumber\\
		&\le(\bm{x}_k^\top\bsK\bsP-\eta\alpha\hat{\bsx}_k^\top\bsK)
		\Big(\bm{v}_k+\frac{1}{\beta}\bsg_{k}^0\Big)
		+\|\hat{\bsx}_k\|^2_{\frac{\eta^2\beta^2}{2}\bsK}\nonumber\\
		&\quad+\frac{\eta^2\beta^2}{2}\Big\|\bm{v}_k+\frac{1}{\beta}\bsg_{k}^0\Big\|^2
		+\|\bm{x}_k\|^2_{\frac{\eta\beta}{4}\bsK}+\|\hat{\bm{x}}_k\|^2_{\eta\beta(\bsK-\eta\alpha\bsL)}\nonumber\\
		&\quad+\|\bm{x}_k\|^2_{\frac{\eta}{2}\bsK}+\|\bsg_{k+1}^0-\bsg_{k}^0\|^2_{\frac{1}{2\eta\beta^2}\bsP^2}
		\nonumber\\
		&\quad-\frac{\eta\alpha}{\beta}\hat{\bm{x}}_k^\top\bsK(\bsg_{k+1}^0-\bsg_{k}^0)
		-\Big\|\bm{v}_k+\frac{1}{\beta}\bsg_{k}^0\Big\|^2_{\eta\beta\bsP}\nonumber\\
		&\quad+\frac{\eta}{4}\|\bsg_k-\bsg_k^0\|^2+\frac{\eta}{8}\|\bar{\bsg}_k\|^2
		+\Big\|\bm{v}_k+\frac{1}{\beta}\bsg_{k}^0\Big\|^2_{3\eta\bsP^2}\nonumber\\
		&\quad+\Big\|\bm{v}_k+\frac{1}{\beta}\bsg_{k}^0\Big\|^2_{\eta^2\bsP^2}
		+\frac{1}{4}\|\bsg_{k+1}^0-\bsg_{k}^0\|^2
		+\frac{\eta^2}{2}\|\bsg_k-\bsg_k^0\|^2\nonumber\\
		&\quad+\|\hat{\bm{x}}_k\|^2_{\frac{\eta^2\beta^2}{2}\bsK}
		+\eta^2\|\bsg_k-\bsg_k^0\|^2
		+\|\bsg_{k+1}^0-\bsg_{k}^0\|^2_{\frac{1}{2\beta^2}\bsP^2}\nonumber\\
		&=(\bm{x}_k^\top\bsK\bsP-\eta\alpha\hat{\bsx}_k^\top\bsK)\Big(\bm{v}_k+\frac{1}{\beta}\bsg_{k}^0\Big)
		+\|\bm{x}_k\|^2_{\frac{\eta(\beta+2)}{4}\bsK}\nonumber\\
		&\quad+\|\hat{\bm{x}}_k\|^2_{\eta\beta\bsK
			+\eta^2(\beta^2\bsK-\alpha\beta\bsL)}
		+\frac{\eta}{4}(1+6\eta)\|\bsg_k-\bsg_k^0\|^2\nonumber\\
		&\quad+\|\bsg_{k+1}^0-\bsg_{k}^0\|^2_{\frac{\eta+1}{2\eta\beta^2}\bsP^2+\frac{1}{4}{\bf I}_{nd}}-\frac{\eta\alpha}{\beta}\hat{\bm{x}}_k^\top\bsK(\bsg_{k+1}^0-\bsg_{k}^0)\nonumber\\
		&\quad+\frac{\eta}{8}\|\bar{\bsg}_k\|^2
		-\Big\|\bm{v}_k+\frac{1}{\beta}\bsg_{k}^0\Big\|^2_{\eta\beta\bsP-3\eta\bsP^2-\eta^2\bsP^2
			-\frac{\eta^2\beta^2}{2}{\bf I}_{np}}\nonumber\\
		&\le(\bm{x}_k^\top\bsK\bsP-\eta\alpha\hat{\bsx}_k^\top\bsK)\Big(\bm{v}_k+\frac{1}{\beta}\bsg_{k}^0\Big)
		+\|\bm{x}_k\|^2_{\frac{\eta(\beta+2)}{4}\bsK}\nonumber\\
		&\quad+\|\hat{\bm{x}}_k\|^2_{\eta\beta\bsK
			+\eta^2(\beta^2\bsK-\alpha\beta\bsL)}
		+\frac{\eta}{4}(1+6\eta)\|\bsg_k-\bsg_k^0\|^2\nonumber\\
		&\quad+c_2\|\bsg_{k+1}^0-\bsg_{k}^0\|^2-\frac{\eta\alpha}{\beta}\hat{\bm{x}}_k^\top
		\bsK(\bsg_{k+1}^0-\bsg_{k}^0)+\frac{\eta}{8}\|\bar{\bsg}_k\|^2\nonumber\\
		&\quad-\Big\|\bm{v}_k+\frac{1}{\beta}\bsg_{k}^0\Big\|^2_{\eta(\beta-3\rho^{-1}_2(L))\bsP
			-\eta^2(\rho^{-1}_2(L)+\frac{\beta^2}{2}\rho(L))\bsP}
		\nonumber\\
		&\le\bm{x}_k^\top\bsK\bsP\Big(\bm{v}_k+\frac{1}{\beta}\bsg_{k}^0\Big)
		-\eta\alpha\hat{\bsx}_k^\top\bsK\Big(\bm{v}_k+\frac{1}{\beta}\bsg_{k}^0\Big)\nonumber\\
		&\quad+\|\hat{\bm{x}}_k\|^2_{\eta\beta\bsK
			+\eta^2(\beta^2\bsK-\alpha\beta\bsL)}
		+\|\bm{x}_k\|^2_{\frac{\eta(\beta+2)}{4}\bsK+\frac{\eta}{4}(1+6\eta)L_f^2\bsK}\nonumber\\
		&\quad-\frac{\eta\alpha}{\beta}\hat{\bm{x}}_k^\top
		\bsK(\bsg_{k+1}^0-\bsg_{k}^0)+\Big(\eta^2c_2L_f^2+\frac{\eta}{8}\Big)\|\bar{\bsg}_k\|^2\nonumber\\
		&\quad-\Big\|\bm{v}_k+\frac{1}{\beta}\bsg_{k}^0\Big\|^2_{\eta(\beta-3\rho^{-1}_2(L))\bsP
			-\eta^2(\rho^{-1}_2(L)+\frac{\beta^2}{2}\rho(L))\bsP},\label{nonconvex:v3k}
	\end{align}
	where the second equality holds due to \eqref{nonconvex:kia-algo-dc-compact-x} and \eqref{nonconvex:kia-algo-dc-compact-v}; the third equality holds due to \eqref{nonconvex:KL-L-eq}, \eqref{nonconvex:lemma-eq3}, \eqref{nonconvex:vkn} and $\bsK={\bf I}_{nd}-\bsH$; the first inequality holds due \eqref{nonconvex:lemma:inequality-arithmetic-equ} and $\rho(\bsK)=1$;  the second inequality holds due to \eqref{nonconvex:lemma-eq5}; and the last  inequality holds due to \eqref{nonconvex:gg1} and \eqref{nonconvex:gg}.
	
	\noindent {\bf (v)}  This step is to show the relation between $U_{4,k+1}$ and $U_{4,k}$.	We have
	\begin{align}
		U_{4,k+1}&=n(f(\bar{x}_{k+1})-f^*)=\tilde{f}(\bar{\bsx}_{k+1})-nf^*\nonumber\\
		&=\tilde{f}(\bar{\bsx}_k)-nf^*+\tilde{f}(\bar{\bsx}_{k+1})-\tilde{f}(\bar{\bsx}_k)\nonumber\\
		&\le\tilde{f}(\bar{\bsx}_k)-nf^*
		-\eta\bar{\bsg}_{k}^\top\bsg^0_k
		+\frac{\eta^2L_f}{2}\|\bar{\bsg}_{k}\|^2\nonumber\\
		&=\tilde{f}(\bar{\bsx}_k)-nf^*
		-\eta\bar{\bsg}_{k}^\top\bar{\bsg}^0_k
		+\frac{\eta^2L_f}{2}\|\bar{\bsg}_{k}\|^2\nonumber\\
		&=n(f(\bar{x}_k)-f^*)
		-\frac{\eta}{2}\bar{\bsg}_{k}^\top(\bar{\bsg}_k+\bar{\bsg}^0_k-\bar{\bsg}_k)-\frac{\eta}{2}(\bar{\bsg}_{k}-\bar{\bsg}^0_k+\bar{\bsg}^0_k)^\top\bar{\bsg}^0_k
		+\frac{\eta^2L_f}{2}\|\bar{\bsg}_{k}\|^2\nonumber\\
		&\le n(f(\bar{x}_k)-f^*)-\frac{\eta}{4}\|\bar{\bsg}_{k}\|^2
		+\frac{\eta}{4}\|\bar{\bsg}^0_k-\bar{\bsg}_k\|^2
		-\frac{\eta}{4}\|\bar{\bsg}_{k}^0\|^2
		+\frac{\eta}{4}\|\bar{\bsg}^0_k-\bar{\bsg}_k\|^2
		+\frac{\eta^2L_f}{2}\|\bar{\bsg}_{k}\|^2\nonumber\\
		&=n(f(\bar{x}_k)-f^*)-\frac{\eta}{4}(1-2\eta L_f)\|\bar{\bsg}_{k}\|^2
		+\frac{\eta}{2}\|\bar{\bsg}^0_k-\bar{\bsg}_k\|^2
		-\frac{\eta}{4}\|\bar{\bsg}_{k}^0\|^2\nonumber\\
		&\le n(f(\bar{x}_k)-f^*)-\frac{\eta}{4}(1-2\eta L_f)\|\bar{\bsg}_{k}\|^2
		+\|\bsx_k\|^2_{\frac{\eta}{2}L_f^2\bsK}
		-\frac{\eta}{4}\|\bar{\bsg}_{k}^0\|^2,\label{nonconvex:v4k-1}
	\end{align}
	where the first inequality holds due to \eqref{nonconvex:lemma:smooth2}; the fourth equality holds due to $\bar{\bsg}_{k}^\top\bsg^0_k=\bsg_{k}^\top\bsH\bsg^0_k=\bsg_{k}^\top\bsH\bsH\bsg^0_k=\bar{\bsg}_{k}^\top\bar{\bsg}^0_k$; the second inequality holds due to \eqref{nonconvex:lemma:inequality-arithmetic-equ}; and the last inequality holds due to \eqref{nonconvex:gg2}.
	
	\noindent {\bf (vi)}  This step is to show the relation between $\|\bsx_{k+1}-\bsa_{k+1}\|^2$ and $\|\bsx_{k}-\bsa_{k}\|^2$.	
	Denote $\mathcal{C}_r(\cdot)=\mathcal{C}(\cdot)/r$, then we have
	\begin{align}
		\mathbf{E}_{\mathcal{C}}[\|\bsx_{k+1}-\bsa_{k+1}\|^2]
		&=\mathbf{E}_{\mathcal{C}}[\|\bsx_{k+1}-\bsx_{k}+\bsx_{k}-\bsa_{k}-\psi\bsq_k\|^2]\nonumber\\
		&=\mathbf{E}_{\mathcal{C}}[\|\bsx_{k+1}-\bsx_{k}+(1-\psi r)(\bsx_{k}-\bsa_{k})\nonumber\\
		&\quad+\psi r(\bsx_{k}-\bsa_{k}-\mathcal{C}_r(\bsx_{k}-\bsa_{k}))\|^2]\nonumber\\
		&\le(1+c_3^{-1})\mathbf{E}_{\mathcal{C}}[\|\bsx_{k+1}-\bsx_{k}\|^2]\nonumber\\
		&\quad+(1+c_3)\mathbf{E}_{\mathcal{C}}[\|(1-\psi r)(\bsx_{k}-\bsa_{k})\nonumber\\
		&\quad+\psi r(\bsx_{k}-\bsa_{k}-\mathcal{C}_r(\bsx_{k}-\bsa_{k}))\|^2]\nonumber\\
		&\le(1+c_3^{-1})\mathbf{E}_{\mathcal{C}}[\|\bsx_{k+1}-\bsx_{k}\|^2]\nonumber\\
		&\quad+(1+c_3)(1-\psi r)\mathbf{E}_{\mathcal{C}}[\|\bsx_{k}-\bsa_{k}\|^2]\nonumber\\
		&\quad+(1+c_3)\psi r\mathbf{E}_{\mathcal{C}}[\|\bsx_{k}-\bsa_{k}-\mathcal{C}_r(\bsx_{k}-\bsa_{k})\|^2]\nonumber\\
		&\le(1+c_3^{-1})\mathbf{E}_{\mathcal{C}}[\|\bsx_{k+1}-\bsx_{k}\|^2]\nonumber\\
		&\quad+(1+c_3)(1-\psi r)\mathbf{E}_{\mathcal{C}}[\|\bsx_{k}-\bsa_{k}\|^2]\nonumber\\
		&\quad+(1+c_3)\psi r(1-\varphi)\mathbf{E}_{\mathcal{C}}[\|\bsx_{k}-\bsa_{k}\|^2]\nonumber\\
		&=(1+c_3^{-1})\mathbf{E}_{\mathcal{C}}[\|\bsx_{k+1}-\bsx_{k}\|^2]\nonumber\\
		&\quad+(1-c_3-2c_3^2)\mathbf{E}_{\mathcal{C}}[\|\bsx_{k}-\bsa_{k}\|^2],
		\label{nonconvex:xminush_compress}
	\end{align}
	where the first and second equalities hold due to \eqref{nonconvex:kia-algo-dc-compact-a} and \eqref{nonconvex:kia-algo-dc-compact-q}, respectively; the first inequality holds due to \eqref{nonconvex:lemma:inequality-arithmetic-equ} and $c_3>0$; the second inequality holds due to \eqref{nonconvex:lemma:inequality-arithmetic-equ} and $\psi r\in(0,1]$; and the last inequality holds due to \eqref{nonconvex:ass:compression_equ_scaling}.
	
	We have
	\begin{align}\label{nonconvex:xkoneminusx}
		\|\bsx_{k+1}-\bsx_k\|^2&=\eta^2\|\alpha\bsL\hat{\bsx}_k+\beta\bm{v}_k+\bsg_k\|^2\nonumber\\
		&=\eta^2\|\alpha\bsL(\hat{\bsx}_k-\bsx_k)+\alpha\bsL\bsx_k+\beta\bm{v}_k
		+\bsg_k^0+\bsg_k-\bsg_k^0\|^2\nonumber\\
		&\le4\eta^2(\alpha^2\|\hat{\bsx}_k-\bsx_k\|^2_{\bsL^2}+\alpha^2\|\bsx_k\|^2_{\bsL^2}+\|\beta\bm{v}_k+\bsg_k^0\|^2+\|\bsg_k-\bsg_k^0\|^2)\nonumber\\
		&\le4\eta^2\Big(\alpha^2\rho^2(L)\|\hat{\bsx}_k-\bsx_k\|^2+\|\bsx_k\|^2_{(\alpha^2\rho^2(L)+L_f^2)\bsK}+\Big\|\bm{v}_k+\frac{1}{\beta}\bsg_k^0\Big\|^2_{\beta^2\rho^2(L)\bsP}\Big),
	\end{align}
	where the first equality holds due to \eqref{nonconvex:kia-algo-dc-compact-x}; the first inequality holds due to \eqref{nonconvex:lemma:inequality-arithmetic-equ}; and the second inequality holds due to \eqref{nonconvex:KL-L-eq2}, \eqref{nonconvex:lemma-eq5}, and \eqref{nonconvex:gg1}.

	From \eqref{nonconvex:xminush_compress}, \eqref{nonconvex:xkoneminusx}, and \eqref{nonconvex:xminusxhat}, we have
	\begin{align}
		\mathbf{E}_{\mathcal{C}}[\|\bsx_{k+1}-\bsa_{k+1}\|^2]
		&\le\mathbf{E}_{\mathcal{C}}\Big[(1-c_4)\|\bsx_{k}-\bsa_{k}\|^2
		+\|\bsx_k\|^2_{4\eta^2(1+c_3^{-1})(\alpha^2\rho^2(L)+L_f^2)\bsK}\nonumber\\
		&\quad+\Big\|\bm{v}_k+\frac{1}{\beta}\bsg_k^0\Big\|^2_{4\eta^2(1+c_3^{-1})\beta^2\rho^2(L)\bsP}\Big].
		\label{nonconvex:xminush}
	\end{align}
	
	\noindent {\bf (vii)}  This step is to show the relation between $V_{k+1}$ and $V_{k}$.		
	We have
\begin{align}
	\mathbf{E}_{\mathcal{C}}[V_{k+1}]
	&\le  \mathbf{E}_{\mathcal{C}}\Big[V_{k}-\|\bsx_k\|^2_{\frac{\eta\alpha}{2}\bsL-\frac{\eta}{2}\bsK
		-\frac{\eta}{2}(1+3\eta)L_f^2\bsK}+\|\hat{\bm{x}}_k\|^2_{\frac{3\eta^2\alpha^2}{2}\bsL^2}\nonumber\\
	&\quad+\Big\|\bm{v}_k+\frac{1}{\beta}\bsg_k^0\Big\|^2_{\frac{6\eta^2\beta^2\rho(L)+\eta\beta}{4}\bsP}\nonumber\\
	&\quad+\frac{\eta}{2}(\alpha+2\beta)\rho(L)r_0\|\bsx_k-\bsa_{k}\|^2\nonumber\\
	&\quad+\|\hat{\bsx}\|^2_{\frac{\eta^2\beta}{2}(\alpha+\beta)\bsL+\frac{\eta^2}{2}\bsK}
	+\Big\|\bm{v}_k+\frac{1}{\beta}\bsg_{k}^0\Big\|^2_{\frac{\eta\beta}{4}\bsP}\nonumber\\
	&\quad+\eta^2 c_1L_f^2\|\bar{\bsg}_k\|^2+\|\hat{\bm{x}}_k\|^2_{\eta\beta\bsK
		+\eta^2(\beta^2\bsK-\alpha\beta\bsL)}\nonumber\\
	&\quad+\|\bm{x}_k\|^2_{\frac{\eta(\beta+2)}{4}\bsK+\frac{\eta}{4}(1+6\eta)L_f^2\bsK}
	+\Big(\eta^2c_2L_f^2+\frac{\eta}{8}\Big)\|\bar{\bsg}_k\|^2\nonumber\\
	&\quad-\Big\|\bm{v}_k+\frac{1}{\beta}\bsg_{k}^0\Big\|^2_{\eta(\beta-3\rho^{-1}_2(L))\bsP
		-\eta^2(\rho^{-1}_2(L)+\frac{\beta^2}{2}\rho(L))\bsP}\nonumber\\
	&\quad-\frac{\eta}{4}(1-2\eta L_f)\|\bar{\bsg}_{k}\|^2
	+\|\bsx_k\|^2_{\frac{\eta}{2}L_f^2\bsK}-\frac{\eta}{4}\|\bar{\bsg}_{k}^0\|^2\nonumber\\
	&\quad-c_4\|\bsx_{k}-\bsa_{k}\|^2+\|\bsx_k\|^2_{4\eta^2(1+c_3^{-1})(\alpha^2\rho^2(L)+L_f^2)\bsK}\nonumber\\
	&\quad+\Big\|\bm{v}_k+\frac{1}{\beta}\bsg_k^0\Big\|^2_{4\eta^2(1+c_3^{-1})\beta^2\rho^2(L)\bsP}\Big]\nonumber\\
	&=\mathbf{E}_{\mathcal{C}}\Big[V_{k}-\|\bsx_k\|^2_{\eta\bsM_1-\eta^2c_6\bsK}+\|\hat{\bm{x}}_k\|^2_{\eta\beta\bsK
		+\eta^2\bsM_2}\nonumber\\
	&\quad-\Big\|\bm{v}_k+\frac{1}{\beta}\bsg_k^0\Big\|^2_{\eta(\epsilon_3-\eta\epsilon_4)\bsP}
	-\eta(\epsilon_5-\eta\epsilon_6)\|\bar{\bsg}_{k}\|^2\nonumber\\
	&\quad-\Big(c_4-\frac{\eta}{2}(\alpha+2\beta)\rho(L)r_0\Big)\|\bsx_{k}-\bsa_{k}\|^2
	-\frac{\eta}{4}\|\bar{\bsg}_{k}^0\|^2\Big]\nonumber\\
	&\le  \mathbf{E}_{\mathcal{C}}\Big[V_{k}-\|\bsx_k\|^2_{\eta(c_5-\eta c_6)\bsK}+\|\hat{\bm{x}}_k\|^2_{\eta(\beta+\eta c_7)\bsK}\nonumber\\
	&\quad-\Big\|\bm{v}_k+\frac{1}{\beta}\bsg_k^0\Big\|^2_{\eta(\epsilon_3-\eta\epsilon_4)\bsP}
	-\eta(\epsilon_5-\eta\epsilon_6)\|\bar{\bsg}_{k}\|^2\nonumber\\
	&\quad-\Big(c_4-\frac{\eta}{2}(\alpha+2\beta)\rho(L)r_0\Big)\|\bsx_{k}-\bsa_{k}\|^2
	-\frac{\eta}{4}\|\bar{\bsg}_{k}^0\|^2\Big],\label{nonconvex:vkLya_xhat}
\end{align}
	where the first inequality holds due to \eqref{nonconvex:v1k}--\eqref{nonconvex:v4k-1}, \eqref{nonconvex:xminusxhat}, and \eqref{nonconvex:xminush}; and the second inequality holds due to \eqref{nonconvex:KL-L-eq2} and $\beta\le\alpha$.
	
	For the third term in the right-hand side of \eqref{nonconvex:vkLya_xhat}, from \eqref{nonconvex:lemma:inequality-arithmetic-equ} and $\rho(\bsK)=1$, we have
	\begin{align}\label{nonconvex:xhat}
		\|\hat{\bm{x}}_k\|^2_{\bsK}
		&=\|\hat{\bm{x}}_k-\bsx_k+\bsx_k\|^2_{\bsK}\le2\|\hat{\bm{x}}_k-\bsx_k\|^2+2\|\bsx_k\|^2_{\bsK}.
	\end{align}
	From \eqref{nonconvex:xminusxhat}, \eqref{nonconvex:vkLya_xhat} and \eqref{nonconvex:xhat}, we know that \eqref{nonconvex:vkLya-sm2} holds.
\end{proof}

We are now ready to prove Theorem~\ref{nonconvex:thm-sm}.

\noindent {\bf (i)}  We first show that all of the used constants are positive.

From $\alpha=\kappa_1\beta$, $\kappa_1\ge\frac{9+\kappa_4}{2\rho_2(L)}$, $\kappa_4>0$, and $\beta>\kappa_2\ge\frac{4+5L_f^2}{\kappa_4}$, we have
\begin{align}\label{nonconvex:beta1}
	\epsilon_1&=\frac{\kappa_1\beta}{2}\rho_2(L)-\frac{1}{4}(9\beta+4+5L_f^2)>\frac{\kappa_1\beta}{2}\rho_2(L)-\frac{1}{4}(9\beta+\kappa_4\beta)\ge0.
\end{align}
From $\beta>\kappa_2\ge\frac{6}{\rho_2(L)}$, we have
\begin{align}\label{nonconvex:beta2}
	\epsilon_3>0.
\end{align}
From $\alpha=\kappa_1\beta$ and $\beta>\kappa_2=\max\{\kappa_5,~\sqrt{\kappa_6}\}$, we have
\begin{align}\label{nonconvex:beta3}
	\epsilon_5&=\frac{1}{8}-\frac{(\kappa_1+1)^2L_f^2}{\beta^3\rho_2(L)}
	-\frac{L_f^2}{2\beta^2\rho^2_2(L)}>\frac{1}{8}-\frac{(\kappa_1+1)^2L_f^2}{\kappa_5\beta^2\rho_2(L)}
	-\frac{L_f^2}{2\beta^2\rho^2_2(L)}>0.
\end{align}

From \eqref{nonconvex:beta1}--\eqref{nonconvex:beta3},  and $0<\eta<\kappa_3$, we have
\begin{subequations}
	\begin{align}
		&\eta(\epsilon_1-\eta\epsilon_2)>0,\label{nonconvex:vkLya1.1}\\
		&\eta(\epsilon_3-\eta\epsilon_4)>0,\label{nonconvex:vkLya1.2}\\
		&\eta(\epsilon_5-\eta\epsilon_6)>0,\label{nonconvex:vkLya1}\\
		&\epsilon_7-\eta\epsilon_8-\eta^2\epsilon_9>0.\label{nonconvex:vkLya1compress}
	\end{align}
\end{subequations}

\noindent {\bf (ii)}  We then show that \eqref{nonconvex:thm-sm-equ1} and \eqref{nonconvex:thm-sm-equ2} hold.

From \eqref{nonconvex:lemma:inequality-arithmetic-equ}, we have
\begin{subequations}
	\begin{align}
		V_{k}&\ge\frac{1}{2}\|\bsx_{k}\|^2_{\bsK}
		+\frac{1}{2}\Big(1+\frac{\alpha}{\beta}\Big)\Big\|\bsv_k+\frac{1}{\beta}\bsg_k^0\Big\|^2_{\bsP}\nonumber\\
		&\quad-\frac{\beta}{2\alpha\rho_2(L)}\|\bsx_{k}\|^2_{\bsK}
		-\frac{\alpha}{2\beta}\Big\|\bsv_k+\frac{1}{\beta}\bsg_k^0\Big\|^2_{\bsP}+n(f(\bar{x}_k)-f^*)+\|\bsx_{k}-\bsa_{k}\|^2\nonumber\\
		&\ge\epsilon_{10}\Big(\|\bsx_{k}\|^2_{\bsK}
		+\Big\|\bsv_k+\frac{1}{\beta}\bsg_k^0\Big\|^2_{\bsP}\Big)+n(f(\bar{x}_k)-f^*)+\|\bsx_{k}-\bsa_{k}\|^2\label{nonconvex:vkLya3.2}\\
		&\ge\epsilon_{10}\hat{V}_k\ge0.\label{nonconvex:vkLya3}
	\end{align}
\end{subequations}

From \eqref{nonconvex:vkLya-sm2} and \eqref{nonconvex:vkLya1.2}--\eqref{nonconvex:vkLya1compress}, we have
\begin{align}\label{nonconvex:vkLya-sm4}
	&\mathbf{E}_{\mathcal{C}}[V_{T+1}]\le  V_{0}-\sum_{k=0}^{T}\mathbf{E}_{\mathcal{C}}[\|\bsx_k\|^2_{\eta(\epsilon_1-\eta\epsilon_2)\bsK}]
	-\sum_{k=0}^{T}\mathbf{E}_{\mathcal{C}}\Big[\frac{\eta}{4}\|\bar{\bsg}_{k}^0\|^2\Big].
\end{align}

From \eqref{nonconvex:vkLya-sm4}, \eqref{nonconvex:vkLya1.1}, and \eqref{nonconvex:vkLya3}, we have
\begin{align*}
	\sum_{k=0}^{T}\mathbf{E}_{\mathcal{C}}[\|\bsx_k\|^2_{\bsK}+\|\bar{\bsg}_{k}^0\|^2]
	\le\frac{V_0}{\min\{\eta(\epsilon_1-\eta\epsilon_2),~\frac{\eta}{4}\}},
\end{align*}
which yields \eqref{nonconvex:thm-sm-equ1}.

From \eqref{nonconvex:vkLya-sm4}, \eqref{nonconvex:vkLya1.1}, and \eqref{nonconvex:vkLya3.2}, we have
\begin{align*}
	\mathbf{E}_{\mathcal{C}}[n(f(\bar{x}_T)-f^*)]\le \mathbf{E}_{\mathcal{C}}[V_T]\le V_0,
\end{align*}
which yields \eqref{nonconvex:thm-sm-equ2}.

\subsection{Proof of Theorem~\ref{nonconvex:thm-ft}}\label{nonconvex:proof-thm-ft}
In this proof, in addition to the notations used in the proof of Theorem~\ref{nonconvex:thm-sm}, we also denote
\begin{align*}
	\epsilon&=\frac{\epsilon_{12}}{\epsilon_{11}},~
	\epsilon_{11}=\max\Big\{\frac{1}{2}+\frac{\alpha}{\beta},
	~\frac{\alpha\rho_2(L)+\beta}{2\alpha\rho_2(L)}\Big\},\\
	\epsilon_{12}&=\eta\min\Big\{\epsilon_1-\eta\epsilon_2,~\epsilon_3-\eta\epsilon_4,
	~\frac{\nu}{2},~\frac{\epsilon_7}{\eta}-\epsilon_8-\eta\epsilon_9\Big\}.
\end{align*}

\noindent {\bf (i)}  We first show that $\epsilon\in(0,1)$.

From \eqref{nonconvex:vkLya1.1}--\eqref{nonconvex:vkLya1compress}, we have
\begin{align}\label{nonconvex:vkLya1.4}
	\epsilon_{12}>0~\text{and}~\epsilon=\frac{\epsilon_{12}}{\epsilon_{11}}>0.
\end{align}

Noting that $\epsilon_{11}\ge\frac{1}{2}+\frac{\alpha}{\beta}\ge\frac{3}{2}$, and $\epsilon_{12}<\epsilon_{7}=\frac{\varphi\psi r}{2}(1+\varphi\psi r)<1$ due to $\varphi\psi r\in(0,1)$, we have
\begin{align}\label{nonconvex:vkLya-epsilon}
	0<\epsilon=\frac{\epsilon_{12}}{\epsilon_{11}}<1.
\end{align}

\noindent {\bf (ii)}  We then show that \eqref{nonconvex:thm-ft-equ1} holds.

From \eqref{nonconvex:lemma:inequality-arithmetic-equ}, we have
\begin{align}\label{nonconvex:vkLya3.1}
	V_k\le\epsilon_{11}\hat{V}_k.
\end{align}
From Assumptions~\ref{nonconvex:ass:optset} and \ref{nonconvex:ass:fil} as well as \eqref{nonconvex:equ:plc}, we have that
\begin{align}\label{nonconvex:gg3}
	\|\bar{\bsg}^0_k\|^2=n\|\nabla f(\bar{x}_k)\|^2\ge2\nu n(f(\bar{x}_k)-f^*).
\end{align}
Then, from \eqref{nonconvex:vkLya-sm2},  \eqref{nonconvex:vkLya1}, \eqref{nonconvex:vkLya1.4}, and \eqref{nonconvex:vkLya3.1}--\eqref{nonconvex:gg3}, we have
\begin{align}\label{nonconvex:vkLya2.1}
	\mathbf{E}_{\mathcal{C}}[V_{k+1}]
	\le \mathbf{E}_{\mathcal{C}}[V_{k}-\epsilon_{12}\hat{V}_k]
	\le \mathbf{E}_{\mathcal{C}}\Big[V_{k}-\frac{\epsilon_{12}}{\epsilon_{11}}V_{k}\Big].
\end{align}
Hence, from \eqref{nonconvex:vkLya2.1} and  \eqref{nonconvex:vkLya-epsilon}, we have
\begin{align*}
	\mathbf{E}_{\mathcal{C}}[V_{k+1}]&\le  (1-\epsilon)\mathbf{E}_{\mathcal{C}}[V_k]\le (1-\epsilon)^{k+1}V_0,
\end{align*}
which yields \eqref{nonconvex:thm-ft-equ1}.

\subsection{Proof of Theorem~\ref{nonconvex:thm-sm_ef}}\label{nonconvex:proof-thm-sm_ef}
In this proof, in addition to the notations used in the proof of Theorem~\ref{nonconvex:thm-sm}, we also denote
\begin{align*}
\kappa_0&=\frac{1}{\sqrt{r_0r_1}},~
\check{\kappa}_3=\min\Big\{\frac{\epsilon_1}{\epsilon_2},~\frac{\epsilon_3}{\epsilon_4},
~\frac{\epsilon_5}{\epsilon_6},~\tau_1,~\tau_2\Big\},\\
\check{\epsilon}_8&=((\alpha+2\beta)\rho(L)+4\beta)(r_0r_1+1)\sigma^2,\\
\check{\epsilon}_9&=2(7\alpha^2\rho^2(L)+2\beta^2+1)(r_0r_1+1)\sigma^2,\\
\epsilon_{10}&=\frac{\alpha\rho_2(L)-\beta}{2\alpha\rho_2(L)},\\
\check{c}_4&=c_3+2c_3^2-16\eta^2(1+c_3^{-1})\alpha^2\rho^2(L)r_0r_2,\\
r_1&>1,~r_2=\frac{r_1}{r_1-1},~\tau_0\in(0,\frac{\epsilon_7}{r_0r_2}),\\
\tau_1&=\frac{\sqrt{r_2^2\epsilon_8^2+2r_2\epsilon_9(\epsilon_7-r_0r_2\tau_0)}
	-r_2\epsilon_8}{2r_2\epsilon_9},\\
\tau_2&=\frac{\sqrt{\check{\epsilon}_8^2+4(1-r_0r_1\sigma^2)\tau_0\check{\epsilon}_9}
	-\check{\epsilon}_8}{2\check{\epsilon}_9},\\
\tau_3&=\epsilon_7-r_0r_2\tau_0-2\eta r_2\epsilon_8-2\eta^2 r_2\epsilon_9,\\
\tau_4&=(1-r_0r_1\sigma^2)\tau_0-\eta\check{\epsilon}_8
-\eta^2\check{\epsilon}_9,\\
W_{k}&=V_{k}+\tau_0\|\bse_{k}\|^2,~\hat{W}_k=\hat{V}_k+\tau_0\|\bse_{k}\|^2.
\end{align*}

\noindent {\bf (i)}  We first show that all of the used constants are positive.

Noting that the settings on $\alpha$ and $\beta$ in both Theorems~\ref{nonconvex:thm-sm} and  \ref{nonconvex:thm-sm_ef} are the same, \eqref{nonconvex:beta1}--\eqref{nonconvex:beta3} still hold. From \eqref{nonconvex:beta1}--\eqref{nonconvex:beta3} and $\eta\in(0,\check{\kappa}_3)$, we know that \eqref{nonconvex:vkLya1.1}--\eqref{nonconvex:vkLya1} still hold.

From $\tau_0\in(0,\frac{\epsilon_7}{r_0r_2})$, we have
\begin{align}\label{nonconvex:epsilon_7}
\epsilon_7-r_0r_2\tau_0>0~\text{and}~\tau_1>0.
\end{align}
Then, from \eqref{nonconvex:epsilon_7} and $\eta\in(0,\tau_1)$, we have
\begin{align}\label{nonconvex:vkLya1compress_ef}
\tau_3=\epsilon_7-r_0r_2\tau_0-2\eta r_2\epsilon_8-2\eta^2 r_2\epsilon_9>0.
\end{align}

From $\sigma\in(0,\kappa_0)$ and $\tau_0>0$ we have
\begin{align}\label{nonconvex:eta_0}
(1-r_0r_1\sigma^2)\tau_0>0~\text{and}~\tau_2>0.
\end{align}
Then, from \eqref{nonconvex:eta_0} and $\eta\in(0,\tau_2)$, we have
\begin{align}\label{nonconvex:vkLya1compress_ef2}
\tau_4=(1-r_0r_1\sigma^2)\tau_0-\eta\check{\epsilon}_8-\eta^2\check{\epsilon}_9>0.
\end{align}

\noindent {\bf (ii)}  We then show that \eqref{nonconvex:thm-sm_ef-equ1} and \eqref{nonconvex:thm-sm_ef-equ2} hold.

Noting that the compact form of \eqref{nonconvex:kia-algo-dc_ef-x} and \eqref{nonconvex:kia-algo-dc_ef-v}  respectively can be rewritten as \eqref{nonconvex:kia-algo-dc-compact-x} and \eqref{nonconvex:kia-algo-dc-compact-v}, we know that \eqref{nonconvex:v1k}--\eqref{nonconvex:v4k-1}, and \eqref{nonconvex:xkoneminusx} still hold. Moreover, \eqref{nonconvex:xminush_compress} still holds since the  compact form of \eqref{nonconvex:kia-algo-dc_ef-a} and \eqref{nonconvex:kia-algo-dc_ef-q} is \eqref{nonconvex:kia-algo-dc-compact-a} and \eqref{nonconvex:kia-algo-dc-compact-q}, respectively.

From \eqref{nonconvex:kia-algo-dc_ef-qhat}, \eqref{nonconvex:kia-algo-dc_ef-e}, and \eqref{nonconvex:ass:compression_equ}, we have
\begin{align}\label{nonconvex:ek}
\mathbf{E}_{\mathcal{C}}[\|\bse_{k+1}\|^2]
&\le r_0\|\sigma\bse_k+\bsx_k-\bsa_{k}\|^2\le r_0r_1\sigma^2\|\bse_k\|^2+r_0r_2\|\bsx_k-\bsa_{k}\|^2.
\end{align}

We have
\begin{align}\label{nonconvex:xminusxhat_ef}
\mathbf{E}_{\mathcal{C}}[\|\bsx_k-\hat{\bsx}_k\|^2]
&=\mathbf{E}_{\mathcal{C}}[\|\bse_{k+1}-\sigma\bse_{k}\|^2]\nonumber\\
&\le \mathbf{E}_{\mathcal{C}}[2\|\bse_{k+1}\|^2+2\sigma^2\|\bse_{k}\|^2]\nonumber\\
&\le\mathbf{E}_{\mathcal{C}}[2(r_0r_1+1)\sigma^2\|\bse_{k}\|^2+2r_0r_2\|\bsx_k-\bsa_{k}\|^2],
\end{align}
where the first equality holds due to \eqref{nonconvex:kia-algo-dc_ef-compact-xhat} and \eqref{nonconvex:kia-algo-dc_ef-e}; and the last inequality holds due to \eqref{nonconvex:ek}.

From \eqref{nonconvex:xminush_compress}, \eqref{nonconvex:xkoneminusx}, \eqref{nonconvex:xminusxhat_ef}, we have
\begin{align}
\mathbf{E}_{\mathcal{C}}[\|\bsx_{k+1}-\bsa_{k+1}\|^2]
&\le\mathbf{E}_{\mathcal{C}}\Big[(1-\check{c}_4)\|\bsx_{k}-\bsa_{k}\|^2
+\|\bsx_k\|^2_{4\eta^2(1+c_3^{-1})(\alpha^2\rho^2(L)+L_f^2)\bsK}\nonumber\\
&\quad+\Big\|\bm{v}_k+\frac{1}{\beta}\bsg_k^0\Big\|^2_{4\eta^2(1+c_3^{-1})\beta^2\rho^2(L)\bsP}
+8\eta^2\alpha^2\rho^2(L)(r_0r_1+1)\sigma^2\|\bse_{k}\|^2\Big].
\label{nonconvex:xminush_ef}
\end{align}

Similar to the way to get \eqref{nonconvex:vkLya_xhat}, from \eqref{nonconvex:v1k}--\eqref{nonconvex:v4k-1}, \eqref{nonconvex:xminusxhat_ef}, and \eqref{nonconvex:xminush_ef}, we have
\begin{align}
\mathbf{E}_{\mathcal{C}}[V_{k+1}]
&\le  \mathbf{E}_{\mathcal{C}}\Big[V_{k}-\|\bsx_k\|^2_{\frac{\eta\alpha}{2}\bsL-\frac{\eta}{2}\bsK
-\frac{\eta}{2}(1+3\eta)L_f^2\bsK}+\|\hat{\bm{x}}_k\|^2_{\frac{3\eta^2\alpha^2}{2}\bsL^2}\nonumber\\
&\quad+\Big\|\bm{v}_k+\frac{1}{\beta}\bsg_k^0\Big\|^2_{\frac{6\eta^2\beta^2\rho(L)+\eta\beta}{4}\bsP}\nonumber\\
&\quad+\eta(\alpha+2\beta)\rho(L)((r_0r_1+1)\sigma^2\|\bse_{k}\|^2+r_0r_2\|\bsx_k-\bsa_{k}\|^2)\nonumber\\
&\quad+\|\hat{\bsx}\|^2_{\frac{\eta^2\beta}{2}(\alpha+\beta)\bsL+\frac{\eta^2}{2}\bsK}
+\Big\|\bm{v}_k+\frac{1}{\beta}\bsg_{k}^0\Big\|^2_{\frac{\eta\beta}{4}\bsP}\nonumber\\
&\quad+\eta^2 c_1L_f^2\|\bar{\bsg}_k\|^2+\|\hat{\bm{x}}_k\|^2_{\eta\beta\bsK
+\eta^2(\beta^2\bsK-\alpha\beta\bsL)}\nonumber\\
&\quad+\|\bm{x}_k\|^2_{\frac{\eta(\beta+2)}{4}\bsK+\frac{\eta}{4}(1+6\eta)L_f^2\bsK}
+\Big(\eta^2c_2L_f^2+\frac{\eta}{8}\Big)\|\bar{\bsg}_k\|^2\nonumber\\
&\quad-\Big\|\bm{v}_k+\frac{1}{\beta}\bsg_{k}^0\Big\|^2_{\eta(\beta-3\rho^{-1}_2(L))\bsP
-\eta^2(\rho^{-1}_2(L)+\frac{\beta^2}{2}\rho(L))\bsP}\nonumber\\
&\quad-\frac{\eta}{4}(1-2\eta L_f)\|\bar{\bsg}_{k}\|^2
+\|\bsx_k\|^2_{\frac{\eta}{2}L_f^2\bsK}-\frac{\eta}{4}\|\bar{\bsg}_{k}^0\|^2\nonumber\\
&\quad-\check{c}_4\|\bsx_{k}-\bsa_{k}\|^2+\|\bsx_k\|^2_{4\eta^2(1+c_3^{-1})(\alpha^2\rho^2(L)+L_f^2)\bsK}\nonumber\\
&\quad+\Big\|\bm{v}_k+\frac{1}{\beta}\bsg_k^0\Big\|^2_{4\eta^2(1+c_3^{-1})\beta^2\rho^2(L)\bsP}\nonumber\\
&\quad+8\eta^2\alpha^2\rho^2(L)(r_0r_1+1)\sigma^2\|\bse_{k}\|^2\Big]\nonumber\\
&=\mathbf{E}_{\mathcal{C}}\Big[V_{k}-\|\bsx_k\|^2_{\eta\bsM_1-\eta^2c_6\bsK}+\|\hat{\bm{x}}_k\|^2_{\eta\beta\bsK
+\eta^2\bsM_2}\nonumber\\
&\quad-\Big\|\bm{v}_k+\frac{1}{\beta}\bsg_k^0\Big\|^2_{\eta(\epsilon_3-\eta\epsilon_4)\bsP}
-\eta(\epsilon_5-\eta\epsilon_6)\|\bar{\bsg}_{k}\|^2\nonumber\\
&\quad-\Big(\check{c}_4-2\eta(\alpha+2\beta)\rho(L)r_0r_2\Big)\|\bsx_{k}-\bsa_{k}\|^2
-\frac{\eta}{4}\|\bar{\bsg}_{k}^0\|^2\nonumber\\
&\quad+\eta(\alpha+2\beta+8\alpha^2\rho(L)\eta)\rho(L)(r_0r_1+1)\sigma^2\|\bse_{k}\|^2\Big]\nonumber\\
&\le  \mathbf{E}_{\mathcal{C}}\Big[V_{k}-\|\bsx_k\|^2_{\eta(c_5-\eta c_6)\bsK}+\|\hat{\bm{x}}_k\|^2_{\eta(\beta+\eta c_7)\bsK}\nonumber\\
&\quad-\Big\|\bm{v}_k+\frac{1}{\beta}\bsg_k^0\Big\|^2_{\eta(\epsilon_3-\eta\epsilon_4)\bsP}
-\eta(\epsilon_5-\eta\epsilon_6)\|\bar{\bsg}_{k}\|^2\nonumber\\
&\quad-(\check{c}_4-2\eta(\alpha+2\beta)\rho(L)r_0r_2)\|\bsx_{k}-\bsa_{k}\|^2
-\frac{\eta}{4}\|\bar{\bsg}_{k}^0\|^2\nonumber\\
&\quad+\eta(\alpha+2\beta+8\alpha^2\rho(L)\eta)\rho(L)(r_0r_1+1)\sigma^2\|\bse_{k}\|^2\Big].
\label{nonconvex:vkLya_xhat_ef}
\end{align}

From \eqref{nonconvex:ek}, \eqref{nonconvex:xminusxhat_ef}, \eqref{nonconvex:vkLya_xhat_ef} and \eqref{nonconvex:xhat}, we have
\begin{align}\label{nonconvex:vkLya-sm2_ef}
\mathbf{E}_{\mathcal{C}}[W_{k+1}]
&\le  \mathbf{E}_{\mathcal{C}}\Big[W_{k}-\frac{\eta}{4}\|\bar{\bsg}_{k}^0\|^2
-\|\bsx_k\|^2_{\eta(\epsilon_1-\eta \epsilon_2)\bsK}\nonumber\\
&\quad-\Big\|\bm{v}_k+\frac{1}{\beta}\bsg_k^0\Big\|^2_{\eta(\epsilon_3-\eta\epsilon_4)\bsP}
-\eta(\epsilon_5-\eta\epsilon_6)\|\bar{\bsg}_{k}\|^2-\tau_3\|\bsx_{k}-\bsa_{k}\|^2-\tau_4\|\bse_k\|^2\Big].
\end{align}

Same as the way to get \eqref{nonconvex:vkLya3.2} and \eqref{nonconvex:vkLya3}, we have
\begin{subequations}
\begin{align}
W_{k}
&\ge\epsilon_{10}\Big(\|\bsx_{k}\|^2_{\bsK}
+\Big\|\bsv_k+\frac{1}{\beta}\bsg_k^0\Big\|^2_{\bsP}\Big)
+n(f(\bar{x}_k)-f^*)+\|\bsx_{k}-\bsa_{k}\|^2+\tau_0\|\bse_{k}\|^2\label{nonconvex:vkLya3.2_ef}\\
&\ge\epsilon_{10}\hat{W}_k\ge0.\label{nonconvex:vkLya3_ef}
\end{align}
\end{subequations}

From \eqref{nonconvex:vkLya-sm2_ef}, \eqref{nonconvex:vkLya1.2}--\eqref{nonconvex:vkLya1} \eqref{nonconvex:vkLya1compress_ef}, and \eqref{nonconvex:vkLya1compress_ef2}, we have
\begin{align}\label{nonconvex:vkLya-sm4_ef}
&\mathbf{E}_{\mathcal{C}}[W_{T+1}]\le  W_{0}-\sum_{k=0}^{T}\mathbf{E}_{\mathcal{C}}[\|\bsx_k\|^2_{\eta(\epsilon_1-\eta\epsilon_2)\bsK}]
-\sum_{k=0}^{T}\mathbf{E}_{\mathcal{C}}\Big[\frac{\eta}{4}\|\bar{\bsg}_{k}^0\|^2\Big].
\end{align}

From \eqref{nonconvex:vkLya-sm4_ef}, \eqref{nonconvex:vkLya1.1}, and \eqref{nonconvex:vkLya3_ef}, we have
\begin{align*}
\sum_{k=0}^{T}\mathbf{E}_{\mathcal{C}}[\|\bsx_k\|^2_{\bsK}+\|\bar{\bsg}_{k}^0\|^2]
\le\frac{W_0}{\min\{\eta(\epsilon_1-\eta\epsilon_2),~\frac{\eta}{4}\}},
\end{align*}
which yields \eqref{nonconvex:thm-sm_ef-equ1}.

From \eqref{nonconvex:vkLya-sm4_ef}, \eqref{nonconvex:vkLya1.1}, and \eqref{nonconvex:vkLya3.2_ef}, we have
\begin{align*}
\mathbf{E}_{\mathcal{C}}[n(f(\bar{x}_T)-f^*)]\le \mathbf{E}_{\mathcal{C}}[W_T]\le W_0,
\end{align*}
which yields \eqref{nonconvex:thm-sm_ef-equ2}.

\subsection{Proof of Theorem~\ref{nonconvex:thm-ft_ef}}\label{nonconvex:proof-thm-ft_ef}
In this proof, in addition to the notations used in the proofs of Theorems~\ref{nonconvex:thm-sm}--\ref{nonconvex:thm-sm_ef}, we also denote
\begin{align*}
\check{\epsilon}&=\frac{\check{\epsilon}_{12}}{\epsilon_{11}},~
\check{\epsilon}_{12}=\eta\min\Big\{\epsilon_1-\eta\epsilon_2,~\epsilon_3-\eta\epsilon_4,
~\frac{\nu}{2},~\frac{\tau_3}{\eta},~\frac{\tau_4}{\eta\tau_0}\Big\}.
\end{align*}

\noindent {\bf (i)}  We first show that $\check{\epsilon}\in(0,1)$.

From \eqref{nonconvex:vkLya1.2}--\eqref{nonconvex:vkLya1} \eqref{nonconvex:vkLya1compress_ef}, and \eqref{nonconvex:vkLya1compress_ef2}, we have
\begin{align}\label{nonconvex:vkLya1.4_ef}
\check{\epsilon}_{12}>0~\text{and}~\check{\epsilon}=\frac{\check{\epsilon}_{12}}{\epsilon_{11}}>0.
\end{align}

From $\tau_2<\epsilon_7$ and \eqref{nonconvex:vkLya-epsilon}, we have
\begin{align}\label{nonconvex:vkLya-epsilon_ef}
0<\check{\epsilon}=\frac{\check{\epsilon}_{12}}{\epsilon_{11}}\le\frac{\epsilon_{12}}{\epsilon_{11}}<1.
\end{align}

\noindent {\bf (ii)}  We then show that \eqref{nonconvex:thm-ft_ef-equ1} holds.

From \eqref{nonconvex:lemma:inequality-arithmetic-equ}, we have
\begin{align}\label{nonconvex:vkLya3.1_ef}
W_k\le\epsilon_{11}\hat{W}_k.
\end{align}

Then, from \eqref{nonconvex:vkLya-sm2_ef},  \eqref{nonconvex:vkLya1}, \eqref{nonconvex:vkLya1.4_ef}, \eqref{nonconvex:vkLya3.1_ef}, and\eqref{nonconvex:gg3}, we have
\begin{align}\label{nonconvex:vkLya2.1_ef}
\mathbf{E}_{\mathcal{C}}[W_{k+1}]
\le \mathbf{E}_{\mathcal{C}}[W_{k}-\check{\epsilon}_{12}\hat{W}_k]
\le \mathbf{E}_{\mathcal{C}}\Big[W_{k}-\frac{\check{\epsilon}_{12}}{\epsilon_{11}}W_{k}\Big].
\end{align}
Hence, from \eqref{nonconvex:vkLya2.1_ef} and  \eqref{nonconvex:vkLya-epsilon_ef}, we have
\begin{align*}
\mathbf{E}_{\mathcal{C}}[W_{k+1}]&\le  (1-\check{\epsilon})\mathbf{E}_{\mathcal{C}}[W_k]\le (1-\check{\epsilon})^{k+1}W_0,
\end{align*}
which yields \eqref{nonconvex:thm-ft_ef-equ1}.

\subsection{Proof of Theorem~\ref{nonconvex:thm-sm_quantization}}\label{nonconvex:proof-thm-sm_quantization}
In this proof, in addition to the notations used in the proof of Theorem~\ref{nonconvex:thm-sm}, we also denote
\begin{align*}
\tilde{\kappa}_3&=\min\Big\{\frac{\epsilon_1}{\tilde{\epsilon}_2},~\frac{\epsilon_3}{\tilde{\epsilon}_4},
~\frac{\epsilon_5}{\epsilon_6}\Big\},\\
\tilde{\epsilon}_2&=3L_f^2+2\beta^2+1+3\alpha^2\rho^2(L),\\
\tilde{\epsilon}_4&=2\beta^2\rho(L)+\rho_2^{-1}(L),\\
\tilde{\epsilon}_8&=\frac{1}{2}(\alpha+2\beta)\rho(L)+2\beta,\\
c_{8}&=\eta(\tilde{\epsilon}_8+2c_7\eta)n\tilde{d}^2Cs_0^2.
\end{align*}

\noindent {\bf (i)}  We first show that all of the used constants are positive.

Noting that the settings on $\alpha$ and $\beta$ in both Theorems~\ref{nonconvex:thm-sm} and  \ref{nonconvex:thm-sm_quantization} are the same, \eqref{nonconvex:beta1}--\eqref{nonconvex:beta3} still hold.
From \eqref{nonconvex:beta1}--\eqref{nonconvex:beta3},  and $0<\eta<\tilde{\kappa}_3$, we have
\begin{subequations}
\begin{align}
&\eta(\epsilon_1-\eta\tilde{\epsilon}_2)>0,\label{nonconvex:vkLya1.1_quantization}\\
&\eta(\epsilon_3-\eta\tilde{\epsilon}_4)>0,\label{nonconvex:vkLya1.2_quantization}\\
&\eta(\epsilon_5-\eta\epsilon_6)>0.\label{nonconvex:vkLya1_quantization}
\end{align}
\end{subequations}

\noindent {\bf (ii)}  We then show that \eqref{nonconvex:thm-sm-equ1_quantization} and \eqref{nonconvex:thm-sm-equ2_quantization} hold.

Noting that \eqref{nonconvex:kia-algo-dc-x_determin} and \eqref{nonconvex:kia-algo-dc-v_determin} can respectively be rewritten as \eqref{nonconvex:kia-algo-dc-x-compress} and \eqref{nonconvex:kia-algo-dc-v-compress}, we know that \eqref{nonconvex:v1k}--\eqref{nonconvex:v4k-1}, and \eqref{nonconvex:xkoneminusx} still hold. 

Similar to the way to get \eqref{nonconvex:vkLya_xhat}, from \eqref{nonconvex:v1k}--\eqref{nonconvex:v4k-1}, we have
\begin{align}\label{nonconvex:vkLya_xhat_determin}
U_{k+1}
&\le  U_{k}-\|\bsx_k\|^2_{\frac{\eta\alpha}{2}\bsL-\frac{\eta}{2}\bsK
-\frac{\eta}{2}(1+3\eta)L_f^2\bsK}+\|\hat{\bm{x}}_k\|^2_{\frac{3\eta^2\alpha^2}{2}\bsL^2}\nonumber\\
&\quad+\Big\|\bm{v}_k+\frac{1}{\beta}\bsg_k^0\Big\|^2_{\frac{6\eta^2\beta^2\rho(L)+\eta\beta}{4}\bsP}\nonumber\\
&\quad+\frac{\eta}{2}(\alpha+2\beta)\rho(L)\|\bsx_k-\hat{\bsx}_{k}\|^2\nonumber\\
&\quad+\|\hat{\bsx}\|^2_{\frac{\eta^2\beta}{2}(\alpha+\beta)\bsL+\frac{\eta^2}{2}\bsK}
+\Big\|\bm{v}_k+\frac{1}{\beta}\bsg_{k}^0\Big\|^2_{\frac{\eta\beta}{4}\bsP}\nonumber\\
&\quad+\eta^2 c_1L_f^2\|\bar{\bsg}_k\|^2+\|\hat{\bm{x}}_k\|^2_{\eta\beta\bsK
+\eta^2(\beta^2\bsK-\alpha\beta\bsL)}\nonumber\\
&\quad+\|\bm{x}_k\|^2_{\frac{\eta(\beta+2)}{4}\bsK+\frac{\eta}{4}(1+6\eta)L_f^2\bsK}
+\Big(\eta^2c_2L_f^2+\frac{\eta}{8}\Big)\|\bar{\bsg}_k\|^2\nonumber\\
&\quad-\Big\|\bm{v}_k+\frac{1}{\beta}\bsg_{k}^0\Big\|^2_{\eta(\beta-3\rho^{-1}_2(L))\bsP
-\eta^2(\rho^{-1}_2(L)+\frac{\beta^2}{2}\rho(L))\bsP}\nonumber\\
&\quad-\frac{\eta}{4}(1-2\eta L_f)\|\bar{\bsg}_{k}\|^2
+\|\bsx_k\|^2_{\frac{\eta}{2}L_f^2\bsK}-\frac{\eta}{4}\|\bar{\bsg}_{k}^0\|^2\nonumber\\
&=U_{k}-\|\bsx_k\|^2_{\eta\bsM_1-3\eta^2L_f^2\bsK}+\|\hat{\bm{x}}_k\|^2_{\eta\beta\bsK
+\eta^2\bsM_2}\nonumber\\
&\quad-\Big\|\bm{v}_k+\frac{1}{\beta}\bsg_k^0\Big\|^2_{\eta(\epsilon_3-\eta\tilde{\epsilon}_4)\bsP}
-\eta(\epsilon_5-\eta\epsilon_6)\|\bar{\bsg}_{k}\|^2\nonumber\\
&\quad+\frac{\eta}{2}(\alpha+2\beta)\rho(L)\|\bsx_{k}-\hat{\bsx}_{k}\|^2
-\frac{\eta}{4}\|\bar{\bsg}_{k}^0\|^2\nonumber\\
&\le  U_{k}-\|\bsx_k\|^2_{\eta(c_5-3\eta L_f^2)\bsK}+\|\hat{\bm{x}}_k\|^2_{\eta(\beta+\eta c_7)\bsK}\nonumber\\
&\quad-\Big\|\bm{v}_k+\frac{1}{\beta}\bsg_k^0\Big\|^2_{\eta(\epsilon_3-\eta\tilde{\epsilon}_4)\bsP}
-\eta(\epsilon_5-\eta\epsilon_6)\|\bar{\bsg}_{k}\|^2\nonumber\\
&\quad+\frac{\eta}{2}(\alpha+2\beta)\rho(L)\|\bsx_{k}-\hat{\bsx}_{k}\|^2
-\frac{\eta}{4}\|\bar{\bsg}_{k}^0\|^2.
\end{align}

From \eqref{nonconvex:xhat} and \eqref{nonconvex:vkLya_xhat_determin}, we have
\begin{align}\label{nonconvex:vkLya-sm2_determin}
U_{k+1}
&\le  U_{k}-\|\bsx_k\|^2_{\eta(\epsilon_1-\eta\tilde{\epsilon}_2)\bsK}
-\Big\|\bm{v}_k+\frac{1}{\beta}\bsg_k^0\Big\|^2_{\eta(\epsilon_3-\eta\tilde{\epsilon}_4)\bsP}\nonumber\\
&\quad-\eta(\epsilon_5-\eta\epsilon_6)\|\bar{\bsg}_{k}\|^2-\frac{\eta}{4}\|\bar{\bsg}_{k}^0\|^2
+\eta(\tilde{\epsilon}_8+2c_7\eta)\|\bsx_{k}-\hat{\bsx}_{k}\|^2.
\end{align}

From Lemma~\ref{nonconvex:lemma:pnorm}, we have
\begin{align}\label{nonconvex:xminusxhat_determin}
\|\bsx_{k}-\hat{\bsx}_{k}\|^2&=\sum_{i=1}^{n}\|x_{i,k}-\hat{x}_{i,k}\|^2
\le\sum_{i=1}^{n}\tilde{d}^2\|x_{i,k}-\hat{x}_{i,k}\|^2_p\le n\tilde{d}^2\max_{i\in[n]}\|x_{i,k}-\hat{x}_{i,k}\|^2_p.
\end{align}

Same as the way to get \eqref{nonconvex:vkLya3.2} and \eqref{nonconvex:vkLya3}, we have
\begin{subequations}
\begin{align}
U_{k}
&\ge\epsilon_{10}\Big(\|\bsx_{k}\|^2_{\bsK}
+\Big\|\bsv_k+\frac{1}{\beta}\bsg_k^0\Big\|^2_{\bsP}\Big)
+n(f(\bar{x}_k)-f^*)\label{nonconvex:vkLya3.2_determin}\\
&\ge\epsilon_{10}\hat{U}_k\ge0.\label{nonconvex:vkLya3_determin}
\end{align}
\end{subequations}

From \eqref{nonconvex:vkLya-sm2_determin}--\eqref{nonconvex:xminusxhat_quantization2} and \eqref{nonconvex:vkLya1.2_quantization}--\eqref{nonconvex:vkLya1_quantization}, we have
\begin{align}\label{nonconvex:vkLya-sm4_quantization}
\mathbf{E}_{\mathcal{C}}[U_{T+1}]&\le  U_{0}-\sum_{k=0}^{T}\mathbf{E}_{\mathcal{C}}[\|\bsx_k\|^2_{\eta(\epsilon_1-\eta\tilde{\epsilon}_2)\bsK}
+\frac{\eta}{4}\|\bar{\bsg}_{k}^0\|^2]+\frac{c_{8}}{1-\gamma^2}.
\end{align}

From \eqref{nonconvex:vkLya-sm4_quantization}, \eqref{nonconvex:vkLya1.1_quantization}, and \eqref{nonconvex:vkLya3_determin}, we have
\begin{align*}
\sum_{k=0}^{T}\mathbf{E}_{\mathcal{C}}[\|\bsx_k\|^2_{\bsK}+\|\bar{\bsg}_{k}^0\|^2]
\le\frac{U_0+\frac{c_{8}}{1-\gamma^2}}
{\min\{\eta(\epsilon_1-\eta\tilde{\epsilon}_2),~\frac{\eta}{4}\}},
\end{align*}
which yields \eqref{nonconvex:thm-sm-equ1_quantization}.

From \eqref{nonconvex:vkLya-sm4_quantization}, \eqref{nonconvex:vkLya1.1_quantization}, and \eqref{nonconvex:vkLya3.2_determin}, we have
\begin{align*}
\mathbf{E}_{\mathcal{C}}[n(f(\bar{x}_T)-f^*)]\le U_T\le U_0+\frac{c_{8}}{1-\gamma^2},
\end{align*}
which yields \eqref{nonconvex:thm-sm-equ2_quantization} holds.

\subsection{Proof of Theorem~\ref{nonconvex:thm-ft_quantization}}\label{nonconvex:proof-thm-ft_quantization}
In this proof, in addition to the notations used in the proofs of Theorems~\ref{nonconvex:thm-sm}--\ref{nonconvex:thm-ft} and \ref{nonconvex:thm-sm_quantization}, we also denote
\begin{align*}
\tilde{\epsilon}&\in(0,\min\{\tilde{\epsilon}_0,~1-\gamma^2\}),
~\tilde{\epsilon}_0=\frac{\tilde{\epsilon}_{12}}{\epsilon_{11}},\\
\tilde{\epsilon}_{12}&=\eta\min\Big\{\epsilon_1-\eta\tilde{\epsilon}_2,
~\epsilon_3-\eta\tilde{\epsilon}_4,~\frac{\nu}{2}\Big\}.
\end{align*}

\noindent {\bf (i)}  We first show that $\tilde{\epsilon}\in(0,1)$.

From \eqref{nonconvex:vkLya1.1_quantization}--\eqref{nonconvex:vkLya1_quantization} and $\gamma\in(0,1)$, we have
\begin{align}\label{nonconvex:vkLya1.4_quantization}
\tilde{\epsilon}_{12}>0~\text{and}~\min\{\tilde{\epsilon}_0,~1-\gamma^2\}>0.
\end{align}

Noting that $\epsilon_1<\alpha\rho_2(L)/2$, $\tilde{\epsilon}_2>3\alpha^2\rho^2(L)$, and $\epsilon_{11}\ge\frac{1}{2}+\frac{\alpha}{\beta}\ge\frac{3}{2}$, we have
\begin{align}\label{nonconvex:vkLya-epsilon_quantization}
\tilde{\epsilon}<\tilde{\epsilon}_0=\frac{\tilde{\epsilon}_{12}}{\epsilon_{11}}
\le\frac{\eta(\epsilon_1-\eta\tilde{\epsilon}_2)}{\epsilon_{11}}
\le\frac{\epsilon_1^2}{4\tilde{\epsilon}_2\epsilon_{11}}<1.
\end{align}

\noindent {\bf (ii)}  We then show that \eqref{nonconvex:thm-ft-equ1_quantization} holds.

From \eqref{nonconvex:lemma:inequality-arithmetic-equ}, we have
\begin{align}\label{nonconvex:vkLya3.1_determin}
U_k\le\epsilon_{11}\hat{U}_k.
\end{align}

From \eqref{nonconvex:vkLya-sm2_determin}--\eqref{nonconvex:xminusxhat_quantization2}, \eqref{nonconvex:gg3}, and \eqref{nonconvex:vkLya1.4_quantization}--\eqref{nonconvex:vkLya3.1_determin}, we have
\begin{align*}
\mathbf{E}_{\mathcal{C}}[U_{k+1}]&\le \mathbf{E}_{\mathcal{C}}[U_{k}-\tilde{\epsilon}_{12}\hat{U}_k]+c_{8}\gamma^{2k}\nonumber\\
&
\le (1-\tilde{\epsilon}_0)\mathbf{E}_{\mathcal{C}}[U_{k}]+c_{8}\gamma^{2k}\nonumber\\
&\le(1-\tilde{\epsilon}_0)^{k+1}U_{0}
+\sum_{t=0}^{k}c_{8}(1-\tilde{\epsilon}_0)^t\gamma^{2(k-t)},
\end{align*}
which yields \eqref{nonconvex:thm-ft-equ1_quantization}.

\subsection{Proof of Theorem~\ref{nonconvex:thm-ft_determin}}\label{nonconvex:proof-thm-ft_determin}

In this proof, in addition to the notations used in the proofs of Theorems~\ref{nonconvex:thm-sm}--\ref{nonconvex:thm-ft} and \ref{nonconvex:thm-sm_quantization}--\ref{nonconvex:thm-ft_quantization}, we also denote
\begin{align*}
\hat{\kappa}_3&=\min\Big\{\frac{\hat{\epsilon}_1}{2\hat{\epsilon}_2},~\frac{\epsilon_5}{\epsilon_6},~\sqrt{\kappa_{15}}\Big\},
~\gamma\in[\max\{\sqrt{\kappa_{11}},~\sqrt{\kappa_{12}}\},1)\\
s_0&\ge\max\{\sqrt{\kappa_8/\kappa_7},~\max_{i\in[n]}\|x_{i,0}\|\},
~\kappa_{7}>\frac{\kappa_{10}(1-\varphi)^2}{\kappa_9},\\
\kappa_8&=\frac{1}{2}\|\bm{x}_0 \|^2_{\bsK}+\frac{1}{2}\Big\|\bsv_0
+\frac{1}{\beta}\bsg_0^0\Big\|^2_{\frac{\alpha+\beta}{\beta}\bsP}+\bsx_0^\top\bsK\bsP\Big(\bm{v}_0+\frac{1}{\beta}\bsg_0^0\Big)
+\frac{1}{2\nu}\|\bar{\bsg}_0^0\|^2,\\
\kappa_9&=\min\Big\{\frac{\hat{\epsilon}_1}{2\epsilon_{11}},~\frac{\nu}{2\epsilon_{11}}\Big\},
~\kappa_{10}=\Big(\tilde{\epsilon}_8+\frac{c_7\hat{\epsilon}_1}{\hat{\epsilon}_2}\Big)n\tilde{d}^2,\\
\kappa_{11}&=1-\eta\kappa_9+\frac{\eta\kappa_{10}(1-\varphi)^2}{\kappa_7},\\
\kappa_{12}&=(1+\varphi+\eta^2\kappa_{13})(1-\varphi)^2
+\frac{\eta^2\kappa_{14}}{\epsilon_{10}}\kappa_7,\\
\kappa_{13}&=4(1+\varphi^{-1})\hat{d}^2n\tilde{d}^2\alpha^2\rho^2(L),\\
\kappa_{14}&=4(1+\varphi^{-1})\hat{d}^2(\alpha^2\rho^2(L)+L_f^2),\\
\kappa_{15}&=\frac{(\varphi+\varphi^2-\varphi^3)\epsilon_{10}}{\kappa_{13}(1-\varphi)^2\epsilon_{10}
+\kappa_{14}\kappa_7},\\
\hat{\epsilon}_0&=\frac{\hat{\epsilon}_{12}}{\epsilon_{11}},~
\hat{\epsilon}_1=\min\{\epsilon_1,~\epsilon_3\},
~\hat{\epsilon}_2=\max\{\tilde{\epsilon}_2,~\tilde{\epsilon}_4\},\\
\hat{\epsilon}_{12}&=\eta\min\Big\{\hat{\epsilon}_1-\eta\hat{\epsilon}_2,~\frac{\nu}{2}\Big\},
~c_{9}=\eta(\tilde{\epsilon}_8+2c_7\eta)n\tilde{d}^2.
\end{align*}

\noindent {\bf (i)} We first show that $\kappa_{11},~\kappa_{12}\in(0,1)$.

From \eqref{nonconvex:beta1}--\eqref{nonconvex:beta3} and $0<\eta\le\hat{\kappa}_3\le\frac{\hat{\epsilon}_1}{2\hat{\epsilon}_2}$,  we have
\begin{subequations}\label{nonconvex:vkLya1.4_determin}
\begin{align}
&\hat{\epsilon}_{12}\ge\eta\min\Big\{\frac{\hat{\epsilon}_1}{2},
~\frac{\nu}{2}\Big\}>0,\label{nonconvex:vkLya1.4_determin1}\\
&\hat{\epsilon}_0=\frac{\hat{\epsilon}_{12}}{\epsilon_{11}}\ge\eta\kappa_9>0,
\label{nonconvex:vkLya1.4_determin2}\\
&0<c_9\le\kappa_{10}.\label{nonconvex:vkLya1.4_determin3}
\end{align}
\end{subequations}
From \eqref{nonconvex:vkLya-epsilon_quantization}, we have
\begin{align}\label{nonconvex:vkLya-epsilon_determin}
\eta\kappa_9\le\hat{\epsilon}_0\le\tilde{\epsilon}_0<1.
\end{align}

From \eqref{nonconvex:vkLya1.4_determin2} and $\kappa_{7}>\frac{\kappa_{10}(1-\varphi)^2}{\kappa_9}$, we have
\begin{align}\label{nonconvex:kappa11upp}
\kappa_{7}>0~\text{and}~\kappa_{11}=1-\eta\kappa_9+\frac{\eta\kappa_{10}(1-\varphi)^2}{\kappa_7}<1.
\end{align}
From \eqref{nonconvex:vkLya1.4_determin3}, \eqref{nonconvex:vkLya-epsilon_determin}, and $\kappa_{7}>0$, we have
\begin{align}\label{nonconvex:kappa11low}
\kappa_{11}>0.
\end{align}

From $\kappa_{7}>0$, $\varphi\in(0,1)$, and $\eta<\sqrt{\kappa_{15}}$, we have
\begin{align}\label{nonconvex:kappa12uppandlow}
0<\kappa_{12}&=(1+\varphi+\eta^2\kappa_{13})(1-\varphi)^2
+\frac{\eta^2\kappa_{14}\kappa_7}{\epsilon_{10}}\nonumber\\
&=1-(\varphi+\varphi^2-\varphi^3)+\frac{\eta^2(\kappa_{13}(1-\varphi)^2\epsilon_{10}
+\kappa_{14}\kappa_7)}{\epsilon_{10}}\nonumber\\
&<1.
\end{align}

\noindent {\bf (ii)} We next show that \eqref{nonconvex:thm-ft-equ1_determin} holds.

From \eqref{nonconvex:vkLya-sm2_determin}, \eqref{nonconvex:xminusxhat_determin}, \eqref{nonconvex:gg3},  \eqref{nonconvex:vkLya3.1_determin}, and \eqref{nonconvex:vkLya1.4_determin},  we have
\begin{align}\label{nonconvex:vkLya4_determin}
U_{k+1}&\le U_{k}-\hat{\epsilon}_{12}\hat{U}_k+c_{9}\max_{i\in[n]}\|x_{i,k}-\hat{x}_{i,k}\|^2_p\nonumber\\
&\le (1-\hat{\epsilon}_0)U_{k}+c_{9}\max_{i\in[n]}\|x_{i,k}-\hat{x}_{i,k}\|^2_p\nonumber\\
&\le (1-\eta\kappa_9)U_{k}+\eta\kappa_{10}\max_{i\in[n]}\|x_{i,k}-\hat{x}_{i,k}\|^2_p.
\end{align}

We have
\begin{align}\label{nonconvex:xminush_compress_determin}
\|x_{i,k+1}-\hat{x}_{i,k}\|^2_p&=\|x_{i,k+1}-x_{i,k}+x_{i,k}-\hat{x}_{i,k}\|^2_p\nonumber\\
&\le(\|x_{i,k+1}-x_{i,k}\|_p+\|x_{i,k}-\hat{x}_{i,k}\|_p)^2\nonumber\\
&\le(1+\varphi^{-1})\|x_{i,k+1}-x_{i,k}\|_p^2+(1+\varphi)\|x_{i,k}-\hat{x}_{i,k}\|_p^2\nonumber\\
&\le(1+\varphi^{-1})\hat{d}^2\|x_{i,k+1}-x_{i,k}\|^2+(1+\varphi)\|x_{i,k}-\hat{x}_{i,k}\|_p^2\nonumber\\
&\le(1+\varphi^{-1})\hat{d}^2\|\bsx_{k+1}-\bsx_{k}\|^2+(1+\varphi)\|x_{i,k}-\hat{x}_{i,k}\|_p^2,
\end{align}
where the first inequality holds due to the Minkowski inequality; the second inequality holds due to \eqref{nonconvex:lemma:inequality-arithmetic-equ} and $\varphi>0$; the third inequality holds due to Lemma~\ref{nonconvex:lemma:pnorm}.

We have
\begin{align}\label{nonconvex:xminush_compress_determin2}
\max_{i\in[n]}\|x_{i,k+1}-\hat{x}_{i,k}\|^2_p
&\le(1+\varphi+\eta^2\kappa_{13})
\max_{i\in[n]}\|x_{i,k}-\hat{x}_{i,k}\|_p^2+\eta^2\kappa_{14}\Big(\|\bsx_k\|^2_{\bsK}
+\Big\|\bm{v}_k+\frac{1}{\beta}\bsg_k^0\Big\|^2_{\bsP}\Big)\nonumber\\
&\le(1+\varphi+\eta^2\kappa_{13})
\max_{i\in[n]}\|x_{i,k}-\hat{x}_{i,k}\|_p^2+\frac{\eta^2\kappa_{14}}{\epsilon_{10}}U_{k},
\end{align}
where the first inequality holds due to \eqref{nonconvex:xkoneminusx}, \eqref{nonconvex:xminusxhat_determin}, and \eqref{nonconvex:xminush_compress_determin}; and the second inequality holds due to \eqref{nonconvex:vkLya3.2_determin}.

In the following, we use mathematical induction to prove
\begin{align}\label{nonconvex:Ukandx}
U_k\le \kappa_7s^2_k~\text{and}~\max_{i\in[n]}\|x_{i,k}-\hat{x}_{i,k-1}\|_p^2\le s_k^2.
\end{align}

From \eqref{nonconvex:gg3} and $s_0\ge\sqrt{\kappa_8/\kappa_7}$, we have
\begin{align}\label{nonconvex:UkandxU0}
U_0\le\kappa_8\le \kappa_7s_0^2.
\end{align}
From $s_0\ge \max_{i\in[n]}\|x_{i,0}\|$ and $\hat{x}_{i,-1}={\bf 0}_d$, we have
\begin{align}\label{nonconvex:Ukandxx0}
\max_{i\in[n]}\|x_{i,0}-\hat{x}_{i,-1}\|_p^2\le s_0^2.
\end{align}

Therefore, from \eqref{nonconvex:UkandxU0} and \eqref{nonconvex:Ukandxx0}, we know that \eqref{nonconvex:Ukandx} holds at $k=0$. Suppose that \eqref{nonconvex:Ukandx} holds at $k$. We next show that \eqref{nonconvex:Ukandx} holds at $k+1$.

We have
\begin{align}\label{nonconvex:xminusxhat_determin2}
\|x_{i,k}-\hat{x}_{i,k}\|_p
&=\|x_{i,k}-\hat{x}_{i,k-1}-s_k\mathcal{C}((x_{i,k}-\hat{x}_{i,k-1})/s_k)\|_p\nonumber\\
&=s_k\|(x_{i,k}-\hat{x}_{i,k-1})/s_k-\mathcal{C}((x_{i,k}-\hat{x}_{i,k-1})/s_k)\|_p\nonumber\\
&\le (1-\varphi)s_k,
\end{align}
where the first equality holds due to \eqref{nonconvex:kia-algo-dc-xhat_determin} and \eqref{nonconvex:kia-algo-dc-q_determin}; and the inequality holds due to \eqref{nonconvex:Ukandx} and \eqref{nonconvex:ass:compression_equ_determin}.

We have
\begin{align}\label{nonconvex:UkandxU1}
U_{k+1}&\le (1-\eta\kappa_9)\kappa_7s^2_k+\eta\kappa_{10}(1-\varphi)^2s_k^2
=\frac{\kappa_{11}}{\gamma^2}\kappa_7s^2_{k+1}\le\kappa_7s^2_{k+1},
\end{align}
where the first inequality holds due to \eqref{nonconvex:vkLya4_determin}, \eqref{nonconvex:Ukandx}, and \eqref{nonconvex:xminusxhat_determin2}; and the last inequality holds due to $\gamma\ge\sqrt{\kappa_{11}}$.

We have
\begin{align}\label{nonconvex:Ukandxx1}
\max_{i\in[n]}\|x_{i,k+1}-\hat{x}_{i,k}\|^2_p
&\le(1+\varphi+\eta^2\kappa_{13})(1-\varphi)^2s_k^2+\frac{\eta^2\kappa_{14}}{\epsilon_{10}}\kappa_7s^2_k
=\frac{\kappa_{12}}{\gamma^2}s^2_{k+1}\le s^2_{k+1},
\end{align}
where the first inequality holds due to \eqref{nonconvex:xminush_compress_determin2}, \eqref{nonconvex:Ukandx}, and \eqref{nonconvex:xminusxhat_determin2}; and the last inequality holds due to $\gamma\ge\sqrt{\kappa_{12}}$.

Therefore, from \eqref{nonconvex:UkandxU1} and \eqref{nonconvex:Ukandxx1}, we know that \eqref{nonconvex:Ukandx} holds at $k+1$. Finally, by mathematical induction, we know that \eqref{nonconvex:Ukandx} holds for any $k\in\mathbb{N}_0$. Hence, \eqref{nonconvex:thm-ft-equ1_determin} holds.

\bibliographystyle{IEEEtran}
\bibliography{refextra}








\end{document}